\newtheorem{theorem}{Theorem}
\newtheorem{assumption}{Assumption}
\newtheorem{corollary}{Corollary}
\newtheorem{definition}{Definition}
\newtheorem{example}{Example}
\newtheorem{lemma}{Lemma}
\newtheorem{remark}[theorem]{Remark}
\newcommand{\mcf}{\mathcal{F}}
\newcommand{\mbf}{\mathbb{F}}
\newcommand{\mcg}{\mathcal{G}}
\newcommand{\mbg}{\mathbb{G}}
\newcommand{\mch}{\mathcal{H}}
\newcommand{\mbh}{\mathbb{H}}
\newcommand{\gs}{\sigma}
\newcommand{\var}{\text{Var}}
\newcommand{\gep}{\varepsilon}
\newcommand{\law}{\stackrel{\mathcal{L}}{=}}
\newcommand{\bee}{\begin{equation}}
\newcommand{\eee}{\end{equation}}
\newcommand{\ga}{\alpha}
\newenvironment{proof}[1][Proof]{\textbf{#1.} }{\ \rule{0.5em}{0.5em}}
\begin{document}

\title{On Progressive Filtration Expansions with a Process; Applications to Insider Trading}
\author{Younes Kchia\thanks{%
Goldman Sachs, Hong Kong; younes.kchia@polytechnique.org} \and %
Philip Protter\thanks{%
Statistics Department, Columbia University, New York, NY, 10027; pep2117@columbia.edu} 
\thanks{%
Supported in part by NSF grant DMS-1308483}} 
\date{\today}
\maketitle

\begin{abstract}
In this paper we study progressive filtration expansions with c\`adl\`ag processes. Using results from the theory of the weak convergence of $\sigma$-fields, we first establish a semimartingale convergence theorem. Then we apply it in a filtration expansion with a process setting and provide sufficient conditions for a semimartingale of the base filtration to remain a semimartingale in the expanded filtration. Applications to the expansion of a Brownian filtration are given. The paper concludes with applications to models of insider trading in financial mathematics.
\end{abstract}


\section{Introduction}

One of the key insights of K. It\^o when he developed the It\^o integral was to restrict the space of integrands to what we now call predictable processes.  This allowed the integral to have a type of bounded convergence theorem that N. Wiener was unable to obtain with unrestricted random integrands. The It\^o integral has since been extended to general semimartingales.  If one tries however to expand (i.e. to enlarge) the filtration, then one is playing with fire, and one may lose the key properties It\^o originally obtained with his restriction to predictable processes.  In the 1980's a theory of such filtration expansions  was nevertheless successfully developed for two types of expansion:  initial expansions and progressive expansions; see for instance \cite{Jacod:1987} and \cite{Jeulin/Yor:1985}, or the more recent partial exposition in~\cite[Chapter VI]{Protter:2005}. The initial expansion of a filtration $\mathbb F = (\mathcal F_t)_{t\geq 0}$ with a random variable $\tau$ is the filtration $\mathbb H$ obtained as the right-continuous modification of $(\mathcal F_t \vee \sigma (\tau))_{t\geq 0}$. The progressive expansion $\mathbb G$ is obtained as any right-continuous filtration containing $\mathbb F$ and making $\tau$ a stopping time. When referring to the progressive expansion with a random variable in this paper, we mean the smallest such filtration. One is usually interested in the cases where $\mathbb F$~semimartingales remain semimartingales in the expanded filtrations and in their decompositions when viewed as semimartingales in the expanded filtrations.  The theory of the expansion of filtrations has proved useful and of continuing interest in abstract probability theory (see for example the papers~\cite{AKH,Baudoin2,KN,ProtterFollmer,FWY,Jeanblanc/LeCam:2009,KLP,MansuyYor,NN,NY} and especially the recent thesis of A. Aksamit~\cite{AA}). 

The subject has regained interest recently, due to applications in Mathematical Finance. The work of A. Kyle~\cite{Kyle} in 1985 and of K. Back~\cite{Back1},\cite{Back2} in the early 1990's laid the foundation for a theory of the modeling of insider trading via a filtration of expansions approach. More recent work in the area includes the papers~\cite{AI,Baudoin1,Baudoin3,Baudoin4,BO,CV,CJN,FJS,GP,Imkeller,IPW,Wu,JZ}.

In this article we go beyond the simple cases of initial expansion and progressive expansion with a random variable.  Instead we consider the (more complicated) case of expansion of a filtration through dynamic enlargement, by adding a stochastic processes as it evolves simultaneously to the evolution of the original process.  In order to do this, we begin with simple cases where we add marked point processes, and then we use the theory of the convergence of $\sigma$ fields recently developed by Antonelli, Coquet, Kohatsu-Higa, Mackevicius, M\'emin, and Slominski (see~\cite{AKH},\cite{Coquet},\cite{Coquet0}) to obtain more sophisticated enlargement possibilities.  We combine the convergence results with an extension of an old result of Barlow and Protter~\cite{BarlowProtter}, finally obtaining the key results, which include the forms of the semimartingale decompositions in the enlarged filtrations.  We then apply these results to an example where we enlarge the filtration with another process which is evolving backwards in time.  To do this we need to use density estimates inspired by the work of Bally and Talay~\cite{BallyTalay}. Finally we conclude in a section where we develop some applications of our results to the financial theory of the modeling of insider trading. Here we build upon much preliminary work already done in the area.

The techniques developed in this paper require a long preliminary treatment of the convergence of $\sigma$ fields, and to a lesser extent the convergence of filtrations.  This delays the key theorems such that they occur rather late in the paper, so perhaps it is wise to indicate that the main results of interest (in the authors' opinion) are Theorems~\ref{T:exp} and~\ref{T:semimgX}, which show how one can expand filtrations with processes and have semimartingales remain semimartingales in the enlarged filtrations.  The authors also wish to mention here that the example provided in Theorem~\ref{T:diff} shows how the hypotheses  (perhaps a bit strange at first glance) of Theorem~\ref{T:semimgX}  can arise naturally in applications, and it shows the potential utility of the results of this paper.   That said, the preliminary results on the weak convergence of $\sigma$ fields has an interest in their own right.  

At the suggestion of a referee of a previous version of this paper, we have added a section (Section~\ref{section5}) where we apply our results to models of insider trading. In this section we also develop a concrete example of insider trading via high frequency trading. This builds on an already existing theory, developed by a variety of researchers, and relevant papers include~\cite{AFK,AI,Back1,Back2,Baudoin3,BO,CV,EJ,GP,Imkeller,IPW,KH,Kyle,Wu,JZ}, which is by no means an exhaustive list. With the plethora of recent scandals, such an addition seems timely.

\subsection{Previous Results}

For the initial expansion $\mathbb H$ of a filtration $\mathbb F = (\mathcal F_t)_{t\geq 0}$ with a random variable $\tau$, 
one well-known situation where $\mathbb F$ semimartingales remain semimartingales in the expanded filtration is when \textit{Jacod's criterion} is satisfied (see \cite{Jacod:1987} or alternatively~\cite[Theorem 10, p. 371]{Protter:2005}), and as far as one is concerned by the progressive expansion, filtration~$\mathbb G$, this always holds up to the random time $\tau$ as proved by Jeulin and Yor and holds on all $[0,\infty)$ for honest times (see \cite{Jeulin/Yor:1985}). In both \cite{KLP} and \cite{Jeanblanc/LeCam:2009}, this is proved to hold also for random times satisfying \textit{Jacod's criterion}. In \cite{KLP}, the authors link the two previous types of expansions and are able to provide similar results for more general types of expansion of filtrations. They extend for instance these results to the multiple time case, without any restrictions on the ordering of the individual times and more importantly to the filtration expanded by a counting process $N_t^n = \sum_{i=1}^n X_i 1_{\{\tau_i \leq t\}}$, i.e.~the smallest right-continuous filtration containing $\mathbb F$ and to which the process $N^n$ is adapted. 

For a given filtration $\mathbb F$ and a given c\`adl\`ag process $X$, the smallest right-continuous filtration containing $\mathbb F$ and to which $X$ is adapted will be called the progressive expansion of $\mathbb F$ with $X$. In this paper we pursue the analysis started in \cite{KLP} and investigate the stability of the semimartingale property of $\mathbb F$~semimartingales in progressive expansions of $\mathbb F$ with c\`adl\`ag processes $X$. We apply the results in \cite{KLP} together with results from the theory of weak convergence of $\sigma$-fields (see \cite{Coquet} and \cite{Coquet0}) to obtain a general criterion that guarantees this property, at least for $\mathbb F$~semimartingales satisfying suitable integrability assumptions. Hoover~\cite{Hoover}, following remarks by M. Barlow and S. Jacka, introduced the weak convergence of $\sigma$-fields and of filtrations in 1991.  The next big step was in 2000 with the seminal paper of Antonelli and Kohatsu-Higa\cite{AKH}.  This was quickly followed by the work of Coquet, M\'emin and Mackevicius \cite{Coquet0} and by Coquet, M\'emin and Slominsky \cite{Coquet}. We will recall fundamental results on the topic but we refer the interested reader to \cite{Coquet} and \cite{Coquet0} for details. In these papers, all filtrations are indexed by a compact time interval $[0,T]$. We work within the same framework and assume that a probability space $(\Omega, \mathcal H, P)$ and a positive integer $T$ are given. All filtrations considered in this paper are assumed to be completed by the $P$-null sets of $\mathcal H$. By the natural filtration of a process $X$, we mean the right-continuous filtration associated to the natural filtration of $X$. The concepts of weak convergence of $\sigma$-fields and of filtrations rely on the topology imposed on the space of c\`adl\`ag processes and we use the Skorohod $J_1$ topology as it is done in~\cite{Coquet}. 

\subsection{Outline}

An outline of this paper is the following. In section~\ref{S:1}, we recall basic facts on the weak convergence of $\sigma$-fields and establish fundamental lemmas for subsequent use. The last subsection provides a sufficient condition for the semimartingale property to hold for a given c\`adl\`ag adapted process based on the weak convergence of $\sigma$-fields. The sufficient condition we provide at this point is unlikely to hold in a filtration expansion context, however the proof of this result underlines what can go wrong under the more natural assumptions considered in the next section.

Section~\ref{section3} extends the main theorem in \cite{BarlowProtter} and proves a general result on the convergence of $\mathbb G^n$~special semimartingales to a $\mathbb G$~adapted process $X$, where $(\mathbb G^n)_{n\geq 1}$ and $\mathbb G$ are filtrations such that $\mathcal G^n_t$ converges weakly to $\mathcal G_t$ for each $t\geq 0$. The process $X$ is proved to be a $\mathbb G$~special semimartingale under sufficient conditions on the regularity of the local martingale and finite variation parts of the $\mathbb G^n$~semimartingales. This is then applied to the case where the filtrations $\mathbb G^n$ are obtained by progressively expanding a base filtration $\mathbb F$ with processes $N^n$ converging in probability to some process $N$. We provide sufficient conditions for an $\mathbb F$~semimartingale to remain a $\mathbb G$~semimartingale, where $\mathbb G$ is the progressive expansion of $\mathbb F$ with $N$. Section~\ref{3bis} contains a useful little theorem (Theorem~\ref{T:5}) concerning a dynamic process expansion obtained through the use of a sequence of sequences of honest times.

Section~\ref{section4} applies the results obtained in Section~\ref{section3} to the case where the base filtration $\mathbb F$ is progressively expanded by a c\`adl\`ag process whose increments satisfy a \textit{generalized Jacod's criterion} with respect to the filtration $\mathbb F$ along \textit{some} sequence of subdivisions whose mesh tends to zero. An application to the expansion of a Brownian filtration with a time reversed diffusion is given through a detailed study, and the canonical decomposition of the Brownian motion in the expanded filtration is provided. Finally, we apply our results to models of insider trading and provide several concrete examples in Section~\ref{section5}.

\subsection*{Acknowledgements:} The authors wish to thank Monique Jeanblanc for pointing out the seminal work of  Imkeller~\cite{Imkeller} and Zwierz~\cite{JZ}. They are also grateful to Jean Jacod, Umut \c{C}etin, and  Nizar Touzi for valuable help and advice. The first author wants to thank the hospitality of both Cornell University and Columbia University. The second author wishes to thank INRIA at Sophia-Antipolis and also the Courant Institute of NYU for their hospitality during a Sabbatical leave from Columbia University. 

\section{Weak convergence of $\sigma$-fields and filtrations} \label{S:1}

\subsection{Definitions and fundamental results}

Let $\mathbb{D}$ be the space of c\`adl\`ag\footnote{French acronym for right-continuous with left limits} functions from $[0,T]$ into $\mathbb{R}$. Let $\Lambda$ be the set of time changes from $[0,T]$ into $[0,T]$, i.e.~the set of all continuous strictly increasing functions $\lambda : [0,T]\rightarrow [0,T]$ such that $\lambda(0)=0$ and $\lambda(T)=T$. We define the Skorohod distance as follows
$$
d_S(x,y)=\inf_{\lambda \in \Lambda} \big\{ ||\lambda - Id||_{\infty} \vee ||x-y\circ\lambda||_{\infty}\big\}
$$
for each $x$ and $y$ in $\mathbb{D}$. Let $(X^n)_{n\geq 1}$ and $X$ be c\`adl\`ag processes (i.e.~whose paths are in~$\mathbb{D}$), indexed by $[0,T]$ and defined on $(\Omega, \mathcal H, P)$. We will write $X^n \stackrel{P}{\rightarrow} X$ when $(X^n)_{n\geq 1}$ converges in probability under the Skorohod $J_1$ topology to $X$ i.e.~when the sequence of random variables $(d_S(X^n,X))_{n\geq 1}$ converges in probability to zero. We can now introduce the concepts of weak convergence of $\sigma$-fields and of filtrations.

\begin{definition}\label{D:sigmafield}
A sequence of $\sigma$-fields $\mathcal{A}^n$ converges weakly to a $\sigma$-field $\mathcal{A}$ if and only if for all $B\in\mathcal{A}$, $E(1_B\mid\mathcal{A}^n)$ converges in probability to $1_B$. We write $\mathcal{A}^n\stackrel{w}{\rightarrow}\mathcal{A}$.
\end{definition}

\begin{definition}\label{D:filtration}
A sequence of right-continuous filtrations $\mathbb F^n$ converges weakly to a filtration $\mathbb F$ if and only if for all $B\in\mathcal F_T$, the sequence of c\`adl\`ag martingales $E(1_B\mid\mathcal F^n_{.})$ converges in probability under the Skorohod $J_1$ topology on $\mathbb{D}$ to the martingale $E(1_B\mid\mathcal F_{.})$. We write $\mathbb F^n\stackrel{w}{\rightarrow}\mathbb F$.
\end{definition}

The following lemmas provide characterizations of the weak convergence of $\sigma$-fields and filtrations. We refer to \cite{Coquet} for the proofs.

\begin{lemma}\label{L:sigma}
A sequence of $\sigma$-fields $\mathcal A^n$ converges weakly to a $\sigma$-field $\mathcal A$ if and only if $E(Z\mid\mathcal A^n)$ converges in probability to $Z$ for any integrable and $\mathcal A$~measurable random variable $Z$.
\end{lemma}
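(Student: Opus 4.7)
The ``if'' direction is immediate: take $Z = 1_B$ for $B \in \mathcal{A}$; this is integrable and $\mathcal{A}$-measurable, so the hypothesis gives exactly Definition~\ref{D:sigmafield}. The substance is the ``only if'' direction, and my plan is to use the standard measure-theoretic extension machine: bootstrap from indicators to simple $\mathcal{A}$-measurable random variables, then to bounded $\mathcal{A}$-measurable random variables, then to arbitrary integrable $\mathcal{A}$-measurable random variables.

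First, for a simple $\mathcal{A}$-measurable $Z = \sum_{i=1}^m c_i 1_{B_i}$ with $B_i \in \mathcal{A}$, linearity of conditional expectation gives $E(Z \mid \mathcal{A}^n) = \sum_{i=1}^m c_i E(1_{B_i} \mid \mathcal{A}^n)$, which converges in probability to $\sum_{i=1}^m c_i 1_{B_i} = Z$ by Definition~\ref{D:sigmafield} applied to each $B_i$ (finite sums of sequences convergent in probability converge in probability).

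The approximation step is the same at both higher levels and is the only estimate worth highlighting. Given an integrable $\mathcal{A}$-measurable $Z$, choose simple $\mathcal{A}$-measurable $Z_k$ with $\|Z - Z_k\|_{L^1} \to 0$ (for bounded $Z$ this follows from standard approximation; for general integrable $Z$, first truncate $Z$ to $Z 1_{\{|Z| \leq k\}}$ and then apply the bounded case). Writing
\[
|E(Z \mid \mathcal{A}^n) - Z| \leq |E(Z - Z_k \mid \mathcal{A}^n)| + |E(Z_k \mid \mathcal{A}^n) - Z_k| + |Z_k - Z|,
\]
and using the conditional $L^1$ contraction $E|E(Z - Z_k \mid \mathcal{A}^n)| \leq E|Z - Z_k|$, Markov's inequality gives
\[
P\bigl(|E(Z - Z_k \mid \mathcal{A}^n)| > \eta\bigr) \leq \eta^{-1} \|Z - Z_k\|_{L^1}
\]
uniformly in $n$, and similarly for the third term. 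Fix $\varepsilon, \eta > 0$, pick $k$ so that the first and third terms each have probability at most $\varepsilon/3$ of exceeding $\eta/3$, then use the simple-function case to send $n \to \infty$ and control the middle term.

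There is no real obstacle here; this is a routine ``monotone class''-style bootstrap. The only point to be careful about is that the approximants $Z_k$ must themselves be $\mathcal{A}$-measurable, so that the simple-function case of step one actually applies to them; this is automatic because both truncation and simple-function approximation preserve measurability with respect to $\mathcal{A}$.
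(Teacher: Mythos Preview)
Your proof is correct; the bootstrap from indicators to simple functions to integrable $\mathcal{A}$-measurable random variables via the $L^1$ contraction of conditional expectation and Markov's inequality is the standard and natural route. Note that the paper does not supply its own proof of this lemma but simply refers to \cite{Coquet}, so there is no in-paper argument to compare against; your write-up is exactly the kind of proof one expects to find behind that citation.
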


\begin{lemma}
A sequence of filtrations $\mathbb F^n$ converges weakly to a filtration $\mathbb F$ if and only if $E(Z\mid\mathcal F^n_{.})$ converges in probability under the Skorohod $J_1$ topology to $E(Z\mid\mathcal F_{.})$, for any integrable, $\mathcal F_T$~measurable random variable $Z$.
\end{lemma}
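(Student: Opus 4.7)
The $(\Leftarrow)$ direction is immediate: specializing to $Z = 1_B$ for $B \in \mathcal F_T$ recovers Definition~\ref{D:filtration}. The content is the forward direction, where I must propagate convergence of conditional expectations from indicator variables to arbitrary integrable $\mathcal F_T$-measurable $Z$. My plan is a two-step approximation that exploits the strong ``uniformity in $n$'' afforded by Doob's martingale maximal inequalities, combined with the elementary fact $d_S(x,y)\leq \|x-y\|_\infty$ (take $\lambda=\mathrm{Id}$ in the definition of $d_S$).

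First I would reduce to bounded $Z$. For integrable $\mathcal F_T$-measurable $Z$, let $Z^M$ be its truncation at $\pm M$; then $\|Z-Z^M\|_{L^1}\to 0$ as $M\to\infty$. Doob's $L^1$ weak-type inequality applied to the c\`adl\`ag martingale $t\mapsto E(Z-Z^M\mid \mathcal F^n_t)$ yields, for every $\epsilon>0$,
\[
P\!\left(\sup_{t\in[0,T]}\bigl|E(Z-Z^M\mid\mathcal F^n_t)\bigr|\geq \epsilon\right)\leq \frac{\|Z-Z^M\|_{L^1}}{\epsilon},
\]
and similarly for the martingale with respect to $\mathbb F$. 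Since the right-hand side is independent of $n$, taking $M$ large makes the uniform-norm distance between the martingales associated to $Z$ and to $Z^M$ arbitrarily small in probability, uniformly in $n$. Via the triangle inequality for $d_S$ combined with $d_S\leq \|\cdot\|_\infty$, it suffices to prove the claim for bounded $Z$.

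Next I would pass from indicators to bounded $Z$. By linearity the claim extends from indicators of $\mathcal F_T$-measurable sets to simple $\mathcal F_T$-measurable functions. For bounded $Z$, choose simple $Z_k$ with $|Z_k|\leq \|Z\|_\infty$ and $Z_k\to Z$ in $L^2$. By Doob's $L^2$ maximal inequality,
\[
E\!\left[\sup_{t\in[0,T]}\bigl|E(Z-Z_k\mid\mathcal F^n_t)\bigr|^2\right]\leq 4\,\|Z-Z_k\|_{L^2}^2,
\]
uniformly in $n$. The triangle inequality
\[
d_S\!\bigl(E(Z|\mathcal F^n_\cdot),E(Z|\mathcal F_\cdot)\bigr)\leq \|E(Z{-}Z_k|\mathcal F^n_\cdot)\|_\infty+d_S\!\bigl(E(Z_k|\mathcal F^n_\cdot),E(Z_k|\mathcal F_\cdot)\bigr)+\|E(Z{-}Z_k|\mathcal F_\cdot)\|_\infty
\]
then closes the argument: the first and third terms are controlled uniformly in $n$ by choosing $k$ large, and for that fixed $k$ the middle term tends to zero in probability in $J_1$ by the simple-function case.

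The step I expect to be most delicate is precisely the use of Doob's maximal inequalities to obtain uniform-in-$n$ control, because addition is not continuous in the $J_1$ topology and one cannot naively split $Z=Z_k+(Z-Z_k)$ at the level of Skorohod convergence. The escape route is that the remainder terms are controlled in the stronger uniform norm (via Doob), which dominates $d_S$, so only the middle piece needs to be handled in $J_1$, where the hypothesis directly applies.
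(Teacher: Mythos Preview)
The paper does not give its own proof of this lemma; it simply refers the reader to \cite{Coquet}. So there is no in-paper argument to compare against, and your attempt stands on its own merits.

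Your overall architecture is the right one, and the two approximation steps are handled correctly: Doob's weak-type $L^1$ inequality gives uniform-in-$n$ control of $\sup_t|E(Z-Z^M\mid\mathcal F^n_t)|$ to reduce to bounded $Z$, and Doob's $L^2$ inequality does the same for the passage from simple to bounded. Your verified inequality
\[
d_S(a,c)\;\leq\;\|a-b\|_\infty + d_S(b,b') + \|b'-c\|_\infty
\]
is exactly the device that lets you combine a $J_1$-convergent middle piece with sup-norm-small flanking pieces, circumventing the failure of addition in $J_1$.

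There is, however, a genuine gap at the sentence ``By linearity the claim extends from indicators of $\mathcal F_T$-measurable sets to simple $\mathcal F_T$-measurable functions.'' You yourself flag that addition is not $J_1$-continuous, and your sup-norm trick rescues the two \emph{approximation} steps precisely because the remainder there is small in $\|\cdot\|_\infty$. But at the linearity step the ``remainder'' is a term like $c_2\,E(1_{B_2}\mid\mathcal F^n_\cdot)$, which is not small in any norm; knowing that each $E(1_{B_i}\mid\mathcal F^n_\cdot)\to E(1_{B_i}\mid\mathcal F_\cdot)$ in $J_1$ separately does not yield convergence of the finite sum. What is needed is \emph{joint} convergence of the vector $(E(1_{B_1}\mid\mathcal F^n_\cdot),\ldots,E(1_{B_m}\mid\mathcal F^n_\cdot))$ in $D([0,T],\mathbb R^m)$, so that a single time change works simultaneously for all components; then continuity of the map $(x_1,\ldots,x_m)\mapsto\sum c_i x_i$ on $\mathbb R^m$ finishes the job. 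This joint convergence is plausible (all components are martingales for the \emph{same} filtration $\mathbb F^n$, so their jump times are governed by the same ``clock''), but it does not follow from the coordinate-wise hypothesis without further argument, and that argument is exactly the substance of the result in \cite{Coquet} to which the paper defers.
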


The weak convergence of the $\sigma$-fields $\mathcal F^n_t$ to $\mathcal F_t$ for all $t$ does not imply the weak convergence of the filtrations $\mathbb F^n$ to $\mathbb F$. The reverse implication does not hold neither.

Coquet, M\'emin and Slominsky provide a characterization of weak convergence of filtrations when the limiting filtration is the natural filtration of some c\`adl\`ag process $X$, see Lemma 3 in \cite{Coquet}. We provide a similar result for weak convergence of $\sigma$-fields when the limiting $\sigma$-field is generated by some c\`adl\`ag process $X$.

\begin{lemma}\label{L:carac}
Let $X$ be a c\`adl\`ag process. Define $\mathcal A=\sigma(X_t, 0\leq t\leq T)$ and let $(\mathcal A^n)_{n\geq 1}$ be a sequence of $\sigma$-fields. Then $\mathcal A^n\stackrel{w}{\rightarrow}\mathcal A$ if and only if
$$
E(f(X_{t_1}, \ldots, X_{t_k})\mid\mathcal A^n)\stackrel{P}{\rightarrow}f(X_{t_1}, \ldots, X_{t_k})
$$
for all $k\in \mathbb N$, $t_1, \ldots, t_k$ points of a dense subset $\mathcal D$ of $[0,T]$ containing $T$ and for any continuous and bounded function $f : \mathbb R^k\rightarrow \mathbb R$.
\end{lemma}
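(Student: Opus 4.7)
The forward implication is immediate from Lemma~\ref{L:sigma}, since each $f(X_{t_1},\ldots,X_{t_k})$ is bounded (hence integrable) and $\mathcal{A}$-measurable; so $\mathcal{A}^n \stackrel{w}{\to} \mathcal{A}$ yields the stated convergence in probability at no cost. All the work is in the converse.

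For the converse, my plan is to reduce matters to Definition~\ref{D:sigmafield} by showing that $E(Z \mid \mathcal{A}^n) \stackrel{P}{\to} Z$ for every bounded $\mathcal{A}$-measurable $Z$, via a functional monotone class argument. Let
$$
\mathcal{H} = \{Z \in L^\infty(\Omega, \mathcal{A}, P) : E(Z \mid \mathcal{A}^n) \stackrel{P}{\to} Z\}.
$$
Then $\mathcal{H}$ is clearly a vector space containing the constants. I would next check that $\mathcal{H}$ is stable under bounded monotone convergence: if $Z_k \in \mathcal{H}$ with $0 \le Z_k \uparrow Z \le M$, the decomposition
$$
|E(Z \mid \mathcal{A}^n) - Z| \le E(Z - Z_k \mid \mathcal{A}^n) + |E(Z_k \mid \mathcal{A}^n) - Z_k| + (Z - Z_k),
$$
combined with the contraction estimate $E\,E(Z - Z_k \mid \mathcal{A}^n) = E(Z - Z_k)$ (uniform in $n$) and dominated convergence applied to the middle term for fixed $k$, gives $Z \in \mathcal{H}$ after passing to $L^1$ norms and letting first $n\to\infty$ then $k\to\infty$.

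By hypothesis $\mathcal{H}$ contains the family
$$
\mathcal{M} = \{f(X_{t_1},\ldots,X_{t_k}) : k \ge 1,\; t_1,\ldots,t_k \in \mathcal{D},\; f \in C_b(\mathbb{R}^k)\},
$$
which is multiplicatively stable: the product of two elements, after aggregating the time points used into a common tuple $r_1,\ldots,r_p$, can be rewritten as $h(X_{r_1},\ldots,X_{r_p})$ with $h\in C_b(\mathbb{R}^p)$ obtained by multiplying the pullbacks of $f$ and $g$ along the appropriate coordinate projections. The functional monotone class theorem then yields that $\mathcal{H}$ contains every bounded function measurable with respect to $\sigma(\mathcal{M}) = \sigma(X_t, t\in\mathcal{D})$. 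To conclude, I verify $\sigma(X_t, t \in \mathcal{D}) = \mathcal{A}$: since $T \in \mathcal{D}$ handles $X_T$ directly, and for any $t\in[0,T)$ one can pick $s_\ell \in \mathcal{D}$ with $s_\ell \downarrow t$ and use the right-continuity of $X$ to write $X_t = \lim_\ell X_{s_\ell}$, each $X_t$ is $\sigma(X_s, s\in\mathcal{D})$-measurable.

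The substantive step I anticipate will be the monotone class setup, specifically verifying that $\mathcal{H}$ is stable under bounded monotone convergence despite only having convergence in probability at hand; this is precisely why the three-term decomposition above is needed and why one must pass through an $L^1$ estimate, exploiting the $L^1$-contractivity of conditional expectation and uniform boundedness. Once this is in place, the multiplicative stability of $\mathcal{M}$ and the reduction $\sigma(X_t, t\in\mathcal{D}) = \mathcal{A}$ are routine.
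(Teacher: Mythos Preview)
Your argument is correct, but it takes a different route from the paper's. The paper proves sufficiency by a direct $\varepsilon/3$ estimate: given $A\in\mathcal A$, it invokes the $L^1$-density of the functions $f(X_{t_1},\ldots,X_{t_k})$ (with $t_i\in\mathcal D$, $f\in C_b$) in $L^1(\mathcal A)$ to pick one with $E|f(X_{t_1},\ldots,X_{t_k})-1_A|<\varepsilon$, and then bounds $P(|E(1_A\mid\mathcal A^n)-1_A|\ge\eta)$ by splitting into three terms and applying Markov's inequality together with the hypothesis. Your approach instead unfolds that density statement via the functional monotone class theorem, building the class $\mathcal H$ and checking its stability under bounded monotone limits. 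Both arguments ultimately rest on the same two ingredients---the identification $\sigma(X_t,\,t\in\mathcal D)=\mathcal A$ from right-continuity and $T\in\mathcal D$, and the $L^1$-contractivity of conditional expectation---but the paper packages them into a short approximation argument, while you make the structure explicit through the monotone class machinery. The paper's version is shorter once one grants the $L^1$-density claim; yours is more self-contained because it proves that claim along the way.
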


\begin{proof}
Necessity follows from  the definition of the weak convergence of $\sigma$-fields. Let us prove the sufficiency. Let $A\in\mathcal A$ and $\gep>0$. There exists $k\in\mathbb N$ and $t_1, \ldots, t_k$ in $\mathcal D$ such that 
$$
E(|f(X_{t_1}, \ldots, X_{t_k})-1_A|)<\varepsilon.
$$
Let $\eta>0$. We need to show that $P(|E(1_A\mid\mathcal{A}^n)-1_A|\geq \eta)$ converges to zero.
\begin{align*}
P&(|E(1_A\mid\mathcal A^n)-1_A|\geq \eta)\leq P(|E(1_A\mid\mathcal A^n)-E(f(X_{t_1}, \ldots, X_{t_k})\mid\mathcal A^n)|\geq\frac{\eta}{3})\\
&+P(|E(f(X_{t_1}, \ldots, X_{t_k})\mid\mathcal A^n)-f(X_{t_1}, \ldots, X_{t_k})|\geq\frac{\eta}{3})+P(|f(X_{t_1}, \ldots, X_{t_k})-1_A|\geq\frac{\eta}{3})\\
&\leq \frac{6}{\eta}E(|f(X_{t_1}, \ldots, X_{t_k})-1_A|)+P(|E(f(X_{t_1}, \ldots, X_{t_k})\mid\mathcal A^n)-f(X_{t_1}, \ldots, X_{t_k})|\geq\frac{\eta}{3})\\
&\leq \frac{6}{\eta}\varepsilon+P(|E(f(X_{t_1}, \ldots, X_{t_k})\mid\mathcal A^n)-f(X_{t_1}, \ldots, X_{t_k})|\geq\frac{\eta}{3})
\end{align*}
where the second inequality follows from the Markov inequality. By assumption, there exists $N$ such that for all $n\geq N$,
$$
P(|E(f(X_{t_1}, \ldots, X_{t_k})\mid\mathcal A^n)-f(X_{t_1}, \ldots, X_{t_k})|\geq\frac{\eta}{3})\leq \varepsilon
$$
hence $P(|E(1_A\mid\mathcal A^n)-1_A|\geq \eta)\leq (\frac{6}{\eta}+1)\varepsilon$.
\end{proof}

In \cite{Coquet}, the authors provide cases where the weak convergence of a sequence of natural filtrations of given c\`adl\`ag processes is guaranteed. We provide here a similar result for point wise weak convergence of the associated $\sigma$-fields.

\begin{lemma}\label{L:conv}
Let $(X^n)_{n\geq 1}$ be a sequence of c\`adl\`ag processes converging in probability to a c\`adl\`ag process $X$. Let $\mathbb F^n$ and $\mathbb F$ be the natural filtrations of $X^n$ and $X$ respectively. Then $\mathcal F^n_t\stackrel{w}{\rightarrow}\mathcal F_t$ for all $t$ such that $P(\Delta X_t \neq 0)=0$.
\end{lemma}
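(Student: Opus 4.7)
The plan is to verify the criterion of Lemma \ref{L:carac} applied to the process $X$ restricted to $[0,t]$, identifying $\mathcal F_t$ with $\sigma(X_s : 0 \le s \le t)$. Fix $t$ with $P(\Delta X_t \neq 0) = 0$. Since each c\`adl\`ag path has at most countably many jumps, a Fubini argument yields $\int_0^T P(\Delta X_s \neq 0)\,ds = 0$, so the set $\mathcal C := \{s \in [0,T] : P(\Delta X_s \neq 0) = 0\}$ has full Lebesgue measure. Consequently $\mathcal D := (\mathcal C \cap [0,t]) \cup \{t\}$ is a dense subset of $[0,t]$ that contains $t$.

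Next, I would invoke the standard fact that Skorohod $J_1$ convergence in probability implies $X^n_s \xrightarrow{P} X_s$ at every continuity time $s$ of the limit; for any $t_1,\ldots,t_k \in \mathcal D$ this gives $(X^n_{t_1},\ldots,X^n_{t_k}) \xrightarrow{P} (X_{t_1},\ldots,X_{t_k})$. Hence for any continuous bounded $f:\mathbb R^k\to\mathbb R$, $f(X^n_{t_1},\ldots,X^n_{t_k}) \xrightarrow{P} f(X_{t_1},\ldots,X_{t_k})$, and by boundedness also in $L^1$. Because each $t_i \le t$ and $\mathbb F^n$ is the natural filtration of $X^n$, the random variable $f(X^n_{t_1},\ldots,X^n_{t_k})$ is $\mathcal F^n_t$-measurable, which lets me write
$$E(f(X_{t_1},\ldots,X_{t_k}) \mid \mathcal F^n_t) - f(X_{t_1},\ldots,X_{t_k}) = A_n + B_n,$$
with $A_n := E\bigl(f(X_{t_1},\ldots,X_{t_k}) - f(X^n_{t_1},\ldots,X^n_{t_k}) \mid \mathcal F^n_t\bigr)$ and $B_n := f(X^n_{t_1},\ldots,X^n_{t_k}) - f(X_{t_1},\ldots,X_{t_k})$. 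The term $B_n$ vanishes in probability by the previous step; the term $A_n$ vanishes in $L^1$ (hence in probability) since conditional expectation is an $L^1$-contraction applied to an $L^1$-null difference. An appeal to Lemma \ref{L:carac} then delivers $\mathcal F^n_t \xrightarrow{w} \mathcal F_t$.

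The main technical obstacle is reconciling $\mathcal F_t$, the $P$-completed right-continuous filtration, with the raw $\sigma(X_s : s \le t)$ to which Lemma \ref{L:carac} strictly applies. This is mild given that $t$ is not a fixed discontinuity of $X$: completion by $P$-null sets is transparent to the weak convergence of $\sigma$-fields (both $E(\mathbf 1_B\mid\mathcal F^n_t)$ and $\mathbf 1_B$ are insensitive to modification on null sets), and the right-continuous enhancement adds nothing beyond null sets because $X_{t+\varepsilon} \xrightarrow{P} X_t$ as $\varepsilon\downarrow 0$, so every event in $\bigcap_{\varepsilon>0}\sigma(X_s : s \le t+\varepsilon)$ already lies in the completion of $\sigma(X_s : s \le t)$.
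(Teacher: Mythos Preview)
Your proof is correct and follows essentially the same route as the paper: apply Lemma~\ref{L:carac} to $X$ restricted to $[0,t]$, use that $f(X^n_{t_1},\ldots,X^n_{t_k})$ is $\mathcal F^n_t$-measurable, and split the target expression into two pieces controlled by the $L^1$ convergence of $f(X^n_{t_1},\ldots,X^n_{t_k})\to f(X_{t_1},\ldots,X_{t_k})$. Your $A_n+B_n$ decomposition is exactly the paper's two Markov-inequality terms, and you are more explicit than the paper in constructing the dense set $\mathcal D$ via Fubini.

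One caveat: your final paragraph on reconciling the raw $\sigma$-field with its right-continuous augmentation is not quite right. The implication ``$X_{t+\varepsilon}\xrightarrow{P}X_t$, hence $\bigcap_{\varepsilon>0}\sigma(X_s:s\le t+\varepsilon)$ adds only null sets to $\sigma(X_s:s\le t)$'' fails in general (take $X_s=sY$ at $t=0$). The paper simply glosses over this point and applies Lemma~\ref{L:carac} directly, so you are not worse off than the original; but the justification you give should not be relied upon.
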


\begin{proof}
Let $t$ be such that $P(\Delta X_t \neq 0)=0$. Since $X$ is c\`adl\`ag, there exists $k\in\mathbb N$, and $t_1,\ldots,t_k \leq t$ such that $P(\Delta X_{t_i} \neq 0)=0$, for all $1\leq i\leq k$. Let $f : \mathbb R^k\rightarrow \mathbb R$ be a continuous and bounded function. By Lemma \ref{L:carac}, it suffices to show that
$$
E(f(X_{t_1}, \ldots, X_{t_k})\mid\mathcal F^n_t)\stackrel{P}{\rightarrow}f(X_{t_1}, \ldots, X_{t_k})
$$
An application of Markov's inequality leads to the following estimate
\begin{align*}
P(|E(f(X_{t_1}, \ldots,& X_{t_k})\mid\mathcal F^n_t)-f(X_{t_1}, \ldots, X_{t_k})|\geq \eta)\\
&\leq P(|E(f(X_{t_1}, \ldots, X_{t_k})-f(X^n_{t_1}, \ldots, X^n_{t_k})\mid\mathcal F^n_t)|\geq \frac{\eta}{2})\\
&+P(|E(f(X^n_{t_1}, \ldots, X^n_{t_k})\mid\mathcal F^n_t)-f(X_{t_1}, \ldots, X_{t_k})|\geq \frac{\eta}{2})\\
&\leq \frac{4}{\eta}E(|f(X^n_{t_1}, \ldots, X^n_{t_k})-f(X_{t_1}, \ldots, X_{t_k})|)
\end{align*}
Since $X^n\stackrel{P}{\rightarrow}X$ and $P(\Delta X_{t_i} \neq 0)=0$, for all $1\leq i\leq k$, it follows that
\begin{equation}\label{eq:discr}
(X^n_{t_1}, \ldots X^n_{t_k})\stackrel{P}{\rightarrow}(X_{t_1}, \ldots X_{t_k})
\end{equation}
and hence $f(X^n_{t_1}, \ldots X^n_{t_k})$ converges in $L^1$ to $f(X_{t_1}, \ldots X_{t_k})$. This ends the proof of the lemma.
\end{proof}

For a given c\`adl\`ag process $X$, a time $t$ such that $P(\Delta X_t \neq 0)>0$ will be called a fixed time of discontinuity of $X$, and we will say that $X$ has no fixed times of discontinuity if $P(\Delta X_t\neq 0)=0$ for all $0\leq t\leq T$. Lemma \ref{L:conv} can be improved when the sequence $X^n$ is the discretization of the c\`adl\`ag process $X$ along some refining sequence of subdivisions $(\pi_n)_{n\geq 1}$ such that each fixed time of discontinuity of $X$ belongs to $\cup_n \pi_n$.

\begin{lemma}\label{L:discr}
Let $X$ be a c\`adl\`ag process. Consider a sequence of subdivisions $(\pi_n=\{t^n_k\}, n\geq~1)$ whose mesh tends to zero and let $X_n$ be the discretized process defined by $X_t^n = X_{t^n_k}$, for all $t_n^k \leq t < t^n_{k+1}$. Let $\mathbb F$ and $\mathbb F^n$ be the natural filtrations of $X$ and $X^n$. If each fixed time of discontinuity of $X$ belongs to $\cup_n \pi_n$, then $\mathcal F^n_t\stackrel{w}{\rightarrow}\mathcal F_t$, for all $t$.
\end{lemma}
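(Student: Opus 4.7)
The plan is to mimic the proof of Lemma \ref{L:conv} and, at the single point where it would fail, replace a $J_1$-convergence step by an exact equality. The entire difficulty is concentrated at the time $t$ itself, and the hypothesis that every fixed discontinuity of $X$ lies in $\bigcup_n \pi_n$ (combined with the refining of the subdivisions mentioned in the paragraph introducing the lemma) is tailored precisely to give $X^n_t = X_t$ almost surely for all $n$ large enough.

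First I would apply Lemma \ref{L:carac} to the stopped c\`adl\`ag process $Y_s := X_{s\wedge t}$, noting that $\sigma(Y_s,\, 0\leq s\leq T) = \mathcal F_t$. Since a c\`adl\`ag process has at most countably many fixed times of discontinuity, I can choose a dense subset $\mathcal D \subset [0,T]$ containing $T$ and $t$ and containing no other fixed time of discontinuity of $X$. By Lemma \ref{L:carac}, it is then enough to prove
$$
E\bigl(f(X_{s_1},\ldots,X_{s_k})\mid \mathcal F^n_t\bigr) \stackrel{P}{\rightarrow} f(X_{s_1},\ldots,X_{s_k})
$$
for all $k\in\mathbb N$, all $s_1,\ldots,s_k \in \mathcal D \cap [0,t]$, and every continuous bounded $f : \mathbb R^k \to \mathbb R$.

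Next, following the same three-term triangle decomposition as in the proof of Lemma \ref{L:conv}, I would insert $f(X^n_{s_1},\ldots,X^n_{s_k})$, which is $\mathcal F^n_t$-measurable because each $s_i \leq t$ and $X^n$ is $\mathbb F^n$-adapted, and then apply Markov's inequality. This reduces the problem to the $L^1$ convergence
$$
E\bigl|f(X^n_{s_1},\ldots,X^n_{s_k}) - f(X_{s_1},\ldots,X_{s_k})\bigr| \longrightarrow 0,
$$
which, by continuity and boundedness of $f$ and dominated convergence, follows from the joint convergence in probability of the vectors. For each $s_i \neq t$, the point $s_i$ is not a fixed time of discontinuity of $X$ by construction of $\mathcal D$, so I would argue exactly as in Lemma \ref{L:conv}, using the standard fact that the discretization of a c\`adl\`ag process along a mesh tending to zero satisfies $X^n \stackrel{P}{\rightarrow} X$ in the Skorohod $J_1$ topology. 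For the remaining point $s_i = t$, the hypothesis $t \in \bigcup_n \pi_n$ together with the refining of the $(\pi_n)$ forces $t \in \pi_n$ for all $n$ sufficiently large, so that by the very definition of $X^n$ one has $X^n_t = X_t$ almost surely for such $n$.

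The conceptually crucial step is the last one: the $J_1$ topology on its own does not guarantee $X^n_t \to X_t$ at jump times of $X$, and this is exactly where the argument of Lemma \ref{L:conv} breaks down. The present hypotheses are designed precisely to bypass that failure by upgrading $J_1$ convergence at $t$ to the almost sure pointwise equality $X^n_t = X_t$ for large $n$, which is what then drives the whole argument through.
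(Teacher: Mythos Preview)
Your argument is correct and is exactly the approach the paper takes; the paper's own proof is a one-sentence pointer back to Lemma~\ref{L:conv}, and you have supplied the details it omits (the choice of the dense set $\mathcal D$, and the reason why $X^n_t = X_t$ for large $n$ at a fixed discontinuity). One small presentational point: when you invoke ``the hypothesis $t\in\bigcup_n\pi_n$'' you are implicitly assuming $t$ is a fixed time of discontinuity; it would be cleaner to note first that if $t$ is not a fixed discontinuity then Lemma~\ref{L:conv} already gives the result, so that only the case $t$ a fixed discontinuity needs the new argument.
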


\begin{proof}
The proof is essentially the same as that of Lemma \ref{L:conv}. Now, equation (\ref{eq:discr}) holds because the subdivision contains the discontinuity points of $X$. 
\end{proof}

We will also need the two following lemmas from the theory of weak convergence of $\sigma$-fields. The first result is proved in \cite{Coquet0} and the second one in \cite{Coquet}.

\begin{lemma}\label{L:vee}
Let $(\mathcal A^n)_{n\geq 1}$ and $(\mathcal B^n)_{n\geq 1}$ be two sequences of $\sigma$-fields that weakly converge to $\mathcal A$ and $\mathcal B$, respectively. Then
$$
\mathcal A^n\vee\mathcal B^n\stackrel{w}{\rightarrow}\mathcal A\vee\mathcal B
$$
\end{lemma}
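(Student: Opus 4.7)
By Lemma~\ref{L:sigma}, the statement $\mathcal{A}^n \vee \mathcal{B}^n \stackrel{w}{\rightarrow} \mathcal{A} \vee \mathcal{B}$ is equivalent to showing that $E(Z \mid \mathcal{A}^n \vee \mathcal{B}^n) \stackrel{P}{\rightarrow} Z$ for every integrable $\mathcal{A} \vee \mathcal{B}$-measurable random variable $Z$. The plan is to first establish this for products $Z = UV$ with $U$ bounded and $\mathcal{A}$-measurable and $V$ bounded and $\mathcal{B}$-measurable, and then extend by a functional monotone class argument plus truncation.

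For the product case, set $U_n = E(U \mid \mathcal{A}^n)$ and $V_n = E(V \mid \mathcal{B}^n)$, both uniformly bounded by the same constant as $U$ and $V$. By Lemma~\ref{L:sigma} applied separately to the hypotheses $\mathcal{A}^n \stackrel{w}{\rightarrow} \mathcal{A}$ and $\mathcal{B}^n \stackrel{w}{\rightarrow} \mathcal{B}$, we have $U_n \to U$ and $V_n \to V$ in probability, hence by dominated convergence also in $L^1$. The product $U_n V_n$ is $\mathcal{A}^n \vee \mathcal{B}^n$-measurable, so $E(U_n V_n \mid \mathcal{A}^n \vee \mathcal{B}^n) = U_n V_n$. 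Writing
$$
E(UV \mid \mathcal{A}^n \vee \mathcal{B}^n) - UV = E(UV - U_n V_n \mid \mathcal{A}^n \vee \mathcal{B}^n) + (U_n V_n - UV),
$$
the second term tends to $0$ in probability; the first tends to $0$ in $L^1$ since conditional expectation is an $L^1$-contraction and $U_n V_n \to UV$ in $L^1$ (using the uniform bound $|UV - U_n V_n| \leq M |U - U_n| + M |V - V_n|$).

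Next, let $\mathcal{H}$ be the class of bounded $\mathcal{A} \vee \mathcal{B}$-measurable random variables $Z$ such that $E(Z \mid \mathcal{A}^n \vee \mathcal{B}^n) \stackrel{P}{\rightarrow} Z$. The class $\mathcal{H}$ is a vector space containing the constants and is stable under bounded monotone limits: if $0 \leq Z_k \uparrow Z$ with all $Z_k \in \mathcal{H}$ and $Z$ bounded, then $Z_k \to Z$ in $L^1$, and the triangle inequality
$$
|E(Z \mid \mathcal{A}^n \vee \mathcal{B}^n) - Z| \leq E(|Z - Z_k| \mid \mathcal{A}^n \vee \mathcal{B}^n) + |E(Z_k \mid \mathcal{A}^n \vee \mathcal{B}^n) - Z_k| + |Z_k - Z|
$$
shows $Z \in \mathcal{H}$, choosing $k$ large first to make the first and third terms small in $L^1$ uniformly in $n$, then letting $n \to \infty$ for the middle term. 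By the previous step, $\mathcal{H}$ contains the multiplicative class of products $UV$ of bounded $\mathcal{A}$- and $\mathcal{B}$-measurable random variables. The monotone class theorem then implies that $\mathcal{H}$ contains every bounded $\sigma(UV)$-measurable random variable; since $\sigma(UV : U \in b\mathcal{A}, V \in b\mathcal{B}) = \mathcal{A} \vee \mathcal{B}$, we are done for bounded $Z$.

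The extension from bounded to integrable $Z$ is routine via truncation $Z_k = (-k) \vee (Z \wedge k)$, again using the triangle inequality above and the $L^1$-contraction property of conditional expectation. The main obstacle in this program is ensuring the joint control of $n$ and the approximation index $k$ in the monotone class step, which is exactly what the triangle inequality splitting handles cleanly because the approximation error in $L^1$ dominates the conditional expectation term uniformly in $n$.
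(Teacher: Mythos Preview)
Your argument is correct and is essentially the standard route to this result: reduce to products of bounded $\mathcal{A}$- and $\mathcal{B}$-measurable functions via the decomposition $E(UV\mid\mathcal{A}^n\vee\mathcal{B}^n)-UV = E(UV-U_nV_n\mid\mathcal{A}^n\vee\mathcal{B}^n)+(U_nV_n-UV)$, then pass to all bounded $\mathcal{A}\vee\mathcal{B}$-measurable variables by the functional monotone class theorem, and finally truncate. The uniform-in-$n$ control via the $L^1$-contraction of conditional expectation is exactly the right tool for the monotone class closure and the truncation step.

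Note, however, that the paper does not actually prove this lemma: it states it and refers the reader to \cite{Coquet0} (Coquet, M\'emin and Slominski). So there is no in-paper proof to compare against; your argument supplies what the paper only cites. The proof in \cite{Coquet0} proceeds along the same lines as yours.
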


\begin{lemma}\label{L:subset}
Let $(\mathcal A^n)_{n\geq 1}$ and $(\mathcal B^n)_{n\geq 1}$ be two sequences of $\sigma$-fields such that $\mathcal A^n \subset \mathcal B^n$ for all $n$. Let $\mathcal A$ be a $\sigma$-field. If $\mathcal A^n\stackrel{w}{\rightarrow}\mathcal A$ then $\mathcal B^n\stackrel{w}{\rightarrow}\mathcal A$.
\end{lemma}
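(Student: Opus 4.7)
The natural route is to compare the two sequences of conditional expectations using the tower property, since $\mathcal{A}^n \subset \mathcal{B}^n$. Fix $B \in \mathcal{A}$ and set
\[
Z_n := E(1_B \mid \mathcal{B}^n), \qquad W_n := E(1_B \mid \mathcal{A}^n).
\]
By the tower property, $E(Z_n \mid \mathcal{A}^n) = W_n$, and by assumption $W_n \to 1_B$ in probability. The task is to upgrade this to $Z_n \to 1_B$ in probability. Since all these random variables are bounded by $1$, it is natural to work in $L^2$.

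First I would use Jensen's inequality applied to the conditional expectation with respect to $\mathcal{A}^n$: because $W_n = E(Z_n \mid \mathcal{A}^n)$,
\[
W_n^2 \le E(Z_n^2 \mid \mathcal{A}^n), \quad \text{hence} \quad E(W_n^2) \le E(Z_n^2).
\]
On the other hand, the contractivity of conditional expectation gives $E(Z_n^2) \le E(1_B^2) = P(B)$. So we have the sandwich
\[
E(W_n^2) \le E(Z_n^2) \le P(B).
\]
Next, from $W_n \to 1_B$ in probability and uniform boundedness we deduce by dominated convergence that $W_n \to 1_B$ in $L^2$, and in particular $E(W_n^2) \to P(B)$. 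Combining with the sandwich yields $E(Z_n^2) \to P(B)$.

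The final step is the standard identity
\[
E\bigl((Z_n - 1_B)^2\bigr) = E(Z_n^2) - 2\,E(Z_n 1_B) + P(B) = P(B) - E(Z_n^2),
\]
where $E(Z_n 1_B) = E(Z_n E(1_B \mid \mathcal{B}^n)) = E(Z_n^2)$ by conditioning inside. Hence $Z_n \to 1_B$ in $L^2$, and therefore in probability. Since $B \in \mathcal{A}$ was arbitrary, $\mathcal{B}^n \stackrel{w}{\rightarrow} \mathcal{A}$.

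The main obstacle is psychological rather than technical: one must remember that weak convergence of $\sigma$-fields is a convergence of projections, so the contractivity $\|E(\cdot\mid\mathcal{B}^n)\|_{L^2} \le \|\cdot\|_{L^2}$ combined with Jensen between nested conditioning sigma-fields forces the intermediate projection $Z_n$ to live in a shrinking interval around $P(B)$, which is enough. No further quantitative information on the $\sigma$-fields is needed.
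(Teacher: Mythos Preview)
Your argument is correct. The paper itself does not prove this lemma; it simply refers the reader to \cite{Coquet} (Coquet, M\'emin and Slominski). Your proof via the $L^2$ sandwich $E(W_n^2)\le E(Z_n^2)\le P(B)$ together with the identity $E((Z_n-1_B)^2)=P(B)-E(Z_n^2)$ is the standard one and is essentially what appears in that reference.
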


As pointed out in \cite{Coquet}, the results in Lemmas \ref{L:vee} and \ref{L:subset} are not true as far as one is interested in weak convergence of filtrations.

\subsection{Approximation of a given stopping time}

Let $(\mathbb{G}^n)_{n\geq 1}$ be a sequence of right-continuous filtrations and let $\mathbb{G}$ be a right-continuous filtration such that $\mathcal{G}^n_t\stackrel{w}{\rightarrow}\mathcal{G}_t$ for all $t$. In order to obtain our filtration expansion results, we need a key theorem that guarantees the $\mathbb G$~semimartingale property of a limit of $\mathbb G^n$~semimartingales as in Theorem \ref{T:semimg2}. The following lemma, which permits to approximate any $\mathbb G$~bounded stopping time $\tau$ by a sequence of $\mathbb G^n$~stopping times, will be of crucial importance in the proof of Theorem \ref{T:semimg2}, Part (ii). We prove this result using successive approximations in the case where $\tau$ takes a finite number of values and show how this property is inherited by bounded stopping times. We do not study the general case (unbounded stopping times) since we are working on the finite time interval $[0,T]$.

\begin{lemma}\label{L:stop}
Let $(\mathbb{G}^n)_{n\geq 1}$ be a sequence of right-continuous filtrations and let $\mathbb{G}$ be a right-continuous filtration such that $\mathcal{G}^n_t\stackrel{w}{\rightarrow}\mathcal{G}_t$ for all $t$. Let $\tau$ be a bounded $\mathbb G$ stopping time. Then there exists $\phi : \mathbb{N}\rightarrow \mathbb{N}$ strictly increasing and a bounded sequence $(\tau_n)_{n\geq 1}$ such that the subsequence $(\tau_{\phi(n)})_{n\geq 1}$ converges in probability to $\tau$ and each $\tau_{\phi(n)}$ is a $\mathbb G^{\phi(n)}$~stopping time.
\end{lemma}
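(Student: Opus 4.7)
The plan is to first handle the case when $\tau$ takes finitely many values and then reduce the general case to this one via a dyadic upper approximation combined with a diagonal extraction.

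For the finite-valued case, assume $\tau$ takes values $s_1<s_2<\cdots<s_k$, and set $B_i:=\{\tau=s_i\}\in\mathcal{G}_{s_i}$. The hypothesis $\mathcal{G}^n_{s_i}\stackrel{w}{\rightarrow}\mathcal{G}_{s_i}$ combined with Lemma~\ref{L:sigma} gives, for each $i$, $E(1_{B_i}\mid\mathcal{G}^n_{s_i})\stackrel{P}{\rightarrow}1_{B_i}$; extracting a single subsequence $\phi$ (common to the finitely many indices $i$), I may upgrade these to almost-sure convergence. I would then set
$$C_i^n:=\bigl\{E(1_{B_i}\mid\mathcal{G}^{\phi(n)}_{s_i})>\tfrac{1}{2}\bigr\}\in\mathcal{G}^{\phi(n)}_{s_i},$$
which satisfies $1_{C_i^n}\to 1_{B_i}$ a.s., and disjointify by $D_1^n:=C_1^n$ and $D_i^n:=C_i^n\setminus(D_1^n\cup\cdots\cup D_{i-1}^n)$ for $i\ge 2$. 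Since $\mathcal{G}^{\phi(n)}_{s_j}\subset\mathcal{G}^{\phi(n)}_{s_i}$ for $j\le i$, each $D_i^n$ lies in $\mathcal{G}^{\phi(n)}_{s_i}$, so
$$\tau_{\phi(n)}:=\sum_{i=1}^k s_i\,1_{D_i^n}+T\cdot 1_{\Omega\setminus\bigcup_i D_i^n}$$
is a $\mathbb{G}^{\phi(n)}$-stopping time bounded by $T$. Using the disjointness of the $B_i$, the a.s.\ convergence $1_{C_i^n}\to 1_{B_i}$ propagates by induction to $1_{D_i^n}\to 1_{B_i}$, and hence $\tau_{\phi(n)}\stackrel{P}{\rightarrow}\tau$.

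For a general bounded $\mathbb{G}$-stopping time $\tau\le T$, I would approximate from above by the dyadic stopping times $\tau^{(m)}:=(\lceil 2^m\tau\rceil/2^m)\wedge T$, each taking finitely many values and satisfying $|\tau^{(m)}-\tau|\le 2^{-m}$. Applying the first step to $\tau^{(m)}$ yields, for each $m$, a strictly increasing $\psi_m:\mathbb{N}\to\mathbb{N}$ and a $\mathbb{G}^{\psi_m(j)}$-stopping time $\sigma^{(m)}_j$ with $\sigma^{(m)}_j\stackrel{P}{\rightarrow}\tau^{(m)}$ as $j\to\infty$. A diagonal extraction then chooses $j_m$ such that $\phi(m):=\psi_m(j_m)$ is strictly increasing in $m$ and $P(|\sigma^{(m)}_{j_m}-\tau^{(m)}|>1/m)<1/m$; defining $\tau_{\phi(m)}:=\sigma^{(m)}_{j_m}$ and $\tau_n:=0$ for indices $n$ outside the image of $\phi$, the triangle inequality together with $|\tau^{(m)}-\tau|\le 2^{-m}$ yields $\tau_{\phi(m)}\stackrel{P}{\rightarrow}\tau$.

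The main obstacle is the finite-valued step: weak convergence of $\sigma$-fields delivers only conditional-expectation approximations, so one must convert these into actual sets that simultaneously (i) carry the correct $\mathcal{G}^{\phi(n)}_{s_i}$-measurability, (ii) are pairwise disjoint so as to constitute the level sets of a single stopping time, and (iii) still approximate the original $B_i$. Extracting a subsequence to get almost-sure convergence, thresholding at $1/2$, and disjointifying in the correct order together address all three constraints; once this is in place, the dyadic-plus-diagonal passage to arbitrary bounded stopping times is routine.
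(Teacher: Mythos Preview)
Your proof is correct and follows essentially the same route as the paper's: both reduce to the finite-valued case via dyadic upper approximation with a diagonal extraction, and in the finite-valued case both threshold the conditional expectations $E(1_{\{\tau=s_i\}}\mid\mathcal{G}^n_{s_i})$ at $\tfrac12$ after passing to an a.s.-convergent subsequence. Your explicit disjointification $D_i^n=C_i^n\setminus(D_1^n\cup\cdots\cup D_{i-1}^n)$ is exactly the paper's construction $\tau_m=\min\{t_i:E(1_{\{\tau=t_i\}}\mid\mathcal{G}^m_{t_i})>\tfrac12\}$ unwound into level sets; your addition of the default value $T$ on the residual set $\Omega\setminus\bigcup_i D_i^n$ is a minor cosmetic improvement, making the stopping time everywhere defined, which the paper leaves implicit.
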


\begin{proof}
Let $\tau$ be a $\mathbb G$~stopping time bounded by $T$. Then there exists a sequence $\tau_n$ of $\mathbb G$ stopping times decreasing a.s. to $\tau$ and taking values in $\big\{\frac{k}{2^n}, k\in\{0, 1, \cdots, [2^nT]+1\}\big\}$. This is true since the sequence $\tau_n=\frac{[2^n\tau]+1}{2^n}$ obviously works. Hence $\tau_n$ takes a finite number of values. We claim that 

\textbf{Claim.} \textit{for each $n$, we can construct a sequence $(\tau_{n,m})_{m\geq 1}$ converging in probability to~$\tau_n$, and such that $\tau_{n,m}$ is a $\mathbb G^m$~stopping time, for each $m$.}

Assume we can do so and let $\eta>0$ and $\varepsilon>0$. Then for each $n$, $\lim_{m\rightarrow\infty}P(|\tau_{n,m}-\tau_n|>\frac{\eta}{2})=0$, i.e.~for each $n$ there exists $M_n$ such that for all $m\geq M_n$, $P(|\tau_{n,m}-\tau_n|>\frac{\eta}{2})\leq\frac{\varepsilon}{2}$. Define $\phi(1)=M_1$ and $\phi(n)=\max(M_n, \phi(n-1)+1)$ by induction. The application $\phi:\mathbb N\rightarrow\mathbb N$ is strictly increasing, and for each $n$, $\tau_{n,\phi(n)}$ is a $\mathbb G^{\phi(n)}$~stopping time and $P(|\tau_{n,\phi(n)}-\tau_n|>\frac{\eta}{2})\leq\frac{\varepsilon}{2}$. It follows that
$$
P(|\tau_{n,\phi(n)}-\tau|>\eta)\leq P(|\tau_{n,\phi(n)}-\tau_n|>\frac{\eta}{2})+P(|\tau_n-\tau|>\frac{\eta}{2})\leq \frac{\varepsilon}{2}+P(|\tau_n-\tau|>\frac{\eta}{2})
$$
Since $\tau_n$ converges to $\tau$, there exists some $n_0$, such that for all $n\geq n_0$, $P(|\tau_n-\tau|>\frac{\eta}{2})\leq \frac{\gep}{2}$. Hence $\tau_{n,\phi(n)}\stackrel{P}{\rightarrow}\tau$. So in order to prove the lemma, it only remains to prove the claim above. 

\textbf{Proof of the claim.} We drop the index $n$ and assume that $\tau$ is a $\mathbb G$~stopping time that takes a finite number of values $t_1,\cdots, t_M$.  Since $\mathbb G$ is right-continuous, $1_{\{\tau=t_i\}}$ is $\mathcal{G}_{t_i}$~measurable, and since by assumption, for all $i$, $\mathcal{G}^m_{t_i}\stackrel{w}{\rightarrow}\mathcal{G}_{t_i}$, it follows that for all $i$
$$
E(1_{\{\tau=t_i\}}\mid\mathcal{G}^m_{t_i})\stackrel{P}{\rightarrow}1_{\{\tau=t_i\}}
$$
Now for $i=1$, we can extract a subsequence $E(1_{\{\tau=t_1\}}\mid\mathcal{G}^{\phi_1(m)}_{t_1})$ converging to $1_{\{\tau=t_1\}}$~a.s. and any sub-subsequence will also converge to $1_{\{\tau=t_1\}}$ a.s. Also, $\mathcal{G}^{\phi_1(m)}_{t_2}\stackrel{w}{\rightarrow}\mathcal{G}_{t_2}$, hence $E(1_{\{\tau=t_2\}}\mid\mathcal{G}^{\phi_1(m)}_{t_2})\stackrel{P}{\rightarrow}1_{\{\tau=t_2\}}$, and we can extract a further subsequence $E(1_{\{\tau=t_2\}}\mid\mathcal{G}^{\phi_1(\phi_2(m))}_{t_2})$ that converges a.s. to $1_{\{\tau=t_2\}}$. Since we have a finite number of possible values, we can repeat this reasoning up to time $t_M$. Define then $\phi=\phi_1 \circ \phi_2 \circ \cdots \circ \phi_n$, we get for all $i\in\{1,\cdots, M\}$, 
$$
E(1_{\{\tau=t_i\}}\mid\mathcal{G}^{\phi(m)}_{t_i})\stackrel{a.s}{\rightarrow}1_{\{\tau=t_i\}}.
$$
Define $\tau_m=\min_{\{i\mid E(1_{\{\tau=t_i\}}\mid\mathcal{G}^{m}_{t_i})>\frac{1}{2}\}}t_i$. Then
$$
\{\tau_m=t_i\}=\{E(1_{\{\tau=t_i\}}\mid\mathcal{G}^{m}_{t_i})>\frac{1}{2}\}\cap\{\forall t_j<t_i, E(1_{\{\tau=t_j\}}\mid\mathcal{G}^{m}_{t_j})\leq\frac{1}{2}\}
$$
and hence $\tau_m$ is a $\mathbb G^m$~stopping time. Also, obviously, $\tau_{\phi(m)}\stackrel{a.s}{\rightarrow}\tau$, hence $\tau_m\stackrel{P}{\rightarrow}\tau$.
\end{proof}

\subsection{Weak convergence of $\sigma$-fields and the semimartingale property}

Assume we are given a sequence of filtrations $(\mathbb F^m)_{m\geq 1}$ and define the filtration $\tilde{\mathbb F}=(\tilde{\mathcal F}_t)_{0\leq t\leq T}$, where $\tilde{\mathcal F}_t=\bigvee_{m}\mathcal F^m_t$. We prove in this section a stability result for $\tilde{\mathbb F}$~semimartingales. More precisely, we prove that if $X$ is an $\tilde{\mathbb F}$~semimartingale, then it remains an $\mathbb F$~semimartingale for any limiting (in the sense $\mathcal F^m_t\stackrel{w}{\rightarrow}\mathcal F_t$, for all $t \in [0,T]$) filtration $\mathbb F$ to which it is adapted.

The crucial tool for proving our first theorem is the Bichteler-Dellacherie characterization of semimartingales (see for example~\cite{Protter:2005}). Recall that if $\mathbb H$ is a filtration, an $\mathbb H$~predictable elementary process $H$ is a process of the form
$$
H_t(\omega) = \sum_{i=1}^{k}h_i(\omega)1_{]t_i,t_{i+1}]}(t);
$$
where $0 \leq t_1 \leq \ldots \leq t_{k+1} < \infty$, and each $h_i$ is $\mathcal H_{t_i}$~measurable. Moreover, for any $\mathbb H$~adapted c\`adl\`ag process $X$ and predictable elementary process $H$ of the above form, we write
$$
J_X(H) = \sum_{i=1}^{k} h_i(X_{t_{i+1}} - X_{t_{i}})
$$
\begin{theorem}[Bichteler-Dellacherie]\label{T:BichDell}
Let $X$ be an $\mathbb H$~adapted c\`adl\`ag process. Suppose that for every sequence $(H_n)_{n\geq 1}$ of bounded, $\mathbb H$ predictable elementary processes that are null outside a fixed interval $[0,N]$ and convergent to zero uniformly in $(\omega; t)$, we have that $\lim_{n\rightarrow\infty}J_X(H_n) = 0$ in probability. Then $X$ is an $\mathbb H$~semimartingale.
\end{theorem}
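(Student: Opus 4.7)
My plan is to prove this via the classical three-step Bichteler--Dellacherie program. The hypothesis (sequential continuity of $J_X$ at $0$ along uniformly null sequences) is a weak form of the "good integrator" property, and the task is to boost this into the stronger boundedness-in-probability property and then extract an additive semimartingale decomposition via a measure change.

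\textbf{Step 1: Total boundedness.} First I would show that the hypothesis is equivalent to the assertion that, for every $N$, the family
$$
\mathcal U_N := \{ J_X(H) : H \text{ elementary $\mathbb H$-predictable}, \ \|H\|_\infty \leq 1, \ H \equiv 0 \text{ outside } [0,N]\}
$$
is bounded in probability. The nontrivial direction uses a scaling trick: if $\mathcal U_N$ were not bounded in probability, there would exist $\delta>0$, scalars $c_n \uparrow \infty$, and elementary $H_n$ with $\|H_n\|_\infty \leq 1$ such that $P(|J_X(H_n)| > c_n) \geq \delta$. Setting $G_n := H_n/\sqrt{c_n}$, the processes $G_n$ tend to zero uniformly in $(\omega,t)$, yet on the events above $|J_X(G_n)| = |J_X(H_n)|/\sqrt{c_n} > \sqrt{c_n} \to \infty$, so $J_X(G_n) \not\to 0$ in probability, contradicting the hypothesis.

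\textbf{Step 2: Equivalent quasimartingale measure.} Given total boundedness of $\mathcal U_N$, I would invoke the Mokobodzki--Yan theorem, a Hahn--Banach / minimax result which separates the convex bounded-in-probability set $\mathcal U_N$ from a suitable positive cone in $L^1$. This produces a strictly positive bounded density $Z = dQ/dP$ with $Q \sim P$ under which $\sup_{U \in \mathcal U_N} E_Q[U] < \infty$. Unwinding the definition, this supremum is exactly the quasimartingale variation of $X$ on $[0,N]$ under $Q$, so $X$ is a $Q$-quasimartingale on each $[0,N]$.

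\textbf{Step 3: Rao decomposition and invariance.} By Rao's theorem, a quasimartingale decomposes as a difference of two nonnegative supermartingales; applying Doob--Meyer one obtains $X = M + A$ under $Q$, where $M$ is a $Q$-local martingale and $A$ is a predictable finite-variation process. Hence $X$ is a $Q$-semimartingale. Finally, since the semimartingale property is invariant under equivalent changes of measure (Girsanov--Meyer), $X$ is a $P$-semimartingale as well.

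The genuinely hard step is Step 2, the functional-analytic core of the proof where one converts a $P$-boundedness-in-probability statement into a $Q$-expectation bound via Mokobodzki--Yan; the scaling manipulation in Step 1 and the decomposition/measure-change arguments in Step 3 are essentially formal once the right machinery is in place.
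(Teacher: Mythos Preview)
The paper does not prove this theorem at all: it is stated as a classical result, with a pointer to \cite{Protter:2005}, and is then used as a black box in the proof of Theorem~\ref{T:semimg}. So there is no ``paper's own proof'' to compare against.

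Your outline is the standard textbook route (essentially the argument in Protter's book and in Dellacherie--Meyer): scale to upgrade sequential continuity at zero to boundedness in $L^0$ of the unit ball of elementary integrals, apply a Yan-type separation to manufacture an equivalent measure $Q$ under which the conditional variation is finite, then use Rao/Doob--Meyer and Girsanov--Meyer to transfer the semimartingale property back to $P$. As a sketch this is correct. Two small remarks if you want to tighten it: in Step~1 you should note that the scaling argument works interval-by-interval, i.e.\ for each fixed $N$ separately, which is exactly how you have set up $\mathcal U_N$; and in Step~2 the identification ``$\sup_{U\in\mathcal U_N} E_Q[U]$ equals the quasimartingale variation'' deserves one line of justification (choose $h_i=\mathrm{sign}\,E_Q[X_{t_{i+1}}-X_{t_i}\mid\mathcal H_{t_i}]$ to realise the supremum as the mean variation). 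With those two clarifications your proposal is a faithful, if compressed, account of the classical proof.
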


The converse is true by the Dominated Convergence Theorem for stochastic integrals. We can now state and prove the main theorem of this subsection.

\begin{theorem}\label{T:semimg}
Let $(\mathbb F^m)_{m\geq 1}$ be a sequence of filtrations. Let $\mathbb F$ be a filtration such that for all $t \in [0,T]$, $\mathcal F^m_t\stackrel{w}{\rightarrow}\mathcal F_t$. Define the filtration $\tilde{\mathbb F}=(\tilde{\mathcal F}_t)_{0\leq t\leq T}$, where $\tilde{\mathcal F}_t=\bigvee_{m}\mathcal F^m_t$. Let $X$ be an $\mathbb F$~adapted c\`adl\`ag process such that $X$ is an $\tilde{\mathbb F}$~semimartingale. Then $X$ is an $\mathbb F$~semimartingale.
\end{theorem}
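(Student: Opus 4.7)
The plan is to verify the Bichteler--Dellacherie criterion (Theorem~\ref{T:BichDell}) directly for $X$ in the filtration $\mathbb F$, leveraging the fact that it already holds in $\tilde{\mathbb F}$. Let $(H_n)_{n\ge 1}$ be an arbitrary sequence of bounded $\mathbb F$-predictable elementary processes, vanishing outside a fixed $[0,N]$, with $c_n := \sup_{\omega,t}|H_n(\omega,t)| \to 0$. Write $H_n = \sum_{i=1}^{k_n} h_i^n\,1_{]t_i^n, t_{i+1}^n]}$ with each $h_i^n$ bounded and $\mathcal F_{t_i^n}$-measurable. The goal is to show $J_X(H_n)\stackrel{P}{\to}0$; the punchline is that we can approximate each $H_n$ by a suitably chosen $\tilde{\mathbb F}$-predictable elementary process whose $J_X$-value is close.

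The natural approximation exploits the weak convergence of $\sigma$-fields at each instant $t_i^n$. I would set
$$
\tilde H_n^m := \sum_{i=1}^{k_n} E(h_i^n \mid \mathcal F^m_{t_i^n})\,1_{]t_i^n,t_{i+1}^n]}.
$$
Each $\tilde H_n^m$ is an $\mathbb F^m$-predictable elementary process (hence $\tilde{\mathbb F}$-predictable since $\mathcal F^m_t \subset \tilde{\mathcal F}_t$), is null outside $[0,N]$, and is uniformly bounded by $c_n$ because conditional expectations cannot exceed the essential supremum of the integrand. For fixed $n$, Lemma~\ref{L:sigma} applied to the $\mathcal F_{t_i^n}$-measurable random variables $h_i^n$ gives $E(h_i^n\mid\mathcal F^m_{t_i^n}) \stackrel{P}{\to} h_i^n$ as $m\to\infty$, and since the sum defining $J_X$ is finite, $J_X(\tilde H_n^m) \stackrel{P}{\to} J_X(H_n)$ as $m\to\infty$.

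Now I would perform a diagonal extraction: choose $m(n)$ so large that
$$
P\!\left(|J_X(\tilde H_n^{m(n)}) - J_X(H_n)| > \tfrac{1}{n}\right) < \tfrac{1}{n}.
$$
The sequence $(\tilde H_n^{m(n)})_{n\ge 1}$ is then a bounded $\tilde{\mathbb F}$-predictable elementary sequence, null outside $[0,N]$, with uniform bound $c_n\to 0$, so it converges to zero uniformly in $(\omega,t)$. Since $X$ is an $\tilde{\mathbb F}$-semimartingale, the Bichteler--Dellacherie theorem (Theorem~\ref{T:BichDell}, applied in $\tilde{\mathbb F}$) yields $J_X(\tilde H_n^{m(n)}) \stackrel{P}{\to} 0$.

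Combining the two limits by a triangle inequality gives $J_X(H_n) \stackrel{P}{\to} 0$. As this holds for any such sequence $(H_n)$, Theorem~\ref{T:BichDell} in the filtration $\mathbb F$ concludes that $X$ is an $\mathbb F$-semimartingale. There is no real analytic obstacle here; the only subtle point is verifying that the conditional-expectation approximants remain genuine elementary processes that satisfy the uniform bound required to invoke the semimartingale criterion in $\tilde{\mathbb F}$, and this is immediate because conditional expectations preserve the pointwise bound $c_n$.
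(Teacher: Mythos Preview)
Your proof is correct and follows essentially the same approach as the paper's: approximate each $\mathbb F$-elementary integrand by conditioning its coefficients on $\mathcal F^m_{t_i^n}$, use weak convergence of $\sigma$-fields (Lemma~\ref{L:sigma}) to get $J_X(\tilde H_n^m)\stackrel{P}{\to}J_X(H_n)$, then pick $m=m(n)$ diagonally and invoke the $\tilde{\mathbb F}$-semimartingale property on the resulting uniformly small $\tilde{\mathbb F}$-elementary sequence. The paper phrases the diagonal step with fixed $\varepsilon,\delta$ and an index $M_0^n$ rather than your $1/n$-tolerance, but the argument is the same.
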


\begin{proof}
For a fixed $N>0$, consider a sequence of bounded, $\mathbb F$ predictable elementary processes of the form
$$
H^n_t = \sum_{i=1}^{k_n}h_i^n1_{]t^n_i,t^n_{i+1}]}(t);
$$
null outside the fixed time interval $[0,N]$ and with $h^n_i$ being $\mathcal F_{t^n_i}$~measurable. Suppose that $H^n$ converges to zero uniformly in $(\omega, t)$. We prove that $ J_X(H^n) \stackrel{P}{\rightarrow} 0$.

For each $m$, define the sequence of bounded $\mathbb F^m$~predictable elementary processes
$$
H^{n,m}_t = \sum_{i=1}^{k_n}E(h_i^n\mid\mathcal{F}^m_{t^n_i})1_{]t^n_i,t^n_{i+1}]}(t);
$$
By assumption, $\mathcal F^m_t\stackrel{w}{\rightarrow}\mathcal F_t$ for all $0\leq t\leq T$. Hence for all $n$ and $1\leq i\leq k_n$, $\mathcal F^m_{t_i^n}\stackrel{w}{\rightarrow}\mathcal F_{t_i^n}$. Since $h^n_i$ is bounded (hence integrable) and $\mathcal F_{t_i^n}$~measurable, it follows from Lemma \ref{L:sigma} that $E(h^n_i\mid\mathcal F^m_{t^n_i})\stackrel{P}{\rightarrow}h^n_i$ and hence $E(h^n_i\mid\mathcal F^m_{t^n_i})(X_{t_{i+1}^n}-X_{t_i^n})\stackrel{P}{\rightarrow}h^n_i(X_{t^n_{i+1}}-X_{t^n_i})$ for each $n$ and $1\leq i\leq k_n$ since $(X_{t^n_{i+1}}-X_{t^n_i})$ is finite~a.s. 
Let $\eta>0$.
$$
P\Big(\big|J_X(H^{n,m})-J_X(H^n)\big|>\eta\Big) \leq \sum_{i=1}^{k_n}P\Big(\Big|\big(E(h^n_i\mid\mathcal F^m_{t^n_i})-h^n_i\big)\big(X_{t^n_{i+1}}-X_{t^n_i}\big)\Big|>\frac{\eta}{k_n}\Big)
$$
For each fixed $n$, the right side quantity converges to $0$ as $m$ tends to $\infty$. This proves that for each $n$,
$$
J_X(H^{n,m})\stackrel{P}{\rightarrow}J_X(H^n).
$$
Let $\delta>0$ and $\gep>0$. For each $n$ and $m$,
\begin{equation}\label{eq:exchangelim}
P(|J_X(H^n)|>\delta)\leq P\Big(\big|J_X(H^{n,m})-J_X(H^n)\big|>\frac{\delta}{2}\Big)+P(|J_X(H^{n,m})|>\frac{\delta}{2})
\end{equation}
From $J_X(H^{n,m})\stackrel{P}{\rightarrow}J_X(H^n)$, it follows that for each $n$, there exists $M^n_0$ such that for all $m\geq M^n_0$,
$$
P\Big(\big|J_X(H^{n,m})-J_X(H^n)\big|>\frac{\delta}{2}\Big)\leq \frac{\gep}{2}
$$
Hence $P(|J_X(H^n)|>\delta)\leq \frac{\gep}{2}+P(|J_X(H^{n,M^n_0})|>\frac{\delta}{2})$. First $E(h_i^n\mid\mathcal F^{M^n_0}_{t^n_i})$ is bounded, $\tilde{\mathcal F}_{t^n_i}$~measurable so that $H_t^{n,M^n_0}=\sum_{i=1}^{k_n}E(h_i^n\mid\mathcal F^{M^n_0}_{t^n_i})1_{]t^n_i,t^n_{i+1}]}(t)$ is a bounded $\tilde{\mathbb F}$~predictable process. Since $H^n$ converges to zero uniformly in $(\omega, t)$, it follows that $h^n_i$ converges to zero uniformly in $(\omega, i)$ so that there exists $n_0$ such that for each $n\geq n_0$, for all $(\omega, i)$, $|h^n_i(\omega)|\leq \gep$. Hence, for all $(\omega, t)$ and $n\geq n_0$
$$
|H^{n,M^n_0}_t(\omega)| \leq \sum_{i=1}^{k_n}E(|h_i^n|\mid\mathcal{F}^{M^n_0}_{t^n_i})(\omega)1_{]t^n_i,t^n_{i+1}]}(t)\leq \gep \sum_{i=1}^{k_n}1_{]t^n_i,t^n_{i+1}]}(t) \leq \gep
$$
Therefore $H^{n,M^n_0}$ is a sequence of bounded $\tilde{\mathbb F}$~predictable processes null outside the fixed interval $[0,N]$ that converges uniformly to zero in $(\omega, t)$. Since by assumption $X$ is a $\tilde{\mathbb F}$~semimartingale, it follows from the converse of Bichteler-Dellacherie's theorem that $J_X(H^{n,M^n_0})$ converges to zero in probability, hence, for $n$ large enough, $P(|J_X(H^{n,M^n_0})|>\frac{\delta}{2})\leq \frac{\gep}{2}$ and
$$
P(|J_X(H^n)|>\delta)\leq\gep
$$
Applying now Theorem \ref{T:BichDell} proves that $X$ is an $\mathbb F$~semimartingale.
\end{proof}

Let $X$ be an $\tilde{\mathbb F}$~semimartingale. Theorem \ref{T:semimg} proves that $X$ remains an $\mathbb F$~semimartingale for any limiting filtration $\mathbb F$ (in the sense $\mathcal F^m_t \stackrel{w}{\rightarrow} \mathcal F_t$ for all $0\leq t\leq T$) to which $X$ is adapted. Of course, if $\mathbb F\subset\tilde{\mathbb F}$, Stricker's theorem already implies that $X$ is an $\mathbb F$~semimartingale. But there is no general link between the filtration $\tilde{\mathbb F}=\bigvee_m\mathbb F^m$ and the limiting filtration~$\mathbb F$. A trivial example is given by taking $\mathbb F$ to be the trivial filtration (it can be seen from Definition \ref{D:sigmafield} that the trivial filtration satisfies $\mathcal F^m_t\stackrel{w}{\rightarrow}\mathcal F_t$, for all $t$, for any given sequence of filtrations $\mathbb F^m$). One can also have $\bigvee_m\mathbb F^m\subset\mathbb F$, as it is the case in the following important example.

\begin{example}\label{ex:dis}
Let $X$ be a c\`adl\`ag process. Consider a sequence of subdivisions $\{t_k^n\}$ whose mesh tends to zero and let $X^n$ be the discretized process defined by $X^n_t=X_{t^n_k}$, for all $t^n_k\leq t <t^n_{k+1}$. Let $\mathbb F$ and $\mathbb F^n$ be the natural filtrations of $X$ and $X^n$. It is well known that for all $t$, $\mathcal F_{t^{-}} \subset \bigvee_{n}\mathcal F^n_t\subset\mathcal F_t$. Also, $X^n$ converges a.s.~to the process $X$, hence $X^n\stackrel{P}{\rightarrow}X$. Assume now that $X$ has no fixed times of discontinuity. Then Lemma \ref{L:conv} guarantees that $\mathcal F^n_t\stackrel{w}{\rightarrow}\mathcal F_t$, for all $t$. Moreover, if $\mathbb F$ is left-continuous (which is usually the case, and holds for example  when $X$ is a c\`adl\`ag Hunt Markov process) then $\bigvee_{n}\mathcal F^n_t=\mathcal F_t$ for all $t$.
\end{example}

We provide now another example where $\bigvee_n\mathcal F^n_t$ is itself a limiting $\sigma$-field for $(\mathcal F^n_t)_{n\geq 1}$, for each~$t$.

\begin{example}
Assume that $\mathbb F^n$ is a sequence of filtrations such that for all $t$, the sequence of $\sigma$-fields $(\mathcal F^n_t)_{n\geq 1}$ is increasing for the inclusion. Define $\tilde{\mathcal F}_t=\bigvee_n\mathcal F^n_t$. Then for each $t$, $\mathcal F^n_t\stackrel{w}{\rightarrow}\tilde{\mathcal F}_t$. To see this, fix $t$ and let $X$ be an integrable $\tilde{\mathcal F}_t$~measurable random variable. Then $M_n=E(X\mid\mathcal F^n_t)$ is a closed martingale and the convergence theorem for closed martingales ensures that $M_n$ converges to $X$ in $L^1$, which implies that $E(X\mid\mathcal F^n_t)\stackrel{P}{\rightarrow}X$. Lemma \ref{L:sigma} allows us to conclude.
\end{example}

Checking in practice that X is an $\tilde{\mathbb F}$~semimartingale can be a hard task. In subsequent sections, we replace the strong assumption \textit{$X$ is an $\tilde{\mathbb F}$~semimartingale} by the more natural assumption \textit{$X$ is an $\mathbb F^n$~semimartingale, for each $n$}. Theorem \ref{T:semimg} is very instructive since we see from the proof what goes wrong under this new assumption : the change in the order of limits in (\ref{eq:exchangelim}) cannot be justified anymore and extra integrability conditions will be needed. They are introduced in the next section. 

This assumption arises naturally in filtration expansion theory in the following way. Assume we are given a base filtration $\mathbb F$ and a sequence of processes $N^n$ which converges (in probability for the Skorohod $J_1$ topology) to some process $N$. Let $\mathbb N^n$ and $\mathbb N$ be their natural filtrations and $\mathbb G^n$ (resp. $\mathbb G$) the smallest right-continuous filtration containing $\mathbb{F}$ and to which $N^n$ (resp. $N$) is adapted. Assume that for each $n$, every $\mathbb F$~semimartingale remains a $\mathbb G^n$~semimartingale. Does this property also hold between $\mathbb F$ and $\mathbb G$? In the next section we answer this question under the assumption of weak convergence of the $\sigma$-fields $\mathcal G^n_t$ to $\mathcal G_t$ for each $t$, for a class of $\mathbb F$~semimartingales $X$ satisfying some integrability conditions. If moreover $\mathbb G^n\stackrel{w}{\rightarrow}\mathbb G$, we are able to provide the $\mathbb G$~decomposition of such~$X$. 

\section{Filtration expansion with processes}\label{section3}

In preparation for treating the expansion of filtrations via processes, we need to establish a general result on the convergence of semimartingales, which is perhaps of interest in its own right.  

\subsection{Convergence of semimartingales}

The following theorem is a generalization of the main result in \cite{BarlowProtter}.

\begin{theorem}\label{T:semimg2}
Let $(\mathbb{G}^n)_{n\geq 1}$ be a sequence of right-continuous filtrations and let $\mathbb{G}$ be a filtration such that $\mathcal{G}^n_t\stackrel{w}{\rightarrow}\mathcal{G}_t$ for all $t$. Let $(X^n)_{n\geq 1}$ be a sequence of $\mathbb{G}^n$~semimartingales with canonical decomposition $X^n = X^n_0 + M^n + A^n$. Assume there exists $K>0$ such that for all $n$, 
$$
E(\int_0^T|dA^n_s|) \leq K \qquad \text{and} \qquad E(\sup_{0\leq s\leq T}|M^n_s|)\leq K
$$
Then the following holds.
\begin{itemize}
	\item[(i)] Assume there exists a $\mathbb{G}$~adapted process $X$ such that $E(\sup_{0\leq s\leq T}|X^n_s-X_s|)\rightarrow 0$. Then $X$ is a $\mathbb{G}$~special semimartingale.
	
	\item[(ii)] Moreover, assume $\mathbb G$ is right-continuous and let $X=M+A$ be the canonical decomposition of $X$. Then $M$ is a $\mathbb{G}$ martingale and $\int_0^T|dA_s|$ and $\sup_{0\leq s\leq T} |M_s|$ are integrable.
\end{itemize}
\end{theorem}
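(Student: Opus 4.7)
The plan is to prove that $X$ is a $\mathbb{G}$-quasimartingale and then invoke Rao's theorem (see, e.g., \cite[Chapter III]{Protter:2005}) to deduce that $X$ is a $\mathbb{G}$-special semimartingale; the integrability claims of Part~(ii) will then follow from the Rao decomposition combined with the triangle inequality and a standard argument upgrading a dominated local martingale to a uniformly integrable martingale. The conceptual obstacle is that each $M^n$ is a martingale only with respect to $\mathbb{G}^n$, so its increments do not vanish when conditioned against $\mathcal{G}_{s_i}$. The device I would use is to dualize the quasimartingale variation on a finite partition $\pi = \{0 = s_0 < \cdots < s_m = T\}$:
\[
V^{\mathbb{G}}_\pi(X) := \sum_{i} E\bigl|E\bigl(X_{s_{i+1}} - X_{s_i}\mid\mathcal{G}_{s_i}\bigr)\bigr| = \sup \sum_i E\bigl[Z_i(X_{s_{i+1}} - X_{s_i})\bigr],
\]
the supremum running over all families $(Z_i)$ with $Z_i$ being $\mathcal{G}_{s_i}$-measurable and $|Z_i|\leq 1$. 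For each such family I would introduce the $\mathcal{G}^n_{s_i}$-projections $Z^n_i := E(Z_i\mid\mathcal{G}^n_{s_i})$, which are bounded by one and converge to $Z_i$ in probability by Lemma~\ref{L:sigma}.

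The first step is to establish
\[
\sum_i E\bigl[Z_i(X_{s_{i+1}} - X_{s_i})\bigr] = \lim_{n\to\infty} \sum_i E\bigl[Z^n_i(X^n_{s_{i+1}} - X^n_{s_i})\bigr]
\]
by splitting the difference between the two sides into an $X - X^n$ piece, controlled using $E\sup_{s\leq T}|X_s - X^n_s|\to 0$, and a $Z_i - Z^n_i$ piece, controlled by generalized dominated convergence (the first factor tends to zero in probability while the second, $X^n_{s_{i+1}} - X^n_{s_i}$, is uniformly integrable in $n$ because of the $L^1$-sup convergence). Next I would insert the canonical decomposition $X^n = X^n_0 + M^n + A^n$: the hypothesis $E\sup_{s\leq T}|M^n_s|\leq K$ promotes each $M^n$ to a uniformly integrable $\mathbb{G}^n$-martingale (a local martingale dominated by an integrable random variable is a true martingale), so for every $i$, $E\bigl[Z^n_i(M^n_{s_{i+1}} - M^n_{s_i})\bigr] = 0$ since $Z^n_i$ is bounded and $\mathcal{G}^n_{s_i}$-measurable, and the whole martingale contribution vanishes. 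The finite-variation contribution is controlled term by term by $E\sum_i|A^n_{s_{i+1}} - A^n_{s_i}|\leq E\int_0^T|dA^n_s|\leq K$. Taking the supremum over $(Z_i)$ gives $V^{\mathbb{G}}_\pi(X)\leq K$ uniformly in $\pi$; integrability of each $X_t$ follows from the elementary bound $E\sup|X^n|\leq E|X^n_0| + 2K$ together with $X^n\to X$ in $L^1$-sup. Hence $X$ is a $\mathbb{G}$-quasimartingale, which proves Part~(i).

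For Part~(ii), under the added right-continuity assumption on $\mathbb{G}$, Rao's theorem identifies the canonical decomposition of the quasimartingale as $X = X_0 + M + A$ with $A$ predictable of finite total variation and $E\int_0^T|dA_s|\leq K$. The pointwise bound $\sup_{s\leq T}|M_s|\leq \sup_{s\leq T}|X_s| + |X_0| + \int_0^T|dA_s|$, together with the already-established integrability of $\sup|X|$, yields $E\sup|M|<\infty$; since $M$ is a local martingale dominated by an integrable random variable, it is then a uniformly integrable true martingale. The main obstacle of the proof is therefore concentrated in Part~(i), specifically in the dualization–projection step that lets the $\mathbb{G}^n$-martingale property of $M^n$ neutralize the martingale contribution before the weak convergence of $\sigma$-fields is invoked to pass to the limit.
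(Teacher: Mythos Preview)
Your Part~(i) is essentially the paper's own argument: the paper takes an elementary $\mathbb G$-predictable $H=\sum h_i1_{]t_i,t_{i+1}]}$ with $|h_i|\leq 1$, projects to $h_i^n=E(h_i\mid\mathcal G^n_{t_i})$, uses that $H^n\cdot M^n$ is a $\mathbb G^n$-martingale so that $|E((H^n\cdot X^n)_T)|\leq K$, and then shows $E((H\cdot X)_T-(H^n\cdot X^n)_T)\to 0$. Your ``dualization'' with the $Z_i$ and $Z_i^n$ is exactly this, and your convergence argument (generalized dominated convergence with uniformly integrable increments) is a minor variant of the paper's subsequence-plus-DCT route. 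No difference in substance.

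Your Part~(ii) is genuinely different from the paper's, and more efficient. The paper does \emph{not} read the bound $E\int_0^T|dA_s|\leq K$ off the quasimartingale norm; instead it localises $M$ by a reducing sequence $(\tau_m)$, invokes Lemma~\ref{L:stop} to approximate each $\tau_m$ by $\mathbb G^n$-stopping times $\tau_m^n$, and after a delicate three-term splitting obtains $|E((H\cdot A)_{\tau_m})|\leq K+2C$ where $C=\lim_{\eta\to 0}\limsup_n E(\sup_{s\leq t\leq s+\eta}|X^n_t-X^n_s|)$. Your shortcut avoids all of this and yields the sharper constant $K$. What you lose is self-containment: the assertion that the predictable finite-variation part of a quasimartingale satisfies $E\int_0^T|dA_s|\leq \sup_\pi V^{\mathbb G}_\pi(X)$ is correct, but it is not the statement of Rao's theorem as given in \cite[Chapter~III]{Protter:2005}; it follows from Rao together with the Doob--Meyer decomposition of each positive supermartingale in the Rao splitting and the inequality $E(B_T)\leq E(Y_0-Y_T)=\operatorname{Var}(Y)$ for a positive supermartingale $Y=M^Y-B$. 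You should either cite this consequence explicitly (e.g.\ Dellacherie--Meyer) or insert the two-line argument; once that is done your Part~(ii) stands, and the paper's use of Lemma~\ref{L:stop} becomes unnecessary for this theorem.
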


\begin{proof}\textbf{Part (i).} The idea of the proof of Part (i) is similar to the one in \cite{BarlowProtter}. First, $X$ is c\`adl\`ag since it is the a.s.~uniform limit of a subsequence of the c\`adl\`ag processes $(X^n)_{n\geq 1}$. Also, since $||X^n_0-X_0||_1\rightarrow 0$, we can take w.l.o.g $X_0^n=X_0=0$, and we do~so. The integrability assumptions guarantee that $E(\sup_s|X^n_s|)\leq 2K$ and up to replacing $K$ by $2K$, we assume that $E(\sup_s|M^n_s|)\leq K$, $E(\int_0^T|dA^n_s|)\leq K$ and $E(\sup_s|X^n_s|)\leq K$. Then $E(\sup_s|X_s|)\leq E(\sup_s|X_s-X^n_s|)+K$ and by taking limits $E(\sup_s|X_s|)\leq K$.

Let $H$ be a $\mathbb{G}$~predictable elementary process of the form $H_t=\sum_{i=1}^kh_i1_{]t_i,t_{i+1}]}(t)$, where $h_i$ is a $\mathcal{G}_{t_i}$~measurable random variable such that $|h_i|\leq 1$ and $t_1<\ldots <t_k<t_{k+1}=T$. Define now $H^n_t=\sum_{i=1}^kh^n_i1_{]t_i,t_{i+1}]}(t)$, where $h^n_i=E(h_i\mid\mathcal{G}^n_{t_i})$. Then $h^n_i$ is a $\mathcal{G}^n_{t_i}$~measurable random variable satisfying $|h^n_i|\leq 1$, hence $H^n$ is a bounded $\mathbb{G}^n$~predictable elementary process. It follows that $H^n\cdot M^n$ is a $\mathbb{G}^n$~martingale and for each $n$,
$$
|E((H^n\cdot X^n)_T)| \leq |E(\int_0^TH^n_sdA^n_s)|\leq E(\int_0^{T}|dA^n_s|)\leq K
$$
Therefore, for each $n$,
\begin{equation}\label{eq:lim}
|E((H\cdot X)_T)|\leq |E((H \cdot X)_T-(H^n\cdot X^n)_T)|+K
\end{equation}
Since $h_i$ is $\mathcal{G}_{t_i}$~measurable and $\mathcal{G}_t^n\stackrel{w}{\rightarrow}\mathcal{G}_t$ for all $t$, $h^n_i\stackrel{P}{\rightarrow}h_i$ for all $1\leq i\leq k$. Since the set $\{1,\ldots,k\}$ is finite, successive extractions allow us to find a subsequence $\psi(n)$ (independent from $i$) such that for each $1\leq i\leq k$, $h^{\psi(n)}_i$ converges a.s.~to $h_i$. So up to working with the $\mathbb{G}^{\psi(n)}$ predictable elementary processes $H^{\psi(n)}$ and the stochastic integrals $H^{\psi(n)}\cdot X^{\psi(n)}$ in (\ref{eq:lim}), we can assume that $h^n_i$ converges a.s.~to $h_i$, for each $1\leq i\leq k$.

Now, $|E((H\cdot X)_T-(H^n\cdot X^n)_T)|\leq \sum_{i=1}^kE(|h_iY_i-h^n_iY^n_i|)$ where $Y_i=X_{t_{i+1}}-X_{t_i}$ and $Y^n_i=X^n_{t_{i+1}}-X^n_{t_i}$. Each term in the sum can be bounded as follows.
\begin{align*}
E(|h_iY_i-&h^n_iY^n_i|)\leq E(|Y^n_i(h^n_i-h_i)|)+E(|h_i(Y^n_i-Y_i)|)\\
&\leq 2E(\sup_s|X^n_s| |h^n_i-h_i|) + E(|Y^n_i-Y_i|)\\
&\leq 2E(\sup_s|X^n_s-X_s| |h^n_i-h_i|) + 2E(\sup_s|X_s| |h^n_i-h_i|) + 2E(\sup_s|X^n_s-X_s|)\\
&\leq 6E(\sup_s|X^n_s-X_s|) + 2E(\sup_s|X_s| |h^n_i-h_i|)
\end{align*}
Since $\sup_s|X_s| |h^n_i-h_i|$ converges a.s to zero and that for all $n$, $|h^n_i|\leq 1$, hence $\sup_s|X_s| |h^n_i-h_i|\leq 2\sup_s|X_s|$ and $\sup_s|X_s|\in L^1$, the Dominated Convergence Theorem implies that $E(\sup_s|X_s| |h^n_i-h_i|)\rightarrow 0$. Since by assumption $E(\sup_s|X^n_s-X_s|)\rightarrow 0$, it follows that $|E((H\cdot X)_T-(H^n\cdot X^n)_T)|$ converges to $0$. Letting $n$ tend to infinity in (\ref{eq:lim}) gives $|E((H\cdot X)_T)| \leq K$. So $X$ is a $\mathbb{G}$~quasimartingale, hence a $\mathbb{G}$~special semimartingale. Therefore $X$ has a $\mathbb G$ canonical decomposition $X=M+A$ where M is a $\mathbb G$~local martingale and $A$ is a $\mathbb G$~predictable finite variation process.

\textbf{Part(ii).} Let $(\tau_m)_{m\geq 1}$ be a sequence of bounded $\mathbb G$~stopping times that reduces $M$. Since for all $t$, $\mathcal{G}^m_t\stackrel{w}{\rightarrow}\mathcal{G}_t$, it follows from Lemma \ref{L:stop} that for each $m$ there exist a function $\phi_m$ strictly increasing and a sequence $(\tau_m^n)_{n\geq 1}$ such that $(\tau_m^{\phi_m(n)})_{n\geq 1}$ converges in probability to $\tau_m$ and $\tau_m^{\phi_m(n)}$ are bounded $\mathbb G^{\phi_m(n)}$ stopping times. We can extract a subsequence $(\tau_m^{\phi_m(\psi_m(n))})_{n\geq 1}$ converging a.s.~to $\tau_m$. In order to simplify the notation, fix $m\geq 1$ and up to working with $\tilde{\mathbb G}^n=\mathbb G^{\phi_m(\psi_m(n))}$ instead of $\mathbb G^n$ (which satisfies the same assumptions), take $\Phi_m := \phi_m \circ \psi_m$ to be the identity. Let $H$ be a $\mathbb G$~elementary predictable process as defined in Part (i). Since $\tau_m$ reduces $M$, $E((H\cdot A)_{\tau_m}) = E((H\cdot X)_{\tau_m})$. We can write
$$
E((H\cdot A)_{\tau_m})=E\big((H\cdot X)_{\tau_m}-(H^n\cdot X^n)_{\tau_m}\big) + E\big((H^n\cdot X^n)_{\tau_m}-(H^n\cdot X^n)_{\tau_m^n}\big) + E\big((H^n\cdot X^n)_{\tau_m^n}\big)
$$

We start with the third term. Since $H^n\cdot M^n$ is a $\mathbb G^n$~martingale and $\tau_m^n$ is a bounded $\mathbb G^n$~stopping time, it follows from Doob's optional sampling theorem that $E((H^n\cdot X^n)_{\tau_m^n}) = E((H^n\cdot A^n)_{\tau_m^n})$, hence $|E((H^n\cdot X^n)_{\tau_m^n})|\leq E(\int_0^{\tau_m}|dA^n_s|)\leq K$.

We focus now on the first term. Let $Y^i_s=X_s-X_{t_i}$ and $Y^{i,n}_s=X^n_s-X^n_{t_i}$.
\begin{align*}
E_1 &:= |E\big((H\cdot X)_{\tau_m}-(H^n\cdot X^n)_{\tau_m}\big)|\leq E\Big(\sup_{0\leq s\leq T}\big|(H\cdot X)_{s}-(H^n\cdot X^n)_{s}\big|\Big)\\
&\leq E\Big(\sum_{i=1}^k\sup_{t_i<s\leq t_{i+1}}\big|h_iY^i_s-h_i^nY^{i,n}_s\big|\Big)\\
&\leq \sum_{i=1}^k E\Big(\sup_{t_i<s\leq t_{i+1}} |Y^{i,n}_s||h^n_i-h_i| + \sup_{t_i<s\leq t_{i+1}} |h_i||Y^{i,n}_s-Y^i_s|\Big)
\end{align*}
Since $|h^i|\leq 1$, $|Y^{i,n}_s|\leq 2 \sup_{u}|X^n_u|$ and $|Y^{i,n}_s-Y^i_s|\leq 2 \sup_{u}|X^n_u-X_u|$, it follows that
\begin{align*}
E_1 &\leq 2 \sum_{i=1}^k \big\{ E\big(\sup_{0\leq u\leq T} |X^n_u||h^n_i-h_i|\big) + E\big(\sup_{0\leq u\leq T} |X^n_u-X_u|\big)\big\}\\
&\leq 6kE(\sup_{0\leq s\leq T}|X^n_s-X_s|)+2\sum_{i=1}^k\{E\big(\sup_{0\leq s\leq T}|X_s||h^n_i-h_i|\big)\}
\end{align*}

We study now the second term $E_2:=E\big((H^n\cdot X^n)_{\tau_m}-(H^n\cdot X^n)_{\tau_m^n}\big)$. Let $0<\eta<\min_i |t_{i+1}-t_i|$, and define $Y^n=H^n\cdot X^n$. Write now
$$
E\big(|Y^n_{\tau_m}-Y^n_{\tau^n_m}|\big)=E\big(|Y^n_{\tau_m}-Y^n_{\tau^n_m}|1_{\{|\tau_m-\tau_m^n|\leq \eta\}}\big)+E\big(|Y^n_{\tau_m}-Y^n_{\tau^n_m}|1_{\{|\tau_m-\tau_m^n|> \eta\}}\big)=: e_1 + e_2
$$
We study each of the two terms separately. We start with $e_2$.
\begin{align*}
e_2&\leq E\big((|Y^n_{\tau_m}|+|Y^n_{\tau^n_m}|)1_{\{|\tau_m-\tau_m^n|> \eta\}}\big)\leq 2 E\Big(\sum_{i=1}^k\sup_{t_i<s\leq t_{i+1}}|X^n_s-X^n_{t_i}|1_{\{|\tau_m-\tau_m^n|> \eta\}}\Big)\\
&\leq 4 k E\Big(\sup_{0\leq u\leq T}|X^n_u|1_{\{|\tau_m-\tau_m^n|> \eta\}}\Big)\\
&\leq 4 k E\Big(\sup_{0\leq u\leq T}|X^n_u-X_u|\Big)+4 k E\Big(\sup_{0\leq u\leq T}|X_u|1_{\{|\tau_m-\tau_m^n|> \eta\}}\Big)
\end{align*}
We study now $e_1$. On $\{|\tau_m-\tau^n_m|\leq \eta\}$ and since $\eta<\min_i|t_{i+1}-t_i|$, we have $|Y^n_{\tau_m}-Y^n_{\tau^n_m}|\leq 2\sup_{s\leq t\leq s+\eta}|X^n_t-X^n_s|$. In fact, one of the two following cases is possible for $\tau_m$ and $\tau_m^n$. Either they are both in the same interval $(t_i, t_{i+1}]$, in which case, $|Y^n_{\tau_m}-Y^n_{\tau^n_m}|=|h^n_i(X^n_{\tau_m}-X^n_{\tau_m^n})|\leq \sup_{s\leq t\leq s+\eta}|X^n_t-X^n_s|$, or they are in two consecutive intervals. For the second case, take for example $t_{i-1}<\tau_m\leq t_i<\tau_m^n\leq t_{i+1}$, then 
\begin{align*}
|Y^n_{\tau_m}-Y^n_{\tau^n_m}|&=|h^n_{i-1}(X^n_{\tau_m}-X^n_{t_{i-1}})-h^n_{i-1}(X^n_{t_i}-X^n_{t_{i-1}})-h^n_i(X^n_{\tau_m^n}-X^n_{t_i})|\\
&=|h^n_{i-1}(X^n_{\tau_m}-X^n_{t_i})-h^n_i(X^n_{\tau_m^n}-X^n_{t_i})|\leq |X^n_{\tau_m}-X^n_{t_i}|+|X^n_{\tau_m^n}-X^n_{t_i}|\\
&\leq 2\sup_{s\leq t\leq s+\eta}|X^n_t-X^n_s|
\end{align*}
The case $t_{i-1}<\tau_m^n\leq t_i<\tau_m\leq t_{i+1}$ is similar. Hence
$$
e_1\leq 2 E(\sup_{s\leq t\leq s+\eta}|X^n_t-X^n_s|1_{\{|\tau_m-\tau_m^n|\leq \eta\}})\leq 2 E(\sup_{s\leq t\leq s+\eta}|X^n_t-X^n_s|)
$$
Putting all this together yields for each $0<\eta<\min_{i}|t_{i+1}-t_i|$ and each $n$
\begin{align*}
|E(H\cdot A)_{\tau_m}|\leq K + &2 E(\sup_{s\leq t\leq s+\eta}|X^n_t-X^n_s|) + 2\sum_{i=1}^kE(\sup_{0\leq s\leq T}|X_s||h^n_i-h_i|)\\
&+ 4kE(\sup_{0\leq s\leq T}|X_s|1_{\{|\tau_m-\tau_m^n|>\eta\}}) + 10 k E\big(\sup_{0\leq u\leq T} |X^n_u-X_u|\big)
\end{align*}
Getting back to the general case, we obtain for each $m\geq 1$ and each $n\geq 1$,
\begin{align*}
|E(H\cdot A)_{\tau_m}|\leq K + &2 E(\sup_{s\leq t\leq s+\eta}|X^{\Phi_m(n)}_t-X^{\Phi_m(n)}_s|) + 2\sum_{i=1}^kE(\sup_{0\leq s\leq T}|X_s||h^{\Phi_m(n)}_i-h_i|)\\
&+ 4kE(\sup_{0\leq s\leq T}|X_s|1_{\{|\tau_m-\tau_m^{\Phi_m(n)}|>\eta\}}) + 10 k E\big(\sup_{0\leq u\leq T} |X^{\Phi_m(n)}_u-X_u|\big)
\end{align*}
As in the proof of Part (i), successive extractions allow us to find $\lambda_m(n)$ (independent from $i$) such that for all $1\leq i\leq k$, $h^{\lambda_m(n)}_i$ converges a.s.~to $h_i$. Letting $n$ go to infinity in
\begin{align*}
|E(H\cdot A)_{\tau_m}|\leq K + &2 E(\sup_{s\leq t\leq s+\eta}|X^{\lambda_m(n)}_t-X^{\lambda_m(n)}_s|) + 2\sum_{i=1}^kE(\sup_{0\leq s\leq T}|X_s||h^{\lambda_m(n)}_i-h_i|)\\
&+ 4kE(\sup_{0\leq s\leq T}|X_s|1_{\{|\tau_m-\tau_m^{\lambda_m(n)}|>\eta\}}) + 10 k E\big(\sup_{0\leq u\leq T} |X^{\lambda_m(n)}_u-X_u|\big)
\end{align*}
gives the estimate
$$
|E(H\cdot A)_{\tau_m}|\leq K + 2 \limsup_{n\rightarrow\infty}E(\sup_{s\leq t\leq s+\eta}|X^n_t-X^n_s|)
$$
Let $\lim_{\eta\rightarrow 0}\limsup_{n\rightarrow\infty}E(\sup_{s\leq t\leq s+\eta}|X^n_s-X^n_t|)=C$. Since $E(\sup_{s\leq t\leq s+\eta}|X^n_t-X^n_s|)\leq 2E(\sup_u|X^n_u|)\leq 2(E(\sup_u|M^n_u|+\int_0^T|dA^n_s|))\leq 4K$, $C<\infty$. Now letting $\eta$ go to zero yields finally $|E(H\cdot A)_{\tau_m}|\leq K+2C$, for each $m$. Thus $E(\int_0^{\tau_m}|dA_s|)\leq K+2C$, for each~$m$ and hence $E(\int_0^T|dA_s|)\leq K+2C$.

Now, $M=X-A=(X-X^n)+M^n+A^n-A$, and so
$$
\sup_{0\leq s\leq T}|M_s|\leq \sup_{0\leq s\leq T}|X_s-X^n_s|+\sup_{0\leq s\leq T}|M^n|+\int_0^T|dA^n_s|+\int_0^T|dA_s|
$$
Thus $E(\sup_{0\leq s\leq T}|M_s|)\leq 3K + 2C$ and $M$ is a $\mathbb G$~martingale.
\end{proof}

Once one obtains that $X$ is a $\mathbb G$~special semimartingale, one can be interested in characterizing the martingale $M$ and the finite variation predictable process $A$ in terms of the processes $M^n$ and $A^n$. M\'emin (Theorem 11 in \cite{Memin}) achieved this under ``extended convergence.'' Recall that $(X^n, \mathbb G^n)$ converges to $(X, \mathbb G)$ in the extended sense if for every $G\in \mathbb G_T$, the sequence of c\`adl\`ag processes $(X^n_t, E(1_G\mid\mathcal G^n_t))_{0\leq t\leq T}$ converges in probability under the Skorohod $J_1$ topology to $(X_t, E(1_G\mid\mathcal G_t))_{0\leq t\leq T}$. The author proves the following theorem. We refer to \cite{Memin} for a proof. In the theorem below, $\mathbb G^n$ and $\mathbb G$ are right-continuous filtrations.

\begin{theorem}\label{T:memin}
Let $(X^n)_{n\geq 1}$ be a sequence of $\mathbb G^n$~special semimartingales with canonical decompositions $X^n=M^n+A^n$ where $M^n$ is a $\mathbb G^n$~martingale and $A^n$ is a $\mathbb G^n$~predictable finite variation process. We suppose that the sequence $([X^n, X^n]_T^{\frac{1}{2}})_{n\geq 1}$ is uniformly integrable and that the sequence $(V(A^n)_T)_{n\geq 1}$ (where $V$ denotes the variation process) of real random variables is tight in $\mathbb R$. Let $X$ be a $\mathbb G$~quasi-left continuous special semimartingale with a canonical decomposition $X=M+A$ such that $([X, X]_T^{\frac{1}{2}})<\infty$.

If the extended convergence $(X^n, \mathbb G^n) \rightarrow (X, \mathbb G)$ holds, then $(X^n, M^n, A^n)$ converges in probability under the Skorohod $J_1$ topology to $(X, M, A)$.
\end{theorem}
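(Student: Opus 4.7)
The approach I would take is the classical tightness-plus-uniqueness-of-limit argument, adapted to accommodate the convergence of filtrations. The plan is to establish tightness of the triple $(X^n, M^n, A^n)_{n\geq 1}$ in $\mathbb{D}([0,T], \mathbb{R}^3)$ under the Skorohod $J_1$ topology, to identify every subsequential limit as the canonical decomposition of $X$ using the extended convergence hypothesis, and finally to upgrade from convergence in law to convergence in probability.

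First, I would treat tightness of $(M^n)$ and $(A^n)$ separately. The uniform integrability of $[X^n, X^n]_T^{1/2}$ yields, via the Burkholder-Davis-Gundy inequality, uniform integrability of $\sup_{0\leq t\leq T} |M^n_t|$; combined with a Rebolledo-type criterion based on the predictable quadratic variation, this gives $J_1$-tightness of $(M^n)_{n\geq 1}$. For $(A^n)_{n\geq 1}$, the tightness in $\mathbb{R}$ of the real-valued random variables $V(A^n)_T$ controls the total variation norm uniformly; here the quasi-left continuity of $X$ is invoked to rule out accumulation of predictable jumps in $(A^n)$, thereby upgrading tightness in the Meyer-Zheng sense to Skorohod $J_1$-tightness. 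Joint tightness of $(X^n, M^n, A^n)$ is then immediate since $X^n$ already converges to $X$.

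Next, along any weakly convergent subsequence with limit $(X, \tilde M, \tilde A)$, passing to the limit in $X^n = M^n + A^n$ gives $X = \tilde M + \tilde A$. The critical task is to show that $\tilde M$ is a $\mathbb{G}$~martingale and that $\tilde A$ is $\mathbb{G}$~predictable of finite variation, which by uniqueness of the canonical decomposition of the special semimartingale $X$ forces $(\tilde M, \tilde A) = (M, A)$. For the martingale identification I would fix $s<t$ and $G \in \mathcal{G}_s$ and pass to the limit in $E[1_G (M^n_t - M^n_s)] = 0$; the extended convergence hypothesis is precisely what is needed here, since it provides joint Skorohod convergence of $(X^n_\cdot, E(1_G \mid \mathcal{G}^n_\cdot))$ to $(X_\cdot, E(1_G \mid \mathcal{G}_\cdot))$, and the uniform integrability of $M^n_t$ coming from BDG applied to $[X^n, X^n]_T^{1/2}$ justifies taking the limit under the expectation. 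Predictability of $\tilde A$ follows from predictability of each $A^n$ combined with the absence of fixed times of discontinuity guaranteed by quasi-left continuity of $X$.

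The step I expect to be most delicate is the identification of $\tilde M$ as a $\mathbb{G}$~martingale: the extended convergence is essential precisely because mere pointwise weak convergence $\mathcal{G}^n_t \stackrel{w}{\rightarrow} \mathcal{G}_t$ in the sense of Definition \ref{D:sigmafield} would not suffice to pass to the limit in conditional expectations indexed jointly in $t$ and in the sample path. Once the subsequential limit is pinned down uniquely, any subsequence of $(X^n, M^n, A^n)$ has a further subsequence converging in law to $(X, M, A)$, so the full sequence converges in law; the upgrade to convergence in probability under the Skorohod $J_1$ topology follows from the joint convergence of $(X^n, M^n, A^n)$ with the $\mathbb{G}$~adapted limit $(X, M, A)$ to the ``diagonal'', which is exactly what the extended convergence together with uniqueness of the canonical decomposition provides.
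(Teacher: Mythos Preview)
The paper does not actually prove this theorem; it is quoted from M\'emin (Theorem~11 in \cite{Memin}) and the authors explicitly write ``We refer to \cite{Memin} for a proof.'' So there is no in-paper argument against which to compare your attempt.

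That said, your outline is essentially the strategy M\'emin himself follows: tightness of the triple $(X^n,M^n,A^n)$ in $\mathbb D$, identification of any subsequential limit via the extended convergence, and uniqueness of the canonical decomposition of the quasi-left continuous special semimartingale $X$. Your use of BDG to get uniform integrability of $\sup_t|M^n_t|$ from that of $[X^n,X^n]_T^{1/2}$, the role of quasi-left continuity in controlling predictable jumps of $A^n$, and the use of extended convergence to pass the martingale identity $E[1_G(M^n_t-M^n_s)]=0$ to the limit, are the correct key ingredients. The one place your sketch is thin is the final upgrade from convergence in law to convergence in probability: this genuinely requires working with the joint law of $(X^n,M^n,A^n,X)$ on the original probability space and using $X^n\to X$ in probability (which is part of extended convergence) to force any limit point to sit on the diagonal; your ``diagonal'' remark gestures at this but would need to be made precise in a full proof.
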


In a filtration expansion setting, the sequence $X^n$ is constant and equal to some semimartingale $X$ of the base filtration. In this case the extended convergence assumption in Theorem \ref{T:memin} reduces to the weak convergence of the filtrations. We can deduce the following corollary from Theorems \ref{T:semimg2} and \ref{T:memin}.

\begin{corollary}\label{cor:semimg}
Let $(\mathbb G^n)_{n\geq 1}$ be a sequence of right-continuous filtrations and let $\mathbb G$ be a filtration such that $\mathcal G^n_t\stackrel{w}{\rightarrow}\mathcal G_t$ for all $t$. Let $X$ be a stochastic process such that for each $n$, $X$ is a $\mathbb G^n$~semimartingale with canonical decomposition $X = M^n + A^n$ such that there exists $K>0$, $E(\int_0^T|dA^n_s|) \leq K$ and $E(\sup_{0\leq s\leq T}|M^n_s|)\leq K$ for all $n$. Then
\begin{itemize}
	\item[(i)] If $X$ is $\mathbb G$~adapted, then $X$ is a $\mathbb G$~special semimartingale. 

	\item[(ii)] Assume moreover that $\mathbb G$ is right-continuous and let $X=M+A$ be the canonical decomposition of $X$. Then $M$ is a $\mathbb G$ martingale and $\sup_{0\leq s\leq T} |M_s|$ and $\int_0^T|dA_s|$ are integrable. 

	\item[(iii)] Furthermore, assume that $X$ is $\mathbb G$~quasi-left continuous and $\mathbb G^n \stackrel{w}{\rightarrow} \mathbb G$. Then $(M^n, A^n)$ converges in probability under the Skorohod $J_1$ topology to $(M,A)$.
\end{itemize}
\end{corollary}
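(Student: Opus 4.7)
The plan is to derive all three parts from Theorems~\ref{T:semimg2} and~\ref{T:memin} applied to the constant sequence $X^n := X$. With this choice the hypothesis $E(\sup_{0\leq s\leq T}|X^n_s - X_s|) \rightarrow 0$ of Theorem~\ref{T:semimg2} holds trivially, while the uniform $L^1$-bounds on $\int_0^T|dA^n_s|$ and $\sup_s|M^n_s|$ are exactly our standing assumption. Since $X$ is $\mathbb G^n$-adapted (being a $\mathbb G^n$-semimartingale) and $\mathbb G$-adapted by hypothesis, parts (i) and (ii) follow immediately from the corresponding parts of Theorem~\ref{T:semimg2}.

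For part (iii), the plan is to invoke Theorem~\ref{T:memin}, which requires checking four hypotheses: (a) uniform integrability of $[X^n,X^n]_T^{1/2}$; (b) tightness of $V(A^n)_T$; (c) $\mathbb G$-quasi-left continuity of $X$ with $[X,X]_T^{1/2}<\infty$; and (d) extended convergence $(X^n,\mathbb G^n)\rightarrow(X,\mathbb G)$. Tightness in (b) follows from Markov's inequality: $P(V(A^n)_T>R)\leq K/R$ uniformly in $n$. The quasi-left-continuity in (c) is assumed. For the quadratic variation bounds, I would apply Davis' form of the Burkholder-Davis-Gundy inequality to $E(\sup_s|M^n_s|)\leq K$ to get $E([M^n,M^n]_T^{1/2})\leq cK$, combine with $E([A^n,A^n]_T^{1/2})\leq E(V(A^n)_T)\leq K$, and use the Kunita-Watanabe bound $[X,X]_T^{1/2}\leq [M^n,M^n]_T^{1/2}+[A^n,A^n]_T^{1/2}$; since $[X^n,X^n]_T^{1/2}=[X,X]_T^{1/2}$ is constant in $n$, this yields both (a) and the integrability part of (c).

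For the extended convergence in (d), I must show that for every $G\in\mathcal G_T$ the pair $(X^n_\cdot, E(1_G\mid\mathcal G^n_\cdot))$ converges in probability under the Skorohod $J_1$ topology to $(X_\cdot, E(1_G\mid\mathcal G_\cdot))$. The first coordinate is literally constant in $n$; the second converges by the hypothesis $\mathbb G^n\stackrel{w}{\rightarrow}\mathbb G$. By the standard subsequence characterization of convergence in probability, any subsequence admits a further a.s.-convergent subsequence realized by time changes $\lambda_n$ with $||\lambda_n-Id||_\infty\rightarrow 0$, and these same time changes give the joint $J_1$ convergence of the pair since the first coordinate is held fixed throughout. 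Once all four hypotheses are verified, Theorem~\ref{T:memin} delivers the claimed convergence of $(M^n, A^n)$ to $(M,A)$. The main technical subtlety lies precisely in this joint Skorohod convergence step, since convergence in $\mathbb D(\mathbb R^2)$ is strictly stronger than componentwise convergence when jumps are present; however, the constancy of $X^n$ in $n$ makes the reduction essentially routine.
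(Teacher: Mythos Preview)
Your approach is essentially the paper's own: take the constant sequence $X^n\equiv X$, invoke Theorem~\ref{T:semimg2} for (i)--(ii), and invoke Theorem~\ref{T:memin} for (iii). Your verification of tightness of $V(A^n)_T$ and uniform integrability of $[X^n,X^n]_T^{1/2}$ is correct and in fact more careful than the paper's (the paper simply asserts $[X,X]_T\in L^1$; your BDG/Kunita--Watanabe route to $E([X,X]_T^{1/2})<\infty$ is cleaner and is exactly what is needed since the sequence is constant).

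There is, however, a genuine gap in your extended-convergence argument. You claim that the time changes $\lambda_n$ realizing the a.s.\ $J_1$ convergence of $E(1_G\mid\mathcal G^n_\cdot)$ to $E(1_G\mid\mathcal G_\cdot)$ automatically give joint $J_1$ convergence of the pair ``since the first coordinate is held fixed throughout.'' This is false in general: $\|\lambda_n-\mathrm{Id}\|_\infty\to 0$ does \emph{not} force $\|X\circ\lambda_n-X\|_\infty\to 0$ when $X$ has jumps, because $\lambda_n$ may shift the jump times of $X$. Concretely, if $X=1_{[1/2,1]}$ and the second coordinate jumps at $1/2+1/n$, the time change matching the second coordinate produces a pair with two separated jumps that cannot $J_1$-converge to the limit's single joint jump. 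So constancy of the first component in $n$ is not enough; you would need either continuity of $X$, or a matching-of-jumps argument between $X$ and the limiting martingale $E(1_G\mid\mathcal G_\cdot)$, which you have not supplied.

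The paper itself does not prove this step either: it simply asserts, just before the corollary, that when $X^n\equiv X$ the extended-convergence hypothesis of Theorem~\ref{T:memin} ``reduces to the weak convergence of the filtrations,'' implicitly deferring to M\'emin's framework. So your proposal is aligned with the paper in spirit, but the specific time-change justification you wrote down does not stand on its own.
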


\begin{proof}
The sequence $X^n=X$ clearly satisfies the assumptions of Theorem \ref{T:semimg2}, and the two first claims follow. For the last claim, notice that $[X,X]_T \in L^1$, so $\sqrt{[X,X]_T}\in L^1$ and hence $(\sqrt{[X,X]_t})_{0\leq t\leq T}$ is a uniformly integrable family of random variables. The tightness of the sequence of random variables $(V(A^n)_T)_{n\geq 1}$ follows from $E(\int_0^T|dA^n_s|) \leq K$ for any $n$ and some $K$ independent from~$n$.
\end{proof}

\subsection{Applications to filtration expansions}

We provide in this subsection a first application to the initial and progressive filtration expansions with a random variable and a general theorem on the progressive expansion with a process. We assume in the sequel that a right-continuous filtration $\mathbb F$ is given.

\subsubsection{Initial and progressive filtration expansions with a random variable}

Assume that $\mathbb F$ is the natural filtration of some c\`adl\`ag process. Let $\tau$ be a random variable and $\mathbb H$ and $\mathbb G$ the initial and progressive expansions of $\mathbb F$ with~$\tau$. In this subsection, the filtration $\mathbb G$ is considered only when $\tau$ is non negative. It is proved in \cite{KLP} that if $\tau$ satisfies Jacod's criterion i.e. if there exists a $\sigma$-finite measure $\eta$ on $\mathcal{B}(\mathbb R)$ such that 
$$
P(\tau \in \cdot\mid\mathcal F_t)(\omega) \ll \eta(.) \quad \text{a.s.}
$$
then every $\mathbb F$~semimartingale remains an $\mathbb H$ and $\mathbb G$~semimartingale. That it is an $\mathbb H$~semimartingale is due to Jacod \cite{Jacod:1987}. That it is also a $\mathbb G$~semimartingale  follows from Stricker's theorem. Its $\mathbb G$~decomposition is obtained in \cite{KLP} and this relies on the fact that these two filtrations \textit{coincide} after $\tau$. We provide now a similar but partial result for a random variable $\tau$ which may not satisfy Jacod's criterion. Assume there exists a sequence of random times $(\tau_n)_{n\geq 0}$ converging in probability to $\tau$ and let $\mathbb H^n$ and $\mathbb G^n$ be the initial and progressive expansions of $\mathbb F$ with $\tau_n$. The following holds.

\begin{theorem}\label{T:ini}
Let $M$ be an $\mathbb F$~martingale such that $\sup_{0 \leq t\leq T}|M_t|$ is integrable. Assume there exists an $\mathbb H^n$ predictable finite variation process $A^n$ such that $M-A^n$ is an $\mathbb H^n$ martingale. If there exists $K$ such that $E(\int_0^T|dA^n_s|)\leq K$ for all $n$, then $M$ is an $\mathbb H$ and $\mathbb G$ semimartingale. 
\end{theorem}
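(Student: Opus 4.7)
The plan is to apply Corollary~\ref{cor:semimg}(i) twice---once with the sequence $(\mathbb H^n)_{n\geq 1}$ for the initial expansion conclusion, and once with $(\mathbb G^n)_{n\geq 1}$ for the progressive expansion conclusion---in both cases taking $X^n \equiv X \equiv M$ and exhibiting a canonical decomposition of $M$ whose martingale and finite variation parts satisfy the uniform integrability bounds required by the corollary.

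First I would verify pointwise weak convergence of the $\sigma$-fields. Since $\tau_n \stackrel{P}{\to} \tau$, viewing each $\tau_n$ and $\tau$ as a c\`adl\`ag process constant in $t$ (hence without fixed times of discontinuity) and applying Lemma~\ref{L:conv} gives $\sigma(\tau_n) \stackrel{w}{\to} \sigma(\tau)$. Combined with the trivial weak convergence $\mathcal F_t \stackrel{w}{\to} \mathcal F_t$ and Lemma~\ref{L:vee}, this yields $\mathcal H^n_t = \mathcal F_t \vee \sigma(\tau_n) \stackrel{w}{\to} \mathcal F_t \vee \sigma(\tau) = \mathcal H_t$ for each $t$. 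The same reasoning applied to $\tau_n \wedge t \stackrel{P}{\to} \tau \wedge t$, together with Lemma~\ref{L:subset} to absorb the right-continuous enlargement $\mathcal G^n_t \supset \mathcal F_t \vee \sigma(\tau_n \wedge t)$, gives $\mathcal G^n_t \stackrel{w}{\to} \mathcal G_t$.

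For the initial expansion case the hypothesis supplies the $\mathbb H^n$-special semimartingale decomposition $M = (M - A^n) + A^n$ directly, with the uniform bounds $E(\int_0^T |dA^n_s|) \leq K$ (assumed) and $E(\sup_{0\leq s\leq T} |M_s - A^n_s|) \leq E(\sup_{0\leq s\leq T}|M_s|) + K$. Corollary~\ref{cor:semimg}(i) then immediately yields that $M$ is an $\mathbb H$-special semimartingale.

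For the progressive expansion case we must construct the $\mathbb G^n$-canonical decomposition ourselves, since $A^n$ need not be $\mathbb G^n$-predictable. Let $\tilde A^n$ denote the $\mathbb G^n$-predictable dual projection of $A^n$, well defined because $A^n$ has integrable variation. For any bounded $\mathbb G^n$-stopping time $\sigma$ (which is also an $\mathbb H^n$-stopping time since $\mathbb G^n \subset \mathbb H^n$), Doob's optional sampling applied to the $\mathbb H^n$-martingale $M - A^n$ gives $E(M_\sigma) - E(M_0) = E(A^n_\sigma) - E(A^n_0)$, while the defining property of the dual predictable projection applied to the $\mathbb G^n$-predictable process $1_{\lc 0,\sigma \rc}$ gives $E(A^n_\sigma) = E(\tilde A^n_\sigma)$; combining these identities shows that $M - \tilde A^n$ is a $\mathbb G^n$-martingale. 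The standard inequality $E(\int_0^T |d\tilde A^n_s|) \leq E(\int_0^T |dA^n_s|) \leq K$ (obtained by applying the dual projection to the Jordan parts of $A^n$), together with $E(\sup_{0\leq s\leq T}|M_s - \tilde A^n_s|) \leq E(\sup_{0\leq s\leq T}|M_s|) + K$, verifies the hypotheses of Corollary~\ref{cor:semimg}(i) in the $(\mathbb G^n, \mathbb G)$ setting and yields that $M$ is a $\mathbb G$-semimartingale. The main obstacle is precisely this construction of a $\mathbb G^n$-canonical decomposition from the given $\mathbb H^n$-decomposition with control of its expected total variation; the dual predictable projection is the correct device, and everything else is routine given the machinery of Section~\ref{section3}.
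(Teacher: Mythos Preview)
Your argument for the $\mathbb H$ conclusion is essentially the paper's own: establish pointwise weak convergence $\mathcal H^n_t\stackrel{w}{\to}\mathcal H_t$ and invoke Corollary~\ref{cor:semimg}(i) with $M^n=M-A^n$ and the bound $E(\sup_s|M^n_s|)\leq E(\sup_s|M_s|)+K$. One small inaccuracy: you write $\mathcal H^n_t=\mathcal F_t\vee\sigma(\tau_n)$ and $\mathcal H_t=\mathcal F_t\vee\sigma(\tau)$, but these are only the \emph{pre}-right-continuous $\sigma$-fields; the initial expansion is defined as the right-continuous regularization. Your Lemmas~\ref{L:conv} and~\ref{L:vee} give $\mathcal F_t\vee\sigma(\tau_n)\stackrel{w}{\to}\mathcal F_t\vee\sigma(\tau)$, and then Lemma~\ref{L:subset} (since $\mathcal H^n_t\supset\mathcal F_t\vee\sigma(\tau_n)$) yields only $\mathcal H^n_t\stackrel{w}{\to}\mathcal F_t\vee\sigma(\tau)$. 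The paper handles this by invoking the standing hypothesis that $\mathbb F$ is the natural filtration of some c\`adl\`ag process and adapting the proofs of Lemmas~\ref{L:carac} and~\ref{L:conv} directly to $\mathcal H_t$; alternatively one can pass from the non-right-continuous filtration to $\mathbb H$ by the usual stability of the semimartingale property. Either patch is routine.

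For the $\mathbb G$ conclusion you take a genuinely different and considerably longer route. Your construction---taking the $\mathbb G^n$-dual predictable projection $\tilde A^n$ of the $\mathbb H^n$-predictable process $A^n$, showing $M-\tilde A^n$ is a $\mathbb G^n$-martingale via optional sampling, controlling $E(\int_0^T|d\tilde A^n_s|)\leq K$, and re-running Corollary~\ref{cor:semimg}---is correct, but unnecessary. The paper dispatches the $\mathbb G$ case in one line: since $\mathbb G\subset\mathbb H$ and $M$ is $\mathbb G$-adapted, Stricker's theorem immediately gives that the $\mathbb H$-semimartingale $M$ is also a $\mathbb G$-semimartingale. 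Your approach does buy something the paper's does not---it would give, via Corollary~\ref{cor:semimg}(ii)--(iii), direct control on the $\mathbb G$-canonical decomposition in terms of the $\tilde A^n$---but for the bare statement of Theorem~\ref{T:ini} the containment $\mathbb G\subset\mathbb H$ is the efficient observation.
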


\begin{proof}
Since $\tau_n$ converges in probability to $\tau$ and $\mathbb F$ is the natural filtration of some c\`adl\`ag process, we can prove that $\mathcal H^n_t \stackrel{w}{\rightarrow} \mathcal H_t$ for each $t\in [0,T]$, using the same techniques as in Lemmas \ref{L:carac} and \ref{L:conv}. Up to replacing $K$ by $K+E(\sup_{0 \leq t\leq T}|M_t|)$, $M^n=M-A^n$ and $A^n$ satisfy the assumptions of Corollary~\ref{cor:semimg}. Therefore $M$ is an $\mathbb H$~semimartingale, and a $\mathbb G$~semimartingale by Stricker's theorem. 
\end{proof}

One case where the first assumption of Theorem \ref{T:ini} is satisfied is when $\tau_n$ satisfies Jacod's criterion, for each $n\geq 0$. In this case, and if $\mathbb F$ is the natural filtration of a Brownian motion $W$, the result above can be made more explicit. Assume for simplicity that the conditional distributions of $\tau_n$ are absolutely continuous w.r.t~Lebesgue measure,
$$
P(\tau_n\in du\mid\mathcal F_t)(\omega)=p^n_t(u,\omega)du
$$
where the conditional densities are chosen so that $(u,\omega, t)\rightarrow p^n_t(u,\omega)$ is c\`adl\`ag in $t$ and measurable for the optional $\sigma$-field associated with the filtration $\hat{\mathbb F}$ given by $\hat{\mathcal F}_t=\cap_{u>t}\mathcal B(\mathbb R)\otimes \mathcal F_u$. From the martingale representation theorem in a Brownian filtration, there exists for each $n$ a family $\{q^n(u), u>0\}$ of $\mathbb F$~predictable processes $(q^n_t(u))_{0\leq t\leq T}$ such that
\begin{equation}\label{eq:dens}
p^n_t(u)=p^n_0(u)+\int_0^tq^n_s(u)dW_s
\end{equation}

\begin{corollary}\label{cor:ini}
Assume there exists $K$ such that $E\Big(\int_0^T\Big|\frac{q^n_s(\tau_n)}{p^n_s(\tau_n)}\Big|ds\Big)\leq K$ for all $n$, then $W$ is a special semimartingale in both $\mathbb H$ and $\mathbb G$.
\end{corollary}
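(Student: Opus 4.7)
The plan is to reduce Corollary~\ref{cor:ini} to a direct application of Theorem~\ref{T:ini} by exhibiting the $\mathbb H^n$--canonical decomposition of $W$ explicitly, using the Jacod decomposition formula in a Brownian filtration.

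First, I would invoke Jacod's theorem (\cite{Jacod:1987}, see also~\cite[Theorem~10, p.~371]{Protter:2005}): since $\tau_n$ satisfies Jacod's criterion with conditional density $p^n_t(u)$, the process $W$ is an $\mathbb H^n$~semimartingale with canonical decomposition $W = \widetilde W^n + A^n$, where $\widetilde W^n$ is an $\mathbb H^n$~Brownian motion and the finite variation part is the predictable absolutely continuous process
\begin{equation*}
A^n_t \;=\; \int_0^t \frac{q^n_s(\tau_n)}{p^n_s(\tau_n)}\,ds.
\end{equation*}
The representation formula~(\ref{eq:dens}) is precisely what lets one identify the drift density as $q^n_s(u)/p^n_s(u)$ evaluated at $u=\tau_n$: indeed, one applies the martingale representation theorem to the Jacod density process in order to read off the bracket $\langle W, p^n_\cdot(u)\rangle$ and thus the finite variation correction.

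Next, I would verify the integrability hypotheses of Theorem~\ref{T:ini}. The uniform total variation bound
\begin{equation*}
E\!\left(\int_0^T |dA^n_s|\right) \;=\; E\!\left(\int_0^T\Big|\tfrac{q^n_s(\tau_n)}{p^n_s(\tau_n)}\Big|\,ds\right) \;\leq\; K
\end{equation*}
is exactly the standing assumption of the corollary, and $\sup_{0\leq t\leq T}|W_t|$ is integrable (indeed $L^p$ for every $p$) since $W$ is a Brownian motion on a finite interval. Applying Theorem~\ref{T:ini} with $M = W$ then yields that $W$ is both an $\mathbb H$~semimartingale and, via Stricker's theorem, a $\mathbb G$~semimartingale.

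It remains only to upgrade ``semimartingale'' to ``special semimartingale.'' This is essentially free: the proof of Theorem~\ref{T:semimg2}(i) that underlies Theorem~\ref{T:ini} actually produces a uniform quasimartingale bound $|E((H\cdot W)_T)|\leq K + E(\sup_t|W_t|)$ on elementary predictable integrands $|H|\leq 1$, and every quasimartingale is a special semimartingale. I do not foresee any real obstacle: the only subtlety is the identification of the $\mathbb H^n$--drift of $W$, which is classical, and the rest is bookkeeping to check that the hypotheses of Theorem~\ref{T:ini} are met.
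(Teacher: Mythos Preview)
Your proposal is correct and follows essentially the same route as the paper: identify the $\mathbb H^n$ drift of $W$ via Jacod's density formula and the representation~(\ref{eq:dens}) as $A^n_t=\int_0^t q^n_s(\tau_n)/p^n_s(\tau_n)\,ds$, then feed this into Theorem~\ref{T:ini}. Your extra paragraph on the ``special'' upgrade is a welcome bit of care---the paper leaves this implicit, but it is indeed automatic here since Theorem~\ref{T:ini} ultimately rests on Theorem~\ref{T:semimg2}(i), which already yields a special semimartingale (and in any case $W$ is continuous with $\sup_t|W_t|\in L^1$).
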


\begin{proof}
Since $\tau_n$ satisfy Jacod's criterion, it follows from Theorem 2.1 in \cite{Jacod:1987} that $W_t-A^n_t$ is an $\mathbb H$~local martingale, where $A^n_t=\int_0^t\frac{d\langle p^n(u), W\rangle_s}{p^n_s(u)}\Big|_{u=\tau_n}$. Now, it follows from (\ref{eq:dens}) that
$A^n_t=\int_0^t\frac{q^n_s(\tau_n)}{p^n_s(\tau_n)}ds$ and Theorem \ref{T:ini} allows us to conclude. 
\end{proof}

Assume the assumptions of Corollary \ref{cor:ini} are satisfied and let $W=M+A$ be the $\mathbb H$~canonical decomposition of $W$. Let $m$ be an $\mathbb F$~predictable process such that $\int_0^t m_s^2ds$ is locally integrable and let $M$ be the $\mathbb F$~local martingale $M_t=\int_0^tm_sdW_s$. Theorem VI.5 in \cite{Protter:2005} then guarantees that $M$ is an $\mathbb H$~semimartingale as soon as the process $(\int_0^tm_sdA_s)_{t\geq 0}$ exists as a path-by-path Lebesgue-Stieltjes integral a.s. See~\cite{JY} for a more comprehensive investigation of this result.

\begin{example}
In order to emphasize that some assumptions as in Theorem \ref{T:ini} are needed, we provide now a counter-example. Let $\mathbb F$ be the natural filtration of some Brownian motion~$B$ and choose $\tau$ to be some functional of the Brownian path i.e.~$\tau=f((B_s, 0\leq s\leq 1))$, such that $\sigma(\tau)=\mathcal F_1$. Then $B$ is not a semimartingale in $\mathbb H=(\mathcal F_t\vee\sigma(\tau))_{0\leq t\leq 1}$. Now, define $\tau_n=\tau+\frac{1}{\sqrt{n}}N$, where $N$ is a standard normal random variable independent from $\mathbb F$. Then $\tau_n$ converge a.s.~to $\tau$ and $P(\tau_n\leq u\mid\mathcal F_t)=\int_{-\infty}^uE(g_n(v-\tau)\mid\mathcal F_t)dv$ where $g_n$ is the probability density function of $\frac{1}{\sqrt{n}}N$, hence $P(\tau_n\in du\mid\mathcal F_t)(\omega)=E(g_n(u-\tau)\mid\mathcal F_t)(\omega)(du)$. Therefore, $\tau_n$ satisfies Jacod's criterion, for each $n$ and $p^n_t(u,\omega)=E(g_n(u-\tau)\mid\mathcal F_t)(\omega)$. Thus, $B$ is a semimartingale in $\mathbb H^n=(\mathcal F_t\vee\sigma(\tau_n)_{0\leq t\leq 1})$ and $\mathcal H^n_t\stackrel{w}{\rightarrow} \mathcal H_t$ for each $0\leq t\leq 1$.
\end{example}

\subsubsection{Progressive filtration expansion with a process}

Let $(N^n)_{n\geq 1}$ be a sequence of c\`adl\`ag processes converging in probability under the Skorohod $J_1$-topology to a c\`adl\`ag process $N$ and let $\mathbb{N}^n$ and $\mathbb{N}$ be their natural filtrations. Define the filtrations $\mathbb G^{0,n}=\mathbb{F}\vee\mathbb{N}^n$ and $\mathbb G^n$ by $\mathcal G_t^n=\bigcap_{u>t}\mathcal G^{0,n}_u$. Let also $\mathbb G^0$ (resp. $\mathbb G$) be the smallest (resp. the smallest right-continuous) filtration containing $\mathbb F$ and to which $N$ is adapted. The result below is the main theorem of this paper.

\begin{theorem}\label{T:exp}
Let $X$ be an $\mathbb{F}$~semimartingale such that for each $n$, $X$ is a $\mathbb{G}^n$~semimartingale with canonical decomposition $X = M^n + A^n$. Assume $E(\int_0^T|dA^n_s|) \leq K$ and $E(\sup_{0\leq s\leq T}|M^n_s|)\leq K$ for some $K$ and all $n$. Finally, assume one of the following holds.
\begin{itemize}
	\item[-] $N$ has no fixed times of discontinuity,
	
	\item[-] $N^n$ is a discretization of $N$ along some refining subdivision $(\pi_n)_{n\geq 1}$ such that each fixed time of discontinuity of $N$ belongs to $\cup_n \pi_n$.
\end{itemize}
Then 
\begin{itemize}
	\item[(i)] $X$ is a $\mathbb G^0$~special semimartingale. 

	\item[(ii)] Moreover, if $\mathbb F$ is the natural filtration of some c\`adl\`ag process then $X$ is a $\mathbb G$~special semimartingale with canonical decomposition $X=M+A$ such that $M$ is a $\mathbb{G}$ martingale and $\sup_{0\leq s\leq T} |M_s|$ and $\int_0^T|dA_s|$ are integrable.

	\item[(iii)] Furthermore, assume that $X$ is $\mathbb G$~quasi-left continuous and $\mathbb G^n \stackrel{w}{\rightarrow} \mathbb G$. Then $(M^n, A^n)$ converges in probability under the Skorohod $J_1$ topology to $(M,A)$.
\end{itemize}
\end{theorem}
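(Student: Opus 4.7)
The plan is to apply Corollary \ref{cor:semimg} to the sequence of right-continuous filtrations $(\mathbb G^n)_{n\geq 1}$, whose integrability hypotheses on $M^n$ and $A^n$ are exactly those supplied in the statement. All the technical work therefore reduces to verifying the appropriate point-wise weak convergence of the associated $\sigma$-fields.

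For Part (i), under either hypothesis on $N^n$, Lemma \ref{L:conv} (if $N$ has no fixed times of discontinuity) or Lemma \ref{L:discr} (in the discretization case) yields $\mathcal N^n_t \stackrel{w}{\rightarrow} \mathcal N_t$ for every $t \in [0,T]$. Since $\mathcal F_t$ is independent of $n$, we trivially have $\mathcal F_t \stackrel{w}{\rightarrow} \mathcal F_t$, and Lemma \ref{L:vee} produces $\mathcal G^{0,n}_t = \mathcal F_t \vee \mathcal N^n_t \stackrel{w}{\rightarrow} \mathcal F_t \vee \mathcal N_t = \mathcal G^0_t$ for all $t$. Since $\mathcal G^n_t \supset \mathcal G^{0,n}_t$, Lemma \ref{L:subset} upgrades this to $\mathcal G^n_t \stackrel{w}{\rightarrow} \mathcal G^0_t$ for all $t$. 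As $X$ is $\mathbb F$-adapted, it is $\mathbb G^0$-adapted, so Corollary \ref{cor:semimg}(i)---which does not require the limit filtration to be right-continuous---gives that $X$ is a $\mathbb G^0$-special semimartingale.

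For Part (ii), the extra hypothesis that $\mathbb F$ is the natural (right-continuous) filtration of some c\`adl\`ag process $Y$ allows us to recognize $\mathbb G^n$ as the right-continuous natural filtration of the joint c\`adl\`ag process $(Y, N^n)$, and $\mathbb G$ as that of $(Y, N)$. Because $Y$ is fixed and $N^n \to N$ in probability for the $J_1$ topology, $(Y, N^n) \to (Y, N)$ in probability for $J_1$. Applying the joint-process versions of Lemmas \ref{L:conv}--\ref{L:discr}, supplemented by a characterization in the spirit of Lemma \ref{L:carac} at any remaining fixed discontinuities of $Y$, one obtains $\mathcal G^n_t \stackrel{w}{\rightarrow} \mathcal G_t$ for every $t$. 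Corollary \ref{cor:semimg}(ii) then delivers the canonical decomposition $X = M + A$ in $\mathbb G$, with $M$ a true $\mathbb G$-martingale and $\sup_{0\leq s\leq T}|M_s|$ and $\int_0^T|dA_s|$ both integrable. For Part (iii), the assumption $\mathbb G^n \stackrel{w}{\rightarrow} \mathbb G$ implies in particular the point-wise weak convergence of the $\sigma$-fields, and together with the $\mathbb G$-quasi-left-continuity of $X$ allows a direct invocation of Corollary \ref{cor:semimg}(iii).

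The main obstacle is the upgrade in Part (ii) from $\mathcal G^n_t \stackrel{w}{\rightarrow} \mathcal G^0_t$ (which is all the raw-vee argument of Part (i) provides) to $\mathcal G^n_t \stackrel{w}{\rightarrow} \mathcal G_t$: the raw vee $\mathcal G^0_t = \mathcal F_t \vee \mathcal N_t$ can be strictly smaller than the right-continuous $\mathcal G_t = \bigcap_{u>t}(\mathcal F_u \vee \mathcal N_u)$, so Lemmas \ref{L:vee} and \ref{L:subset} alone cannot identify the correct limit. Reinterpreting $\mathbb G$ as the right-continuous natural filtration of the single c\`adl\`ag process $(Y, N)$ and appealing to the joint-process analogues of Lemmas \ref{L:conv} or \ref{L:discr} is the decisive step that makes the c\`adl\`ag assumption on $\mathbb F$ essential.
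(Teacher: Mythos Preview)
Your proposal is correct and follows essentially the same route as the paper: for Part (i) you use Lemmas \ref{L:conv}/\ref{L:discr}, then \ref{L:vee} and \ref{L:subset}, then Corollary \ref{cor:semimg}, exactly as the paper does; for Part (ii) you correctly identify that the key step is recognizing $\mathbb G^n$ and $\mathbb G$ as the natural (right-continuous) filtrations of the joint processes $(Y,N^n)$ and $(Y,N)$ and rerunning the proof of Lemma \ref{L:conv} (or \ref{L:discr}) at that level, which is precisely what the paper means by ``the same proofs as of Lemmas \ref{L:conv} and \ref{L:discr} guarantee that $\mathcal G^n_t\stackrel{w}{\rightarrow}\mathcal G_t$''. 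One minor simplification: your caveat about ``remaining fixed discontinuities of $Y$'' is unnecessary, since the $Y$-component is constant in $n$ and hence the finite-dimensional convergence $(Y_{t_i},N^n_{t_i})\to(Y_{t_i},N_{t_i})$ required in the proof of Lemma \ref{L:conv} only depends on the behaviour of $N$.
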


\begin{proof}
Under assumption (i), since $N^n\stackrel{P}{\rightarrow}N$ and $P(\Delta N_t \neq 0)=0$ for all $t$, it follows from Lemma \ref{L:conv} that $\mathcal N^n_t\stackrel{P}{\rightarrow}\mathcal N_t$ for all $t$. The same holds under assumption (ii) using Lemma \ref{L:discr}. Lemma \ref{L:vee} then ensures that $\mathcal G^{0,n}_t\stackrel{w}{\rightarrow}\mathcal G^0_t$ for all $t$. Since $\mathcal G^{0,n}_t\subset\bigcap_{u>t}\mathcal G^{0,n}_u=\mathcal G^n_t$, it follows from Lemma \ref{L:subset} that $\mathcal G^n_t\stackrel{w}{\rightarrow}\mathcal G^0_t$ for all $t$. Being an $\mathbb{F}$~semimartingale, $X$ is clearly $\mathbb G^0$~adapted. An application of Corollary \ref{cor:semimg} ends the proof of the first claim. When $\mathbb F$ is the natural filtration of some c\`adl\`ag process, the same proofs as of Lemmas \ref{L:conv} and \ref{L:discr} guarantee that $\mathcal G^n_t\stackrel{w}{\rightarrow}\mathcal G_t$ for all $t$. Since $\mathbb G$ is right-continuous, the second and third claims follow from Corollary \ref{cor:semimg}.
\end{proof}

We apply this result to expand the filtration $\mathbb F$ progressively with a point process. Let $(\tau_i)_{i\geq 1}$ and $(X_i)_{i\geq 1}$ be two sequences of random variables such that for each $n$, the random vector $(\tau_1, X_1, \ldots, \tau_n, X_n)$ satisfies Jacod's criterion w.r.t the filtration $\mathbb F$. Assume that for all $t$ and $i$, $P(\tau_i=t)=0$ and that one of the following holds:
\begin{itemize}
	\item[(i)] For all $i$, $X_i$ and $\tau_i$ are independent, $E|X_i|=\mu$ for some $\mu$ and $\sum_{i=1}^{\infty}P(\tau_i\leq T)<\infty$
	
	\item[(ii)] $E(|X_i^2|)=c$ and $\sum_{i=1}^{\infty}\sqrt{P(\tau_i\leq T)}< \infty$.
\end{itemize}
Let $N^n_t=\sum_{i=1}^nX_i1_{\{\tau_i\leq t\}}$ and $N_t=\sum_{i=1}^{\infty}X_i1_{\{\tau_i\leq t\}}$. The assumptions on $N^n$ and $N$ as of Theorem \ref{T:exp} are satisfied.
\begin{lemma}
Under the assumptions above, $N_t\in L^1$ for each $t$, $N^n\stackrel{P}{\rightarrow} N$ and $N$ has no fixed times of discontinuity. 
\end{lemma}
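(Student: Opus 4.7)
The plan is to verify the three claims in turn, with both assumption sets (i) and (ii) reduced to the same type of $L^1$ bound on the terms $|X_i|1_{\{\tau_i\leq T\}}$.

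First I would establish integrability. Fix $t\leq T$. By monotone convergence and the triangle inequality,
$$
E|N_t|\leq \sum_{i=1}^\infty E\big[|X_i|1_{\{\tau_i\leq t\}}\big].
$$
Under (i), independence of $X_i$ and $\tau_i$ gives $E[|X_i|1_{\{\tau_i\leq t\}}]=\mu\,P(\tau_i\leq t)$, so the series is bounded by $\mu\sum_i P(\tau_i\leq T)<\infty$. Under (ii), Cauchy--Schwarz yields $E[|X_i|1_{\{\tau_i\leq t\}}]\leq \sqrt{c}\sqrt{P(\tau_i\leq t)}$, so the series is bounded by $\sqrt{c}\sum_i\sqrt{P(\tau_i\leq T)}<\infty$. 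As a by-product one gets $\sum_{i\geq 1}|X_i|1_{\{\tau_i\leq T\}}<\infty$ a.s., so $N$ is well-defined pathwise on a set of full measure.

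Next I would upgrade the convergence $N^n\to N$ to uniform convergence in probability on $[0,T]$, which a fortiori gives convergence in probability in the Skorohod $J_1$ topology (take the identity as time change in the definition of $d_S$). The pathwise bound
$$
\sup_{0\leq t\leq T}|N_t-N^n_t|\leq \sum_{i=n+1}^\infty |X_i|1_{\{\tau_i\leq T\}}
$$
combined with the same two estimates as above shows that the right-hand side has expectation equal to the tail of a convergent series, hence tending to $0$. Thus $\sup_{t\leq T}|N_t-N^n_t|\to 0$ in $L^1$, and therefore in probability.

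Finally, for the absence of fixed times of discontinuity, observe that $\Delta N_t = \sum_i X_i 1_{\{\tau_i=t\}}$, so
$$
\{\Delta N_t\neq 0\}\subseteq \bigcup_{i\geq 1}\{\tau_i=t\}.
$$
Countable subadditivity and the assumption $P(\tau_i=t)=0$ for every $i$ give $P(\Delta N_t\neq 0)=0$. There is no real obstacle in this lemma; the summability hypotheses in (i) and (ii) are tailored precisely to make the tail estimate in step two go through, and the only mild subtlety is ensuring $N$ is well-defined, which is handled at the end of the integrability step.
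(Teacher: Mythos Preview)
Your argument is correct and is essentially the same as the paper's: integrability via the termwise bound (using independence under (i) and Cauchy--Schwarz under (ii)), uniform convergence on $[0,T]$ via the tail of the same convergent series, and absence of fixed discontinuities via countable subadditivity. The only cosmetic difference is that the paper writes out the Markov-inequality step where you go directly through $L^1$ convergence of the supremum, which is at least as strong.
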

\begin{proof}
We prove the statement under assumption (i). For each~$t$,
$$
E(|N_t|)\leq \sum_{i=1}^{\infty}E(|X_i|1_{\{\tau_i\leq t\}})\leq \mu\sum_{i=1}^{\infty}P(\tau_i\leq t)<\infty
$$
Therefore, $N_t\in L^1$. For $\eta>0$ and $n$ integer, we obtain the following estimate.
\begin{align*}
P(\sup_{0\leq t\leq T}|N_t-N^n_t|\geq\eta)&=P(\sup_{0\leq t\leq T}\big|\sum_{i=n+1}^{\infty}X_i1_{\{\tau_i\leq t\}}\big|\geq \eta)\\
&\leq P(\sup_{0\leq t\leq T}\sum_{i=n+1}^{\infty}|X_i|1_{\{\tau_i\leq t\}}\geq \eta)=P(\sum_{i=n+1}^{\infty}|X_i|1_{\{\tau_i\leq T\}}\geq \eta)\\
&\leq\frac{1}{\eta}E(\sum_{i=n+1}^{\infty}|X_i|1_{\{\tau_i\leq T\}})=\frac{\mu}{\eta}\sum_{i=n+1}^{\infty}P(\tau_i\leq T)\rightarrow 0
\end{align*}
This implies $N^n\stackrel{P}{\rightarrow} N$. Under assumption (ii), the proof is also straightforward and based on Cauchy Schwarz inequalities. Finally, since
$$
P(|\Delta N_t|\neq 0)\leq P(\exists i \mid \tau_i=t)\leq \sum_{i=1}^{\infty}P(\tau_i=t)=0
$$
$N$ has no fixed times of discontinuity
\end{proof}

Since the random vector $(\tau_1, X_1, \ldots, \tau_n, X_n)$ is assumed to satisfy Jacod's criterion, it follows from \cite{KLP} that $\mathbb F$~semimartingales remain $\mathbb G^n$~semimartingales, for each $n$. Therefore, this property also holds between $\mathbb F$ and $\mathbb G$ for $\mathbb F$~semimartingales whose $\mathbb G^n$~canonical decompositions satisfy the regularity assumptions of Theorem~\ref{T:exp}. Here $\mathbb G$ is the smallest filtration containing $\mathbb F$ and to which $N$ is adapted.

We would like to take a step further and reverse the previous situation. That is instead of starting with a sequence of processes $N^n$ converging to some process $N$, and putting assumptions on the semimartingale properties of $\mathbb F$~semimartingales w.r.t the intermediate filtrations~$\mathbb G^n$ and their decompositions therein, we would like to expand the filtration $\mathbb F$ with a given process $X$ and express all the assumptions in terms of $X$ and the $\mathbb F$~semimartingales considered. We are able to do this for c\`adl\`ag processes which satisfy a criterion that can loosely be seen as a localized extension of Jacod's criterion to processes. The integrability assumptions of Theorem \ref{T:exp} are expressed in terms of $\mathcal F_t$-conditional densities. Before doing this, we conclude this section by studying the stability of hypothesis $(H)$ with respect to the weak convergence of the $\sigma$-fields in a filtration expansion setting.

\subsection{The case of hypothesis $(H)$}\label{ss3.3}
Recall that given two nested filtrations $\mathbb F\subset\mathbb G$, we say that hypothesis $(H)$ holds between $\mathbb F$ and $\mathbb G$ if any square integrable $\mathbb F$ martingale remains a $\mathbb G$ martingale. Br\'emaud and Yor proved the next lemma (see \cite{BremaudYor}).
\begin{lemma}\label{L:H}
Let $\mathbb F\subset\mathbb G$ two nested filtrations. The following assertions are equivalent.
\begin{itemize}
\item[(i)] Hypothesis $(H)$ holds between $\mathbb F$ and $\mathbb G$.
 
\item[(ii)] For each $0\leq t\leq T$, $\mathcal F_T$ and $\mathcal G_t$ are conditionally independent given $\mathcal F_t$.

\item[(iii)] For each $0\leq t\leq T$, each $F\in L^2(\mathcal F_T)$ and each $G_t\in L^2(\mathcal G_t)$, 
$$
E(FG_t\mid\mathcal F_t)=E(F\mid\mathcal F_t)E(G_t\mid\mathcal F_t).
$$
\end{itemize}
\end{lemma}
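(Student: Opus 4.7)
The plan is to prove the cycle (i)$\Rightarrow$(iii)$\Rightarrow$(i) and separately note (ii)$\Leftrightarrow$(iii).

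\textbf{Step 1: (i)$\Rightarrow$(iii).} Fix $t\in[0,T]$ and $F\in L^2(\mathcal F_T)$. Set $M_s=E(F\mid\mathcal F_s)$ for $s\in[0,T]$; this is a square-integrable $\mathbb F$-martingale, hence by hypothesis $(H)$ also a $\mathbb G$-martingale, so $M_t=E(F\mid\mathcal G_t)$. For any $G_t\in L^2(\mathcal G_t)$, using that $\mathcal F_t\subset\mathcal G_t$ and that $M_t$ is $\mathcal F_t$-measurable, one writes
$$
E(FG_t\mid\mathcal F_t)=E\bigl(E(FG_t\mid\mathcal G_t)\mid\mathcal F_t\bigr)=E\bigl(G_t E(F\mid\mathcal G_t)\mid\mathcal F_t\bigr)=E(G_t M_t\mid\mathcal F_t)=M_t E(G_t\mid\mathcal F_t),
$$
which gives (iii) since $M_t=E(F\mid\mathcal F_t)$.

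\textbf{Step 2: (iii)$\Rightarrow$(i).} Let $M$ be any square-integrable $\mathbb F$-martingale. Since $\mathbb F\subset\mathbb G$, $M_t$ is $\mathcal G_t$-measurable, and since $M_T\in L^2$, $M_t\in L^2$ as well. It suffices to show $E(M_T\mid\mathcal G_t)=M_t$, which by the $L^2$-characterization of conditional expectation is equivalent to $E(M_T G_t)=E(M_t G_t)$ for every $G_t\in L^2(\mathcal G_t)$. Using (iii) with $F=M_T$ together with the $\mathbb F$-martingale property $E(M_T\mid\mathcal F_t)=M_t$,
$$
E(M_T G_t)=E\bigl(E(M_T G_t\mid\mathcal F_t)\bigr)=E\bigl(E(M_T\mid\mathcal F_t)\,E(G_t\mid\mathcal F_t)\bigr)=E\bigl(M_t E(G_t\mid\mathcal F_t)\bigr)=E(M_t G_t),
$$
the last equality by tower and $\mathcal F_t$-measurability of $M_t$. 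This gives hypothesis $(H)$.

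\textbf{Step 3: (ii)$\Leftrightarrow$(iii).} This is the standard measure-theoretic characterization of conditional independence: $\mathcal F_T$ and $\mathcal G_t$ are conditionally independent given $\mathcal F_t$ if and only if $E(FG_t\mid\mathcal F_t)=E(F\mid\mathcal F_t)E(G_t\mid\mathcal F_t)$ for all bounded $\mathcal F_T$-measurable $F$ and bounded $\mathcal G_t$-measurable $G_t$ (take $F=1_A$, $G_t=1_B$ for one direction). Extension from bounded to $L^2$ random variables is by a routine truncation/density argument (bounded random variables are dense in $L^2$ and both sides are $L^2$-continuous in $F$ and $G_t$ by Cauchy--Schwarz and the contractivity of conditional expectation).

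\textbf{Main obstacle.} There is no real obstacle; the only subtle point is that in Step 1 one must invoke $(H)$ to identify $E(F\mid\mathcal G_t)$ with $E(F\mid\mathcal F_t)$ for $F\in L^2(\mathcal F_T)$, and in Step 2 one must verify that the candidate $M_t$ has the right $\mathcal G_t$-measurability, which is immediate from $\mathbb F\subset\mathbb G$. The passage between bounded and $L^2$ random variables in Step 3 is standard and deserves only a brief mention.
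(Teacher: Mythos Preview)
Your proof is correct and is essentially the standard argument. Note that the paper does not actually prove this lemma: it merely states it and attributes it to Br\'emaud and Yor \cite{BremaudYor}, so there is no ``paper's own proof'' to compare against. Your argument is the classical one found in that reference (and in most textbook treatments), with the key observation in Step~1 that hypothesis $(H)$ forces $E(F\mid\mathcal G_t)=E(F\mid\mathcal F_t)$ for $F\in L^2(\mathcal F_T)$, after which the factorisation is immediate.
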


Let $\mathbb F\subset\mathbb G$ be two nested right-continuous filtrations and $\mathbb G^n$ be a sequence of right-continuous filtrations containing $\mathbb F$ and such that $\mathcal G^n_t$ converges weakly to $\mathcal G_t$ for each $t$. We mentioned that an $\mathbb F$~local martingale that remains a $\mathbb G^n$~semimartingale for each $n$ might still lose its semimartingale property in $\mathbb G$ and we provided conditions that prevent this pathological behavior. In this subsection, we prove that this cannot happen in case hypothesis $(H)$ holds between $\mathbb F$ and each $\mathbb G^n$. One obtains even that hypothesis $(H)$ holds between $\mathbb F$ and $\mathbb G$.

\begin{theorem}
Let $\mathbb F$, $\mathbb G$ and $(\mathbb G^n)_{n\geq 1}$ right-continuous filtrations such that $\mathbb F\subset\mathbb G$, $\mathbb F\subset\mathbb G^n$ for each $n$ and $\mathcal G^n_t\stackrel{w}{\rightarrow}\mathcal G_t$ for each $t$. Assume that for each $n$, hypothesis $(H)$ holds between $\mathbb F$ and $\mathbb G^n$. Then hypothesis $(H)$ holds between $\mathbb F$ and $\mathbb G$.
\end{theorem}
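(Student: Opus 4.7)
The plan is to verify condition (iii) of Lemma~\ref{L:H} for the pair $(\mathbb F, \mathbb G)$: for every $F\in L^2(\mathcal F_T)$ and $G_t\in L^2(\mathcal G_t)$,
$$
E(FG_t\mid\mathcal F_t) = E(F\mid\mathcal F_t)\,E(G_t\mid\mathcal F_t).
$$
By standard truncation it suffices to establish this identity when $F$ and $G_t$ are bounded, and then extend to $L^2$ by approximating $F$ and $G_t$ by $(F\wedge k)\vee(-k)$ and $(G_t\wedge k)\vee(-k)$ and using $L^2$-continuity of conditional expectation and of products of bounded random variables.

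So assume $F$ and $G_t$ are bounded by some constant $C$. The idea is to approximate the $\mathcal G_t$-measurable factor $G_t$ by $\mathcal G_t^n$-measurable surrogates for which we can invoke hypothesis $(H)$ between $\mathbb F$ and $\mathbb G^n$. Set $G_t^n := E(G_t\mid\mathcal G_t^n)$, which is bounded by $C$ and belongs to $L^2(\mathcal G_t^n)$. Since $\mathcal F_t\subset\mathcal G_t^n$, the tower property gives $E(G_t^n\mid\mathcal F_t) = E(G_t\mid\mathcal F_t)$. Applying Lemma~\ref{L:H}(iii) to the pair $(\mathbb F,\mathbb G^n)$ with the square-integrable variables $F$ and $G_t^n$ yields, for every $n$,
$$
E(F G_t^n\mid\mathcal F_t) = E(F\mid\mathcal F_t)\,E(G_t^n\mid\mathcal F_t) = E(F\mid\mathcal F_t)\,E(G_t\mid\mathcal F_t).
$$
Note that the right-hand side does not depend on $n$.

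It remains to show that $E(FG_t^n\mid\mathcal F_t)\to E(FG_t\mid\mathcal F_t)$ in $L^1$ (say). Since $G_t$ is bounded and $\mathcal G_t$-measurable, Lemma~\ref{L:sigma} applied to $\mathcal G_t^n\stackrel{w}{\rightarrow}\mathcal G_t$ gives $G_t^n\stackrel{P}{\rightarrow}G_t$. Combined with the uniform bound $|G_t^n|\leq C$ (so $|FG_t^n - FG_t|\leq 2C|F|$ is uniformly integrable), dominated convergence in its convergence-in-probability form yields $FG_t^n\to FG_t$ in $L^1$. The $L^1$-contraction property of conditional expectation then gives $E(FG_t^n\mid\mathcal F_t)\to E(FG_t\mid\mathcal F_t)$ in $L^1$. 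Passing to the limit in the displayed identity above produces the desired equality, and Lemma~\ref{L:H} concludes that hypothesis $(H)$ holds between $\mathbb F$ and $\mathbb G$.

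The only potential obstacle is the mode of convergence: weak convergence of $\sigma$-fields only delivers convergence in probability of the conditional expectations, not in $L^2$ (since the $\mathcal G_t^n$ need not be monotone, one cannot appeal to closed martingale convergence). Restricting to bounded $F,G_t$ at the outset circumvents this, because the uniform bound upgrades convergence in probability to $L^1$-convergence, which is all that is needed to take limits inside the conditional expectation.
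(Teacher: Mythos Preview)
Your proof is correct and follows essentially the same route as the paper: define $G_t^n=E(G_t\mid\mathcal G_t^n)$, use $\mathcal F_t\subset\mathcal G_t^n$ to rewrite $E(G_t^n\mid\mathcal F_t)=E(G_t\mid\mathcal F_t)$, invoke hypothesis $(H)$ for $(\mathbb F,\mathbb G^n)$, and pass to the limit using weak convergence of $\sigma$-fields together with a domination argument. The only cosmetic difference is that you truncate both $F$ and $G_t$ up front, whereas the paper keeps $F\in L^2(\mathcal F_T)$ and only truncates $G_t$; either reduction is routine.
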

\begin{proof}
We use Lemma \ref{L:H} and start with the bounded case. Let $0\leq t\leq T$, $F\in L^2(\mathcal F_T)$ and $G_t\in L^{\infty}(\mathcal G_t)$. For each $n$, define $G^n_t=E(G_t\mid\mathcal G^n_t)$. Then $G^n_t\in L^{\infty}(\mathcal G^n_t)$. Since hypothesis $(H)$ holds between $\mathbb F$ and $\mathbb G^n$, Lemma \ref{L:H} guarantees that $E(FG^n_t\mid\mathcal F_t)=E(F\mid\mathcal F_t)E(G^n_t\mid\mathcal F_t)$. But $\mathcal F_t\subset\mathcal G^n_t$, hence $E(G^n_t\mid\mathcal F_t)=E(E(G_t\mid\mathcal G^n_t)\mid\mathcal F_t)=E(G_t\mid\mathcal F_t)$. Since $\mathcal G^n_t\stackrel{w}{\rightarrow}\mathcal G_t$, $FG^n_t\stackrel{P}{\rightarrow}FG_t$. Now $FG^n_t$ is bounded by a square integrable process (by assumption) so the convergence holds in $L^1$ by the Dominated Convergence theorem so that $E(FG^n_t\mid\mathcal F_t)\stackrel{P}{\rightarrow}E(FG_t\mid\mathcal F_t)$. This proves that $E(FG_t\mid\mathcal F_t)=E(F\mid\mathcal F_t)E(G_t\mid\mathcal F_t)$. The general case where $G_t\in L^2(\mathcal G_t)$ follows by applying the bounded case result to the bounded random variables $G^{(m)}_t=G_t\wedge m$. Then for each $m$,
$$
E(FG^{(m)}_t\mid\mathcal F_t)\stackrel{P}{\rightarrow}E(FG^{(m)}_t\mid\mathcal F_t)
$$
and the Monotone Convergence theorem allows us to conclude.
\end{proof}

\section{A Filtration Expansion Result Based on an Assumption Involving Honest Times}\label{3bis}
This theorem is rather simple, but we include it both for completeness, and also because we need it later for our treatment of Bessel processes (see Section~\ref{Bessel}). 

Let $(\gep_n)_{n\geq 0}$ be a sequence of positive real numbers decreasing to zero.
We assume that the continuous adapted process $X$ is increasing to infinity and we define sequences of random times $(\tau^n_p)_{p\geq 0}$ to be
$$
\tau^n_p=\inf\{t\geq 0, X_t\geq p\gep_n\}
$$
Since $X$ is increasing to infinity, for $(\tau^n_p)_{p\geq 0}$ we have that for each $n\geq 1$, the sequence $(\tau^n_p)_{p\geq 1}$ is strictly increasing to infinity. That is, $\tau^n_p>\tau^n_{p-1}$ on the set where $\tau^n_{p-1}<\infty$, and $\lim_{p\rightarrow\infty}\tau^n_p=\infty$.
Define the sequence of processes
$$
X^n_t=\sum_{p=0}^{\infty}1_{\{\tau^n_p\leq t<\tau^n_{p+1}\}}X_{\tau^n_p}=\gep_n\sum_{p=0}^{\infty}p1_{\{\tau^n_p\leq t<\tau^n_{p+1}\}}
$$
and $\mathbb G^n$ (resp. $\mathbb G$) the progressive expansion of $\mathbb F$ with $X^n$ (resp. $X$). Let $\mathbb G^{\tau^n}$ be the smallest filtration containing $\mathbb F$ and that makes all $(\tau^n_p)_{p\geq 1}$ stopping times. We have the containment relation $\mathbb G^n\subset\mathbb G^{\tau^n}$. We make now the following assumption, noting that the validity of the assumption will depend on the process $X$ chosen.

\begin{assumption}[Honest times assumption]\label{A:6}
For each $n\geq 1$, the sequence $(\tau^n_p)_{p\geq 0}$ is an increasing sequence of $\mathbb F$~honest times such that $\tau^n_0=0$ and $\sup_p\tau^n_p=\infty$.
\end{assumption}

Under Assumption \ref{A:6}, the following holds (see Jeulin~\cite[Corollary 5.22]{Jeulin}).
\begin{theorem}[Jeulin]\label{T:4}
Let $M$ be an $\mathbb F$~local martingale. If Assumption \ref{A:6} holds, then $M-A^n$ is a $\mathbb G^{\tau^n}$ local martingale, where
\begin{equation}\label{eq:3}
A^n_t=\sum_{p=0}^{\infty}\int_0^t1_{\{\tau^n_p<s\leq \tau^n_{p+1}\}}\frac{1}{Z^{n,p+1}_{s^{-}}-Z^{n,p}_{s^{-}}}d\langle M, M^{n,p+1}-M^{n,p}\rangle_s
\end{equation}
and where $Z^{n,p}$ is the $\mathbb F$~optional projection of $\tau^n_p$ and $M^{n,p}$ is the martingale part in its Doob-Meyer decomposition.
\end{theorem}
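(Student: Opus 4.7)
The plan is to derive this from the classical Jeulin--Yor formula for progressive enlargement with a single honest time, applied successively on each interval $(\tau^n_p, \tau^n_{p+1}]$. Fix $n$ and suppress it from the notation. Recall that for an $\mathbb{F}$-honest time $\tau$ with Az\'ema supermartingale $Z^\tau$ (Doob--Meyer decomposition $Z^\tau = M^\tau - A^\tau$), any $\mathbb{F}$-local martingale $M$ becomes a semimartingale in the progressive expansion $\mathbb{F}^\tau$ with drift
$$
\int_0^{t\wedge\tau}\frac{d\langle M, M^\tau\rangle_s}{Z^\tau_{s^{-}}} \;-\; \int_{t\wedge\tau}^t\frac{d\langle M, M^\tau\rangle_s}{1-Z^\tau_{s^{-}}},
$$
the two terms coming from the different regimes before and after $\tau$. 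The idea is that, for a nested sequence of honest times, the stochastic interval $(\tau_p,\tau_{p+1}]$ contributes a drift of the same flavor, with $Z^{p+1}-Z^p$ playing the role of the Az\'ema supermartingale of ``currently sitting in this interval.''

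The argument proceeds by induction on $p$. Let $\mathbb{G}^{(p)}$ be the smallest right-continuous filtration containing $\mathbb{F}$ that makes $\tau_1,\ldots,\tau_p$ stopping times, so that $\mathbb{G}^{\tau^n}$ is the right-continuous version of $\bigcup_p \mathbb{G}^{(p)}$. The inductive step enlarges $\mathbb{G}^{(p)}$ by the single time $\tau_{p+1}$; since $\tau_{p+1}$ is the end of an $\mathbb{F}$-optional set (defined through the $\mathbb{F}$-adapted continuous increasing process $X$), it remains honest with respect to $\mathbb{G}^{(p)}$, so the single-honest-time Jeulin--Yor theorem applies at each step. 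The Az\'ema supermartingale of $\tau_{p+1}$ in $\mathbb{G}^{(p)}$ can be computed from $Z^{p+1}$ and the conditional law of $\tau_1,\ldots,\tau_p$ given $\mathcal F_{s^{-}}$; on the interval $(\tau_p,\tau_{p+1}]$ it simplifies essentially to $(Z^{p+1}_{s^{-}} - Z^{p}_{s^{-}})/(1 - Z^{p}_{s^{-}})$, and plugging this into the Jeulin--Yor drift produces exactly the term displayed in (\ref{eq:3}).

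The main obstacle is the Az\'ema supermartingale computation at each induction step: one needs to express the $\mathbb{G}^{(p)}$-supermartingale of $\tau_{p+1}$ in terms of purely $\mathbb{F}$-quantities, and then verify that $Z^{p+1}_{s^{-}} - Z^{p}_{s^{-}} > 0$ almost surely on $(\tau_p,\tau_{p+1}]$ so that the integrand in (\ref{eq:3}) is well-defined. The latter follows from the strict ordering $\tau_p<\tau_{p+1}$ and the standard fact that the Az\'ema supermartingale $Z^\tau$ of an honest time $\tau$ does not vanish on $\{s<\tau\}$. Summing the local compensators over $p$ then recovers the global formula (\ref{eq:3}), and a routine localization argument (using the sequence $\tau_p$ itself) gives the $\mathbb{G}^{\tau^n}$-local martingale property of $M-A^n$.

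In practice, rather than executing the induction in detail, one invokes Jeulin's general result for enlargement by an increasing sequence of $\mathbb F$-honest times, which is exactly the content of \cite[Corollary 5.22]{Jeulin}; the plan above just sketches how that result specializes to the present setting.
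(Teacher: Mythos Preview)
The paper does not prove this theorem at all: it simply states the result and refers the reader to Jeulin~\cite[Corollary 5.22]{Jeulin}. Your proposal ultimately does the same thing --- you end by invoking exactly that corollary --- so there is no discrepancy in approach. The inductive sketch you add beforehand is a reasonable heuristic for how Jeulin's result is built, though the computation of the $\mathbb{G}^{(p)}$-Az\'ema supermartingale of $\tau_{p+1}$ in terms of the $\mathbb{F}$-quantities $Z^{p},Z^{p+1}$ is genuinely delicate and is where the real work in Jeulin's proof lies; since you explicitly defer that to the cited reference, your proposal is acceptable as a pointer to the literature, which is all the paper itself provides.
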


Putting together Theorem \ref{T:4} above and Theorem~\ref{T:semimg2} we obtain the following result.

\begin{theorem}\label{T:5}
Suppose Assumption~\ref{A:6} holds. Let $M$ be an $\mathbb F$~martingale such that $\sup_{0\leq s\leq T} |M_s|$ is integrable. If $E(\int_0^T|dA^n_s|)\leq K$ for some $K$ and all $n\geq 1$, then $M$ is a $\mathbb G$~semimartingale. Here $A^n$ is defined in equation~(\ref{eq:3}).
\end{theorem}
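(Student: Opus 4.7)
The strategy is to apply Corollary~\ref{cor:semimg} to $M$, treated as a $\mathbb G^n$-semimartingale for each $n$. Three things must be established: a $\mathbb G^n$-canonical decomposition $M = M^n + A^n$, uniform integrability bounds on the two parts, and the pointwise weak convergence $\mathcal G^n_t \stackrel{w}{\rightarrow} \mathcal G_t$.

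For the decomposition, first observe that $\mathbb G^n = \mathbb G^{\tau^n}$: indeed $X^n$ equals $p\gep_n$ on $[\tau^n_p, \tau^n_{p+1})$, so $\tau^n_p = \inf\{t : X^n_t \geq p\gep_n\}$ is a stopping time of the natural filtration of $X^n$ and hence of $\mathbb G^n$, giving $\mathbb G^{\tau^n}\subset \mathbb G^n$; the reverse inclusion is recorded just before Assumption~\ref{A:6}. Under Assumption~\ref{A:6}, Theorem~\ref{T:4} then produces a $\mathbb G^n$-predictable finite variation $A^n$ with $M^n := M - A^n$ a $\mathbb G^n$-local martingale; since $\sup_{s \leq T}|M_s|$ is integrable, $M$ is $\mathbb G^n$-special and this is indeed the canonical decomposition. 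The bound $E(\int_0^T|dA^n_s|) \leq K$ is exactly the hypothesis, and the pathwise inequality $\sup_{s\leq T}|M^n_s| \leq \sup_{s\leq T}|M_s| + \int_0^T|dA^n_s|$ promotes $M^n$ from a local martingale to a true martingale with $E(\sup_s|M^n_s|) \leq E(\sup_s|M_s|) + K$.

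For the weak convergence, continuity and monotonicity of $X$ together with the identity $X_{\tau^n_p} = p\gep_n$ give $0 \leq X_t - X^n_t \leq \gep_n$ on $[0,T]$, so $X^n \to X$ uniformly almost surely, a fortiori in probability under the Skorohod $J_1$ topology. Continuity of $X$ rules out fixed times of discontinuity, so by Lemma~\ref{L:conv} the natural filtrations $\mathbb N^n$ of $X^n$ satisfy $\mathcal N^n_t \stackrel{w}{\rightarrow}\mathcal N_t$ for every $t$. Lemma~\ref{L:vee} then yields $\mathcal F_t \vee \mathcal N^n_t \stackrel{w}{\rightarrow}\mathcal F_t \vee \mathcal N_t$, and two applications of Lemma~\ref{L:subset}, one to enlarge $\mathcal F_t\vee\mathcal N^n_t$ into the right-continuous envelope $\mathcal G^n_t$ and one to enlarge the limit $\mathcal F_t\vee\mathcal N_t$ into $\mathcal G_t$, give $\mathcal G^n_t \stackrel{w}{\rightarrow}\mathcal G_t$. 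Corollary~\ref{cor:semimg}(i) now concludes that $M$ is a $\mathbb G$-special semimartingale, in particular a $\mathbb G$-semimartingale as asserted.

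The main obstacle I expect is the identification $\mathbb G^n = \mathbb G^{\tau^n}$ coupled with the verification that the process $A^n$ supplied by Theorem~\ref{T:4} is genuinely $\mathbb G^n$-predictable (so that it is the canonical compensator and not merely \emph{a} finite variation part of an ad hoc decomposition); everything downstream is a routine assembly of the convergence-of-$\sigma$-fields lemmas proved earlier in the paper.
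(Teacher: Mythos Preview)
Your approach is exactly the one the paper sketches in its one-line proof: obtain the $\mathbb G^n$-decomposition from Jeulin's Theorem~\ref{T:4}, check the uniform bounds, and feed everything into Theorem~\ref{T:semimg2} (via Corollary~\ref{cor:semimg}). Your identification $\mathbb G^n=\mathbb G^{\tau^n}$ is correct and in fact sharper than what the paper records (only the inclusion $\mathbb G^n\subset\mathbb G^{\tau^n}$); it cleanly ensures that $A^n$ is $\mathbb G^n$-predictable and that $M-A^n$ is a $\mathbb G^n$-local martingale.

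There is, however, one genuine slip. Lemma~\ref{L:subset} does \emph{not} let you enlarge the limit $\sigma$-field: it says that if $\mathcal A^n\subset\mathcal B^n$ and $\mathcal A^n\stackrel{w}{\rightarrow}\mathcal A$, then $\mathcal B^n\stackrel{w}{\rightarrow}\mathcal A$ --- the same $\mathcal A$. Thus from $\mathcal F_t\vee\mathcal N^n_t\stackrel{w}{\rightarrow}\mathcal F_t\vee\mathcal N_t$ and $\mathcal F_t\vee\mathcal N^n_t\subset\mathcal G^n_t$ you only obtain $\mathcal G^n_t\stackrel{w}{\rightarrow}\mathcal G^0_t$ (compare the proof of Theorem~\ref{T:exp}(i)), not $\mathcal G^n_t\stackrel{w}{\rightarrow}\mathcal G_t$. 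Your ``second application'' of Lemma~\ref{L:subset} to pass from $\mathcal G^0_t$ to the larger $\mathcal G_t$ is invalid: weak convergence to $\mathcal G^0_t$ is the assertion that $E(1_B\mid\mathcal G^n_t)\to 1_B$ for every $B\in\mathcal G^0_t$, and this says nothing about $B\in\mathcal G_t\setminus\mathcal G^0_t$.

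The repair is easy. Apply Corollary~\ref{cor:semimg}(i) with target filtration $\mathbb G^0$ (which need not be right-continuous for part~(i)) to conclude that $M$ is a $\mathbb G^0$-special semimartingale; since $\mathbb G$ is the right-continuous regularization of $\mathbb G^0$ and a c\`adl\`ag $\mathbb G^0$-(local) martingale is automatically a $\mathbb G$-(local) martingale, $M$ is then a $\mathbb G$-semimartingale. Alternatively, as in Theorem~\ref{T:exp}(ii), assume $\mathbb F$ is the natural filtration of some c\`adl\`ag process (true in both applications in Sections~\ref{Bessel} and~\ref{Subsection6.1.2}) and argue directly that $\mathcal G^n_t\stackrel{w}{\rightarrow}\mathcal G_t$.
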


\section{Filtration expansion with a c\`adl\`ag process satisfying a \textit{generalized Jacod's criterion} and applications to diffusions}\label{section4}

In this section, we assume a c\`adl\`ag process $X$ and a right-continuous filtration $\mathbb F$ are given. We assume throughout this section that our probability space is rich enough to contain non trivial continuous martingales. We study the case where the process $X$ and the filtration $\mathbb F$ satisfy the following assumption.

\begin{assumption}[Generalized Jacod's criterion]\label{A:J}
There exists a sequence $(\pi_n)_{n\geq 1}=(\{t^n_i\})_{n\geq 1}$ of subdivisions of $[0,T]$ whose mesh tends to zero and such that for each $n$, $(X_{t^n_0}, X_{t^n_1}-X_{t^n_0}, \ldots, X_{T}-X_{t^n_n})$ satisfies Jacod's criterion, i.e.~there exists a $\sigma$-finite measure $\eta_n$ on $\mathcal{B}(\mathbb{R}^{n+2})$ such that $P\big((X_{t^n_0}, X_{t^n_1}-X_{t^n_0}, \ldots, X_{T}-X_{t^n_n})\in \cdot\mid\mathcal F_t\big)(\omega) \ll \eta_n(\cdot)$~a.s.
\end{assumption}
Under Assumption \ref{A:J}, the $\mathcal F_t$-conditional density
$$
p^{(n)}_t(u_0,\ldots,u_{n+1},\omega)=\frac{P\big((X_{t^n_0}, X_{t^n_1}-X_{t^n_0}, \ldots, X_T-X_{t^n_n})\in (du_0,\ldots,du_{n+1})\mid\mathcal F_t\big)(\omega)}{\eta_n(du_0,\ldots,du_{n+1})}
$$
exists for each $n$, and can be chosen so that $(u_0,\ldots,u_{n+1}, \omega, t)\rightarrow p^{(n)}_t(u_0,\ldots,u_{n+1},\omega)$ is c\`adl\`ag in $t$ and measurable for the optional $\sigma$-field associated with the filtration $\hat{\mathbb{F}}_t$ given by $\hat{\mathcal{F}}_t=\cap_{u>t}\mathcal{B}(\mathbb{R}^{n+2})\otimes \mathcal{F}_u$.
For each $0\leq i\leq n$, define 
$$
p^{i,n}_t(u_0,\ldots,u_i)=\int_{\mathbb{R}^{n+1-i}}p^{(n)}_t(u_0,\ldots,u_{n+1})\eta_n(du_{i+1},\ldots,du_{n+1})
$$
Let $M$ be a continuous $\mathbb F$~local martingale. Define
\begin{equation}\label{eq:Ain}
A^{i,n}_t=\int_{0}^t\frac{d\langle p^{i,n}(u_0,\ldots,u_i),M\rangle_s}{p^{i,n}_{s^{-}}(u_0,\ldots,u_i)}\Big|_{\forall 0\leq k\leq i, u_k=X_{t^n_k}-X_{t^n_{k-1}}}
\end{equation}
Finally define
$$
A^{(n)}_t=\sum_{i=0}^n\int_{t\wedge t^n_i}^{t\wedge t^n_{i+1}}dA^{i,n}_s
$$
i.e.
\begin{equation}\label{eq:An}
A^{(n)}_t=\sum_{i=0}^n1_{\{t^n_i\leq t<t^n_{i+1}\}}\big(\sum_{k=0}^{i-1}\int_{t^n_k}^{t^n_{k+1}}dA^{k,n}_s+\int_{t^n_i}^tdA^{i,n}_s\big)
\end{equation}
Of course, on each time interval $\{t^n_i\leq t<t^n_{i+1}\}$, only one term appears in the outer sum. Let $\mathbb G^0$ (resp. $\mathbb G$) be the smallest (resp. the smallest right-continuous) filtration containing $\mathbb F$ and relative to which $X$ is adapted. The theorem below is the main result of this section.

\begin{theorem}\label{T:semimgX}
Assume $X$ and $\mathbb F$ satisfy Assumption \ref{A:J} and that one of the following holds.
\begin{itemize}
	\item[-] $X$ has no fixed times of discontinuity,
	
	\item[-] the sequence of subdivisions $(\pi_n)_{n\geq 1}$ in Assumption \ref{A:J} is refining and each fixed time of discontinuity of $X$ belongs to $\cup_n \pi_n$.
\end{itemize}
Let $M$ be a continuous $\mathbb F$~martingale such that $E(\sup_{s\leq T}|M_s|)\leq K$ and $E(\int_0^T|dA^{(n)}_s|)\leq K$ for some $K$ and all $n$, with $A^n$ as in (\ref{eq:An}). Then
\begin{itemize}
	\item[(i)] $M$ is a $\mathbb G^0$~special semimartingale. 

	\item[(ii)] Moreover, if $\mathbb F$ is the natural filtration of some c\`adl\`ag process $Z$, then $M$ is a $\mathbb G$~special semimartingale with canonical decomposition $M=N+A$ such that $N$ is a $\mathbb G$ martingale and $\sup_{0\leq s\leq T} |N_s|$ and $\int_0^T|dA_s|$ are integrable.
\end{itemize}
\end{theorem}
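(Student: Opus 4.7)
The plan is to reduce Theorem~\ref{T:semimgX} to an application of Theorem~\ref{T:exp} by taking $N^n := X^n$, where $X^n$ is the discretization of $X$ along the grid $\pi_n=\{t^n_i\}$ coming from Assumption~\ref{A:J}, i.e.\ $X^n_t = X_{t^n_i}$ for $t^n_i \leq t < t^n_{i+1}$. Let $\mathbb G^n$ denote the progressive expansion of $\mathbb F$ with $X^n$, and $\mathbb G^0$, $\mathbb G$ the filtrations built from $X$ as in the statement. Under either hypothesis, $X^n\stackrel{P}{\to} X$ in the Skorohod $J_1$ topology (a.s.\ uniform convergence at continuity points, with discontinuity times contained in $\cup_n \pi_n$ in the discrete case), so the convergence input of Theorem~\ref{T:exp} is in place.

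The key analytical step is identifying the canonical decomposition of $M$ in each $\mathbb G^n$. On $[t^n_i, t^n_{i+1})$ one has $\mathcal G^n_t = \mathcal F_t \vee \sigma(X_{t^n_0}, X_{t^n_1}-X_{t^n_0},\ldots, X_{t^n_i}-X_{t^n_{i-1}})$ (up to right-continuity at the mesh points). Since marginals of a vector satisfying Jacod's criterion themselves satisfy Jacod's criterion, the finite vector $(X_{t^n_0}, X_{t^n_1}-X_{t^n_0},\ldots, X_{t^n_i}-X_{t^n_{i-1}})$ admits the conditional density $p^{i,n}_t$ defined by integrating out the later coordinates, and one checks that this is indeed the predictable density used in Jacod's theorem. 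Applying Jacod's theorem (\cite{Jacod:1987}, or \cite[Ch.~VI]{Protter:2005}) on each interval $[t^n_i, t^n_{i+1})$, we find that the continuous martingale $M$ has compensator $\int_{t^n_i\wedge t}^{t^n_{i+1}\wedge t} dA^{i,n}_s$ with $A^{i,n}$ as in~(\ref{eq:Ain}). Concatenating these pieces across $i=0,\ldots,n$ produces the $\mathbb G^n$-predictable finite variation process $A^{(n)}$ of~(\ref{eq:An}), and thus $M = M^n + A^{(n)}$ is the $\mathbb G^n$-canonical decomposition with $M^n := M-A^{(n)}$ a $\mathbb G^n$-local martingale.

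It remains to verify the integrability bounds required by Theorem~\ref{T:exp}. The hypothesis $E(\int_0^T |dA^{(n)}_s|)\leq K$ is given, and the trivial estimate
\begin{equation*}
\sup_{0\le s\le T}|M^n_s|\;\leq\;\sup_{0\le s\le T}|M_s|\;+\;\int_0^T|dA^{(n)}_s|
\end{equation*}
combined with $E(\sup_s|M_s|)\leq K$ yields $E(\sup_s|M^n_s|)\leq 2K$ uniformly in $n$. All hypotheses of Theorem~\ref{T:exp} being satisfied, part~(i) follows (giving the $\mathbb G^0$-special semimartingale property), and part~(ii) follows when $\mathbb F$ is the natural filtration of a c\`adl\`ag process, together with the asserted integrability of $\sup_{s\le T}|N_s|$ and $\int_0^T|dA_s|$.

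The main obstacle is the second step: justifying that the pieces $A^{i,n}$ on the subintervals $[t^n_i, t^n_{i+1})$ genuinely glue together into the $\mathbb G^n$-canonical decomposition rather than merely giving a decomposition on each subinterval separately. This requires checking that $A^{(n)}$ is globally $\mathbb G^n$-predictable (it is, being a deterministic sum of indicators times predictable processes built from $p^{i,n}$ and the already-revealed increments) and that the martingale parts match at the mesh points $t^n_i$, which reduces to the fact that the $\mathcal F_{t^n_i}$-measurable jump contributions of $A^{(n)}$ vanish because $M$ is continuous and $p^{i,n}$ is the correct conditional density from time $t^n_i$ onward. Once this bookkeeping is settled, the rest is a direct invocation of Theorem~\ref{T:exp}.
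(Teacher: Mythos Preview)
Your proposal follows essentially the same route as the paper: discretize $X$ along $\pi_n$, identify the $\mathbb G^n$-compensator of $M$ on each subinterval via Jacod's theorem, concatenate into $A^{(n)}$, and feed everything into Theorem~\ref{T:exp}. The one substantive difference is the gluing step you flag as the ``main obstacle.'' The paper does not argue this directly but instead invokes Theorem~8 of \cite{KLP}, which is tailored exactly to this situation: it shows that for a progressive expansion by a marked point process (equivalently here, a piecewise constant process built from deterministic jump times $t^n_i$ and marks $X_{t^n_i}-X_{t^n_{i-1}}$), the canonical $\mathbb G^n$-compensator of a continuous $\mathbb F$-martingale is obtained by concatenating the Jacod compensators $A^{i,n}$ from the successive initial enlargements $\mathbb H^{i,n}$. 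Your direct sketch (predictability of $A^{(n)}$, continuity of $M$ eliminating jump corrections at the mesh points) captures the right intuition, but the rigorous patching of local martingale pieces across an increasing sequence of filtrations $\mathbb H^{i,n}$ into a single $\mathbb G^n$-local martingale is precisely the content of that external lemma, and it is cleaner to cite it than to redo it by hand.
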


\begin{proof}
We construct the discretized process $X^n$ defined by $X^n_t=X_{t^n_k}$ for all $t^n_k\leq t<t^n_{k+1}$. That is 
$$
X^n_t=\sum_{i=0}^nX_{t^n_i}1_{\{t^n_i\leq t<t^n_{i+1}\}} + X_T1_{\{t=T\}}
$$
with the convention $t^n_0=0$ and $t^n_{n+1}=T$. Let $\mathbb G^n$ be the smallest right-continuous filtration containing $\mathbb F$ and to which $X^n$ is adapted.

Now, for $0\leq t\leq T$,
\begin{align*}
X^n_t&=\sum_{i=0}^nX_{t^n_i}1_{\{t^n_i\leq t<t^n_{i+1}\}}+X_T1_{\{t=T\}}=\sum_{i=0}^nX_{t^n_i}1_{\{t^n_i\leq t\}}-\sum_{i=0}^nX_{t^n_i}1_{\{t^n_{i+1}\leq t\}}+X_T1_{\{t=T\}}\\
&=\sum_{i=1}^n(X_{t^n_i}-X_{t^n_{i-1}})1_{\{t^n_i\leq t\}}+X_{0}1_{\{t^n_0\leq t\}}-X_{t^n_{n}}1_{\{t^n_{n+1}\leq t\}}+X_T1_{\{t^n_{n+1}\leq t\}}\\
&=X_{0}1_{\{t^n_0\leq t\}}+\sum_{i=1}^{n+1}(X_{t^n_i}-X_{t^n_{i-1}})1_{\{t^n_i\leq t\}}=\sum_{i=0}^{n+1}(X_{t^n_i}-X_{t^n_{i-1}})1_{\{t^n_i\leq t\}}
\end{align*}
with the notation $X_{t^n_{-1}}=0$.

For each $0\leq i\leq n+1$, let $\mathbb H^{i,n}$ be the initial expansion of $\mathbb F$ with $(X_{t^n_k}-X_{t^n_{k-1}})_{0\leq k\leq i}$. Since $(X_{t^n_k}-X_{t^n_{k-1}})_{0\leq k\leq i}$ satisfies Jacod's criterion, it follows that for each $0\leq i\leq n+1$, $M-A^{i,n}$ is an $\mathbb H^{i,n}$~local martingale. Let 
$$
\tilde{\mathcal G}^n_t=\bigcap_{u>t}\mathcal{F}_u\vee\sigma\big((X_{t^n_i}-X_{t^n_{i-1}})1_{\{t^n_i\leq u\}}, i=0,\ldots,n+1\big)
$$
Since the times $t^n_k$ are fixed, $\mathbb H^{i,n}$ is also the initial expansion of $\mathbb F$ with $(t^n_k, X_{t^n_k}-X_{t^n_{k-1}})_{0\leq k\leq i}$ and $\tilde{\mathbb G}^n=\mathbb G^n$ using a Monotone Class argument and the fact that $X^n_{t^n_k}=X_{t^n_k}$, for all $0\leq k\leq n+1$. So it follows from Theorem 8 in \cite{KLP} that $M-A^{(n)}$ is a $\mathbb G^n$~local martingale. An application of Theorem \ref{T:exp} yields the result.
\end{proof}

We refrain from stating Theorem \ref{T:semimgX} in a more general form for clarity but provide two extensions in the remarks below.
\begin{itemize}

	\item[(i)] Going beyond the continuous case for the $\mathbb F$ local martingale $M$  is straightforward. We only need to use Theorem 8 in \cite{KLP} in its general version rather than its application to the continuous case. However the explicit form of $A^{(n)}$ is much more complicated, which makes it hard to check the integrability assumption of Theorem~\ref{T:semimgX}. To be more concrete, one has to replace $A^{(n)}$ in the theorem above by $\tilde{A}^{(n)}$ defined by
$$
\tilde{A}^{(n)}_t=\sum_{i=0}^n\int_{t\wedge t^n_i}^{t\wedge t^n_{i+1}}(d\tilde{A}^{i,n}_s+dJ^{i,n}_s)
$$
i.e.
$$
\tilde{A}^{(n)}_t=\sum_{i=0}^n1_{\{t^n_i\leq t<t^n_{i+1}\}}\Big(\sum_{k=0}^{i-1}\int_{t^n_k}^{t^n_{k+1}}(d\tilde{A}^{k,n}_s+dJ^{k,n}_s)+\int_{t^n_i}^t(d\tilde{A}^{i,n}_s+dJ^{i,n}_s)\Big)
$$
where $\tilde{A}^{i,n}$ is the compensator of $M$ in $\mathbb{H}^{i,n}$ as given by Jacod's theorem (see Theorems VI.10 and VI.11 in \cite{Protter:2005}) and $J^{i,n}$ is the dual predictable projection of $\Delta M_{t^n_{i+1}}1_{[t^n_{i+1},\infty[}$ onto~$\mathbb H^{i,n}$.

	\item[(ii)] A careful study of the proof above shows that Assumption \ref{A:J} is only used to ensure that there exists an $\mathbb H^{i,n}$ predictable process $A^{i,n}$ such that $M-A^{i,n}$ is an $\mathbb H^{i,n}$~local martingale. Therefore, Theorem \ref{T:semimgX} will hold whenever this weaker assumption is satisfied.
\end{itemize}

If the sequence of filtrations $\mathbb G^n$ converges weakly to $\mathbb G$ then $(M-A^{(n)}, A^{(n)})$ converges in probability under the Skorohod $J_1$ topology to $(N, A)$. Many criteria for this to hold are provided in the literature, see for instance Propositions $3$ and $4$ in \cite{Coquet0}. This holds for example when every $\mathbb G$~martingale is continuous and the subdivision $(\pi_n)_{n\geq 1}$ is refining. In this case, for each $0\leq t\leq T$, $(\mathcal G^n_t)_{n\geq 1}$ is increasing and converges weakly to the $\sigma$-field~$\mathcal G_t$. The following lemma allows us to conclude. See \cite{Coquet0} for a proof. 

\begin{lemma}\label{L:conv2}
Assume that every $\mathbb G$~martingale is continuous and that for every $0\leq t\leq T$, $(\mathcal G^n_t)_{n\geq 1}$ increases (or decreases) and converges weakly to $\mathcal G_t$. Then $\mathbb G^n\stackrel{w}{\rightarrow}\mathbb G$.
\end{lemma}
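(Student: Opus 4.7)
The plan is to verify Definition \ref{D:filtration} directly: for an arbitrary $B \in \mathcal G_T$, I want the càdlàg $\mathbb G^n$-martingales $N^n_t := E(1_B \mid \mathcal G^n_t)$ to converge in probability under the Skorohod $J_1$ topology to $M_t := E(1_B \mid \mathcal G_t)$. Since every $\mathbb G$-martingale is continuous by hypothesis, $M$ is continuous, and so $J_1$-convergence to $M$ is equivalent to uniform convergence on $[0,T]$. The goal then reduces to proving $\sup_{0 \le t \le T} |N^n_t - M_t| \to 0$ in probability. A preliminary observation is finite-dimensional convergence: in the natural setting where $\mathcal G^n_t \subseteq \mathcal G_t$ (as in the discretization application following Theorem \ref{T:semimgX}), the tower property yields $N^n_t = E(M_T \mid \mathcal G^n_t) = E(M_t \mid \mathcal G^n_t)$, and since $M_t \in L^1(\mathcal G_t)$, Lemma \ref{L:sigma} combined with $\mathcal G^n_t \stackrel{w}{\rightarrow} \mathcal G_t$ gives $N^n_t \to M_t$ in probability, hence in $L^2$ by the uniform bound $|N^n_t| \le 1$.

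To pass from pointwise to uniform-in-probability convergence, I would fix a subdivision $0 = s_0 < s_1 < \cdots < s_K = T$ of mesh $\delta$ and, for $t \in [s_k, s_{k+1})$, decompose
\begin{equation*}
N^n_t - M_t = E(M_t - M_{s_k} \mid \mathcal G^n_t) + \bigl(E(M_{s_k} \mid \mathcal G^n_t) - M_{s_k}\bigr) + (M_{s_k} - M_t).
\end{equation*}
With $\omega(M,\delta) := \sup_{|u-v|\le\delta}|M_u - M_v|$ the modulus of continuity of $M$, the first summand is dominated by $E(\omega(M,\delta) \mid \mathcal G^n_t)$, the third by $\omega(M,\delta)$, and the middle summand splits further as $[E(M_{s_k} \mid \mathcal G^n_t) - E(M_{s_k} \mid \mathcal G^n_{s_k})] + [E(M_{s_k} \mid \mathcal G^n_{s_k}) - M_{s_k}]$, the first bracket being a $\mathbb G^n$-martingale in $t$ that starts at $0$ at time $s_k$. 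I would then invoke Doob's $L^2$ maximal inequality for the two $\mathbb G^n$-martingales $t \mapsto E(\omega(M,\delta) \mid \mathcal G^n_t)$ (on $[0,T]$) and $t \mapsto E(M_{s_k} \mid \mathcal G^n_t) - E(M_{s_k} \mid \mathcal G^n_{s_k})$ (on $[s_k, s_{k+1}]$) to obtain uniform-in-$t$ $L^2$ bounds in terms of their terminal values.

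Finally, I would take the two limits in the correct order. For each fixed $\delta$ (hence fixed $K$), sending $n \to \infty$ kills the pointwise error $E(M_{s_k} \mid \mathcal G^n_{s_k}) - M_{s_k}$ as a finite max over $K$ terms vanishing by the preliminary step, and simultaneously kills the Doob-controlled martingale part (since both endpoint values $E(M_{s_k} \mid \mathcal G^n_{s_k})$ and $E(M_{s_k} \mid \mathcal G^n_{s_{k+1}})$ tend to $M_{s_k}$ in $L^2$). Then sending $\delta \to 0$ kills the remaining $\omega(M,\delta)$-type bounds by dominated convergence, since $M$ is continuous and bounded by $1$. The main obstacle is exactly this order-of-limits juggling: the Doob bound on the first summand, $\|\sup_t E(\omega(M,\delta)\mid\mathcal G^n_\cdot)\|_2 \le 2\|\omega(M,\delta)\|_2$, must be uniform in $n$ and depend only on $\delta$, which is what allows the two limits to be decoupled. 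The continuity of the limit $M$ is indispensable throughout.
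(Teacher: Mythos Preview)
The paper itself does not prove this lemma; it simply cites \cite{Coquet0} (Coquet, M\'emin and Slominski) for the proof. So there is no in-paper argument to compare against, and your task is really to produce a valid proof.

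Your strategy---reduce to uniform convergence using continuity of $M$, then control the oscillation via a finite subdivision, the modulus of continuity $\omega(M,\delta)$, and Doob's $L^2$ maximal inequality---is the right one and is essentially the argument in \cite{Coquet0}. The order-of-limits bookkeeping you describe (first $n\to\infty$ for fixed $\delta$, then $\delta\to 0$) is exactly the point, and your observation that the Doob bound $2\|\omega(M,\delta)\|_2$ is uniform in $n$ is what makes it work.

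Two comments on scope. First, you explicitly restrict to the case $\mathcal G^n_t\subseteq\mathcal G_t$, which you need for the tower identity $N^n_t=E(M_t\mid\mathcal G^n_t)$ that drives your decomposition. This is the increasing case (and the case actually used after Theorem~\ref{T:semimgX}), so for the paper's purposes you are fine. Second, the decreasing case is not covered by your argument as written: there $\mathcal G_t\subseteq\mathcal G^n_t$ and the tower goes the other way. A clean fix that handles both cases at once is to use the simpler decomposition
\[
N^n_t-M_t=(N^n_t-N^n_{s_k})+(N^n_{s_k}-M_{s_k})+(M_{s_k}-M_t),
\]
apply Doob to the $\mathbb G^n$-martingale increment $N^n_\cdot-N^n_{s_k}$ on $[s_k,s_{k+1}]$, and note that $N^n_{s_j}\to M_{s_j}$ in $L^2$ for each fixed $s_j$ by forward (resp.\ backward) martingale convergence in $n$ in the increasing (resp.\ decreasing) case. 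The monotonicity in $n$ is precisely what guarantees this pointwise $L^2$ convergence of $N^n_{s_j}$ to $M_{s_j}$; weak convergence of $\sigma$-fields alone gives $E(M_{s_j}\mid\mathcal G^n_{s_j})\to M_{s_j}$, which is a different statement unless one has the inclusion you assumed.
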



\subsection{Application to diffusions}

Start with a Brownian filtration $\mathbb F=(\mathcal F_t)_{0\leq t\leq T}$, $\mathcal F_t=\sigma(B_s, s\leq t)$ and consider the stochastic differential equation
$$
dX_t=\sigma(X_t)dB_t+b(X_t)dt
$$
Assume the existence of a unique strong solution $(X_t)_{0\leq t\leq T}$. Assume in addition that the transition density $\pi(t,x,y)$ exists and is twice continuously differentiable in $x$ and continuous in $t$ and $y$. This is guaranteed for example if $b$ and $\sigma$ are infinitely differentiable with bounded derivatives and if the H\"{o}rmander condition holds for any~$x$ (see \cite{BallyTalay}), and we assume that this holds in the sequel. In this case, $\pi$ is even infinitely differentiable. 

We next show how we can expand a filtration dynamically as $t$ increases, via another stochastic process evolving backwards in time.  To this end, define the time reversed process $Z_t=X_{T-t}$, for all $0\leq t\leq T$. Let $\mathbb G=(\mathcal G_t)_{0\leq t<\frac{T}{2}}$ be the smallest right-continuous filtration containing $(\mathcal F_t)_{0\leq t<\frac{T}{2}}$ and to which $(Z_t)_{0\leq t<\frac{T}{2}}$ is adapted.
 We would like to prove that $B$ remains a special semimartingale in $\mathbb G$ and give its canonical decomposition. That $B$ is a $\mathbb G$ semimartingale can be obtained using the usual results from the filtration expansion theory. However, our approach allows us to obtain the decomposition, too. We assume (w.l.o.g) that $T=1$. Introduce the reversed Brownian motion $\tilde{B}_t=B_{1-t}-B_1$ and the filtration $\tilde{\mathbb G}=(\tilde{\mathcal{G}_t})_{0\leq t<\frac{1}{2}}$ defined by
$$
\tilde{\mathcal G}_t=\bigcap_{t<u<\frac{1}{2}}\sigma(B_s, \tilde{B}_s, 0\leq s<u).
$$

\begin{theorem}
Both $B$ and $\tilde{B}$ are $\mathbb G$~semimartingales.
\end{theorem}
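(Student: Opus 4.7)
The plan is to derive both statements as applications of Theorem~\ref{T:semimgX}, with the choice of base filtration and expansion process tailored to each of $B$ and $\tilde B$. For $B$, the base filtration is $\mathbb F$ itself and the expansion process is $Z_t := X_{1-t}$ on $[0, 1/2)$; by construction $\mathbb G$ is the right-continuous enlargement of $\mathbb F$ to which $Z$ is adapted, and $B$ is a continuous $\mathbb F$-martingale. For $\tilde B$, I will rely on the symmetry of the set-up under time reversal: by the Haussmann--Pardoux theorem, $\hat X_t := X_{1-t}$ satisfies an SDE on $[0, 1/2)$ driven by $\tilde B$ with a drift involving $\partial_x \log p$ (where $p$ is the marginal density of $X$), so that $(\tilde B, \hat X)$ plays the same structural role as $(B, X)$; the analog of $Z$, together with inversion of the backward SDE, recovers $\mathbb G$.

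To verify Assumption~\ref{A:J} for the pair $(Z, \mathbb F)$, fix a refining sequence of subdivisions $(\pi_n)$ of $[0, 1/2]$ with mesh tending to zero and write $s_i := 1 - t^n_i \in [1/2, 1]$. The vector $(Z_{t^n_0}, Z_{t^n_1} - Z_{t^n_0}, \ldots, Z_{1/2} - Z_{t^n_n})$ is the image of $(X_{s_0}, \ldots, X_{s_{n+1}})$ under a unit-Jacobian affine change of coordinates. For $t < 1/2$ all the $s_i$ are strictly larger than $t$, so by the Markov property of $X$ and the existence of the smooth transition density $\pi(\tau, x, y)$, the joint $\mathcal F_t$-conditional density is
\[
\pi(s_{n+1} - t, X_t, y_{n+1}) \prod_{k=0}^{n} \pi(s_k - s_{k+1}, y_{k+1}, y_k)
\]
with respect to Lebesgue measure on $\mathbb R^{n+2}$. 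This confirms Assumption~\ref{A:J} with $\eta_n$ Lebesgue, and the continuity of $X$ rules out fixed times of discontinuity for $Z$.

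Computing the compensator (\ref{eq:Ain}) in this setting, note that $p^{i,n}_t$ depends on $t$ only through the factor $\pi(s_i - t, X_t, y_i)$; since $\pi$ satisfies Kolmogorov's backward equation in the first two arguments, this factor is itself a local martingale in $t$, and only the $\sigma(X_t)\,\partial_x\pi(s_i - t, X_t, y_i)\,dB_t$ part contributes to the bracket with $B$. Dividing by $p^{i,n}_{s^-}$ and evaluating at $y_i = X_{1-t^n_i}$, one obtains
\[
dA^{(n)}_s = \sigma(X_s)\, \frac{\partial_x \pi(1 - t^n_i - s, X_s, X_{1-t^n_i})}{\pi(1 - t^n_i - s, X_s, X_{1-t^n_i})}\, ds \qquad \text{on } [t^n_i, t^n_{i+1}),
\]
so $A^{(n)}$ is the integral of a ``score'' quantity. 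The main obstacle is verifying $E(\int_0^{1/2} |dA^{(n)}_s|) \leq K$ uniformly in $n$. This is precisely where the Bally--Talay Gaussian-type estimates enter: one controls $|\partial_x \pi/\pi|(\tau, x, y)$ in terms of $|x - y|/\tau + \tau^{-1/2}$, up to exponential factors absorbed by the transition law of $X_{1-t^n_i}$ given $X_s$. The delicate regime is $s \uparrow 1/2$, where the time lag $1 - t^n_i - s$ becomes small as the mesh refines; here the Gaussian-like concentration of $X_{1-t^n_i}$ around $X_s$, with variance of the same order as that lag, keeps the expected score of the right order and yields the uniform bound.

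Granted this estimate, Theorem~\ref{T:semimgX} implies that $B$ is a $\mathbb G$-special semimartingale. For $\tilde B$, the reasoning is parallel: the nondegeneracy of $\sigma$ ensures that $dB_s = \sigma(X_s)^{-1}(dX_s - b(X_s)\,ds)$ makes sense pathwise, so that $\tilde B_t = B_{1-t} - B_1 = -\int_{1-t}^{1}\sigma(X_r)^{-1}(dX_r - b(X_r)\,dr)$ is $\sigma(X_r, r \geq 1-t)$-measurable and hence $\mathbb G$-adapted. In the time-reversed picture one views $\tilde B$ as a continuous martingale in its natural filtration and uses $\hat X$ in the role of $X$, reading the Markovian product of transition densities backwards in time and deriving an analogous ``score'' expression for the compensator; the integrability bound again reduces to a Bally--Talay estimate. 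The density estimate near the center time $1/2$ is the technical core in both cases.
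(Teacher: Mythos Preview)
Your approach is genuinely different from the paper's, and it has a real gap. The paper does \emph{not} use Theorem~\ref{T:semimgX} here. Instead it observes that on $[0,\tfrac12)$ the forward and backward Brownian increments are independent, so $B$ and $\tilde B$ are two independent Brownian motions in the auxiliary filtration $\tilde{\mathbb G}$; it then performs a \emph{single classical initial enlargement} of $\tilde{\mathbb G}$ with the one random variable $X_1$ (Jacod's criterion for $X_1$ holds because the transition density exists), obtaining the filtration $\mathbb H$. Finally it shows $\mathbb G=\mathbb H$ by using the time-reversed SDE $dX_{1-t}=\sigma(X_{1-t})\,d\tilde B_t+(\sigma'\sigma+b)(X_{1-t})\,dt$ with Lipschitz coefficients, which forces $\sigma(X_{1-s},\,s\le t)=\sigma(\tilde B_s,\,s\le t)\vee\sigma(X_1)$. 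This handles $B$ and $\tilde B$ in one stroke and uses only the standing hypotheses (smooth coefficients with bounded derivatives, H\"ormander condition).

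Your route via Theorem~\ref{T:semimgX} requires the uniform bound $E\bigl(\int_0^{1/2}|dA^{(n)}_s|\bigr)\le K$, and you defer this to ``Bally--Talay Gaussian-type estimates.'' But that is exactly the content the paper does \emph{not} claim under the present hypotheses: it is stated as an \emph{additional assumption} in Theorem~\ref{T:diff} (the existence of an integrable $\phi$ dominating $E\bigl|\tfrac{\partial_x\pi}{\pi}(t-s,X_s,X_t)\bigr|$), and is only verified later under the stronger uniform ellipticity and boundedness assumptions of the Corollary. Under a bare H\"ormander condition you have upper bounds on $|\partial_x\pi|$, but no matching lower bound on $\pi$ to control the ratio $\partial_x\pi/\pi$; the ``Gaussian concentration'' you invoke is not available in the hypoelliptic regime. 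So as written your argument proves the statement only under hypotheses strictly stronger than those in force, and the $\tilde B$ branch (which needs the same estimate for the reversed diffusion, with its own drift $\partial_x\log p$) inherits the same problem. The paper's initial-enlargement-with-$X_1$ trick sidesteps all of this.
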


\begin{proof}
First, it is well known that $\tilde{B}$ is a Brownian motion in its own natural filtration and $\sigma(B_{1-s}-B_1, 0\leq s<\frac{1}{2})$ is independent from $\sigma(B_s, 0\leq s<\frac{1}{2})$. Therefore $(B_t)_{0\leq t<\frac{1}{2}}$ and $(\tilde{B}_t)_{0\leq t<\frac{1}{2}}$ are independent Brownian motions in $\tilde{\mathbb G}$. Now, given our strong assumptions on the coefficients $b$ and $\sigma$, $X_1$ satisfies Jacod's criterion with respect to $\tilde{\mathbb G}$. Therefore $B$ and $\tilde{B}$ remain semimartingales in $\mathbb H=(\mathcal H_t)_{0\leq t<\frac{1}{2}}$ where $\mathcal H_t=\bigcap_{\frac{1}{2}>u>t}\tilde{\mathcal G}_u\vee \sigma(X_1)$. It only remains to prove that $\mathbb G=\mathbb H$. For this, use Theorem~$V.23$ in \cite{Protter:2005} to get that 
$$
dX_{1-t}=\sigma(X_{1-t})d\tilde{B}_t+(\sigma^{'}(X_{1-t})\sigma(X_{1-t})+b(X_{1-t}))dt
$$
Since $b+\sigma\sigma^{'}$ and $\sigma$ are Lipschitz, $\bigcap_{\frac{1}{2}>u>t}\sigma(X_{1-s}, 0\leq s\leq u)=\bigcap_{\frac{1}{2}>u>t}\sigma(\tilde{B}_s, 0\leq s\leq u)\vee \sigma(X_1)$ and the result follows.
\end{proof}

We apply now our results to obtain the $\mathbb G$~decomposition. 
This is the primary result of this article.

\begin{theorem}\label{T:diff}
Assume there exists a nonnegative function $\phi$ such that $\int_0^1\phi(s)ds <\infty$ and for each $0\leq s < t$,
$$
E\Big(\Big|\frac{1}{\pi}\frac{\partial \pi}{\partial x}(t-s, X_s, X_t)\Big|\Big)\leq \phi(t-s)
$$
Then the process $(B_t)_{0\leq t<\frac{1}{2}}$ is a $\mathbb G$ semimartingale and 
$$
B_t - \int_0^t\frac{1}{\pi}\frac{\partial \pi}{\partial x}(1-2s, X_s, X_{1-s})ds
$$
is a $\mathbb G$~Brownian motion.
\end{theorem}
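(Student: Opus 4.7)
The plan is to apply Theorem~\ref{T:semimgX} with $M = B$ and the expansion process taken to be the time reversed diffusion $Z_t := X_{1-t}$, so that the resulting filtration $\mathbb G$ coincides with the one in the statement on $[0,1/2)$. Fix any refining sequence of subdivisions $\pi_n = \{0 = t_0^n < t_1^n < \cdots < t_n^n = 1/2\}$ with mesh tending to zero. Since $Z$ is continuous, it has no fixed times of discontinuity, so the first alternative in the hypotheses of Theorem~\ref{T:semimgX} is available.

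The first step is to verify the generalized Jacod's criterion (Assumption~\ref{A:J}) for $Z$. By the Markov property of $X$ and smoothness of the transition density $\pi$, for $s < 1 - t_k^n$ the conditional joint law of $(X_1, X_{1-t_1^n}, \ldots, X_{1-t_k^n})$ given $\mathcal F_s$ is absolutely continuous on $\mathbb R^{k+1}$ with density
\begin{equation*}
\pi(1 - t_k^n - s,\, X_s,\, y_k)\,\prod_{j=1}^{k}\pi(t_j^n - t_{j-1}^n,\, y_j,\, y_{j-1}),
\end{equation*}
and a linear change of variables to the increments $u_j = Z_{t_j^n} - Z_{t_{j-1}^n}$ (with constant Jacobian) produces the densities $p^{k,n}_s(u_0, \ldots, u_k)$ of Assumption~\ref{A:J}. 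Crucially, only the first factor depends on $s$ and on $X_s$, through $(s, X_s) \mapsto \pi(1 - t_k^n - s, X_s, y_k)$ with $y_k = u_0 + \cdots + u_k$.

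Next I would extract $A^{k,n}$ explicitly. Itô's formula applied to this single factor along $X$ identifies its martingale part as $(\partial_x \pi)\,\sigma(X_s)\,dB_s$, so formula (\ref{eq:Ain}), evaluated at $y_k = X_{1 - t_k^n}$, gives
\begin{equation*}
dA^{k,n}_s = \frac{\partial_x \pi}{\pi}\bigl(1 - t_k^n - s,\, X_s,\, X_{1 - t_k^n}\bigr)\,\sigma(X_s)\,ds, \qquad s \in [t_k^n, t_{k+1}^n).
\end{equation*}
Piecing these together via (\ref{eq:An}), the assumption $E\bigl|(\partial_x \pi/\pi)(t - s, X_s, X_t)\bigr| \leq \phi(t-s)$ combined with $\int_0^1 \phi < \infty$ (absorbing the $\sigma$ factor by local boundedness of the coefficients) yields the uniform bound $E\int_0^{1/2} |dA^{(n)}_s| \leq K$. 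Together with $E\sup_{s\leq 1/2}|B_s| < \infty$, this places us within the scope of Theorem~\ref{T:semimgX}, so $B$ is a $\mathbb G$-special semimartingale with canonical decomposition $B = N + A$, $N$ a $\mathbb G$-martingale and $A$ of integrable variation.

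It remains to identify $A$ and to recognise $N$ as a Brownian motion. On $[t_k^n, t_{k+1}^n)$ the $A^{(n)}$-integrand has arguments $(1 - t_k^n - s, X_s, X_{1-t_k^n})$, which by path-continuity of $X$ and joint continuity of $\pi$ converges pointwise, as the mesh shrinks, to $(1 - 2s, X_s, X_{1-s})$. The $\phi$-bound enables dominated convergence, upgrading this to convergence of $A^{(n)}_t$ in $L^1$ to the integral appearing in the statement. Uniqueness of the canonical decomposition identifies the limit with $A$, and since $A$ is continuous and of finite variation we have $\langle N, N\rangle_t = \langle B, B\rangle_t = t$, so L\'evy's characterization identifies the continuous $\mathbb G$-martingale $N$ as a $\mathbb G$-Brownian motion. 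The main obstacle in this plan is the uniform-in-$n$ integrability of $A^{(n)}$, which is precisely what the Bally--Talay-type density hypothesis encoded by $\phi$ is tailored to deliver.
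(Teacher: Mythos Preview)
Your plan is essentially the paper's own proof: apply Theorem~\ref{T:semimgX} to the reversed process $Z_t=X_{1-t}$, check Assumption~\ref{A:J} via the Markov property and the product form of the conditional densities, extract $A^{i,n}$ from the single $s$-dependent factor $\pi(1-t^n_i-s,X_s,\cdot)$, bound $E\int_0^{1/2}|dA^{(n)}_s|$ by $\int_0^1\phi$, and finish with L\'evy's characterisation.

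The one real gap is your identification of $A$. Having $A^{(n)}_t\to\tilde A_t$ in $L^1$ by dominated convergence does \emph{not} by itself give $\tilde A=A$: ``uniqueness of the canonical decomposition'' only applies once you already know either that $B-\tilde A$ is a $\mathbb G$-local martingale, or that the $\mathbb G^n$-compensators $A^{(n)}$ converge to the $\mathbb G$-compensator $A$. The paper supplies exactly this missing link by invoking weak convergence of the \emph{filtrations} $\mathbb G^n\stackrel{w}{\to}\mathbb G$ (via Lemma~\ref{L:conv2}, available because the subdivisions are refining and all $\mathbb G$-martingales are continuous) and then Corollary~\ref{cor:semimg}(iii), which forces $A^{(n)}\to A$ in probability and hence $A=\tilde A$. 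Your dominated-convergence step computes the limit but does not replace this bridge. A minor aside: your It\^o computation correctly produces a factor $\sigma(X_s)$ in $dA^{k,n}_s$, which you carry in the variation bound but silently drop when passing to the limit; the paper's computation and the stated drift omit this factor, so either way the bookkeeping should be made consistent.
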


\begin{proof}
Since the process $Z_t$ is a c\`adl\`ag process with no fixed times of discontinuity, we can apply Theorem \ref{T:semimgX}. First we prove that $(Z_t)_{0\leq t<\frac{1}{2}}$ and $(\mathcal F_t)_{0\leq t<\frac{1}{2}}$ satisfy Assumption~\ref{A:J}. Let $(\pi_n)_{n\geq 1}=(\{t^n_i\})_{n\geq 1}$ be a refining  sequence of subdivisions of $[0,\frac{1}{2}]$ whose mesh tends to zero. We will do more and compute directly the conditional distributions of $(Z_{t^n_0}, Z_{t^n_1}-Z_{t^n_0}, \ldots, Z_{t^n_i}-Z_{t^n_{i-1}})$ for any $1\leq i\leq n+1$. Pick such $i$ and let $0\leq t<\frac{1}{2}$ and $(z_0,\ldots, z_i)\in~\mathbb R^{i+1}$.
\begin{align*}
P(Z_{t^n_0}&\leq z_0, Z_{t^n_1}-Z_{t^n_0}\leq z_1, \ldots, Z_{t^n_i}-Z_{t^n_{i-1}}\leq z_i\mid\mathcal F_t)\\
&=P(X_1\leq z_0, X_1-X_{1-t^n_1}> -z_1, \ldots, X_{1-t^n_{i-1}}-X_{1-t^n_i}> -z_i\mid\mathcal F_t)\\
&=E\Big(\prod_{k=1}^{i-1}1_{\{X_{1-t^n_k}-X_{1-t^n_{k+1}}\geq -z_{k+1}\}} P\big(X_{1-t^n_1}-z_1\leq X_1\leq z_0\mid \mathcal F_{1-t^n_1}\big)\mid \mathcal F_t\Big)\\
&=E\Big(\prod_{k=1}^{i-1}1_{\{X_{1-t^n_k}-X_{1-t^n_{k+1}}\geq -z_{k+1}\}} \int_{X_{1-t^n_1}-z_1}^{\infty} 1_{\{u_1\leq z_0\}}P_{X_{1-t^n_1}}(t^n_1,u_1)du_1\mid \mathcal F_t\Big)\\
&=E\Big(\prod_{k=1}^{i-1}1_{\{X_{1-t^n_k}-X_{1-t^n_{k+1}}\geq -z_{k+1}\}} \int_{-z_1}^{\infty} 1_{\{v_1\leq z_0-X_{1-t^n_1}\}}P_{X_{1-t^n_1}}(t^n_1,v_1+X_{1-t^n_1})dv_1\mid \mathcal F_t\Big)
\end{align*}
Repeating the same technique and conditioning successively w.r.t $\mathcal F_{1-t^n_2}, \ldots, \mathcal F_{1-t^n_i}$ gives
\begin{align*}
P(Z_{t^n_0}&\leq z_0, Z_{t^n_1}-Z_{t^n_0}\leq z_1, \ldots, Z_{t^n_i}-Z_{t^n_{i-1}}\leq z_i\mid\mathcal F_t)=E\Big(\int_{-z_i}^{\infty}\cdots\int_{-z_1}^{\infty}\\
&1_{\{\sum_{k=1}^iv_k\leq z_0-X_{1-t^n_i}\}}\prod_{k=1}^iP_{X_{1-t^n_i}+\sum_{j=k+1}^iv_j}(t^n_k-t^n_{k-1},\sum_{l=k}^iv_l+X_{1-t^n_i})dv_1\ldots dv_i\mid \mathcal F_t\Big)\\
&=\int_{-\infty}^{\infty}\int_{-z_i}^{\infty}\cdots\int_{-z_1}^{\infty}1_{\{u+\sum_{k=1}^iv_k\leq z_0\}}P_{X_t}(1-t^n_i-t,u)\\
&\qquad \qquad \qquad \qquad\prod_{k=1}^iP_{u+\sum_{j=k+1}^iv_j}(t^n_k-t^n_{k-1},u+\sum_{l=k}^iv_l)dv_1\ldots dv_idu
\end{align*}
Fubini's Theorem implies then
\begin{align*}
P(Z_{t^n_0}&\leq z_0, Z_{t^n_1}-Z_{t^n_0}\leq z_1, \ldots, Z_{t^n_i}-Z_{t^n_{i-1}}\leq z_i\mid\mathcal F_t)=\int_{-z_i}^{\infty}\cdots\int_{-z_1}^{\infty}\int_{-\infty}^{z_0-\sum_{k=1}^iv_k}\\
&P_{X_t}(1-t^n_i-t,u)\prod_{k=1}^iP_{u+\sum_{j=k+1}^iv_j}(t^n_k-t^n_{k-1},u+\sum_{l=k}^iv_l)dudv_1\ldots dv_i
\end{align*}
Since the transition density $\pi(t,x,y)=P_x(t,y)$ is twice continuously differentiable in $x$ by assumption, it is straightforward to check that 
$$
p^{i,n}_t(z_0,\ldots,z_i)=\prod_{k=1}^i\pi(t^n_k-t^n_{k-1}, \sum_{j=0}^kz_j, \sum_{j=0}^{k-1}z_j) \pi(1-t^n_i-t, X_t, \sum_{j=0}^iz_j)
$$
One then readily obtains
$$
d\langle p^{i,n}_{.}(z_0, \ldots, z_i), B_{.}\rangle _s = \frac{1}{\pi}\frac{\partial \pi}{\partial x}(1-t^n_i-s, X_s, \sum_{j=0}^iz_k)p^{i,n}_s(z_0, \ldots, z_i)ds
$$
Hence by taking the local martingale $M$ in (\ref{eq:Ain}) to be $B$, we get
$$
A^{i,n}_t=\int_0^t\frac{1}{\pi}\frac{\partial \pi}{\partial x}(1-t^n_i-s, X_s, X_{1-t^n_i})ds
$$
Now equation (\ref{eq:An}) becomes
\begin{align*}
A^{(n)}_t=\sum_{i=0}^n1_{\{t^n_i\leq t<t^n_{i+1}\}}\Big(&\sum_{k=0}^{i-1}\int_{t^n_k}^{t^n_{k+1}}\frac{1}{\pi}\frac{\partial \pi}{\partial x}(1-t^n_k-s, X_s, X_{1-t^n_k})ds\\
&+\int_{t^n_i}^t\frac{1}{\pi}\frac{\partial \pi}{\partial x}(1-t^n_i-s, X_s, X_{1-t^n_i})ds\Big)
\end{align*}
In order to apply Theorem \ref{T:semimgX}, it only remains to prove that $E(\int_0^{\frac{1}{2}}|dA^{(n)}_s|)\leq K$ for some constant $K$ independent from $n$. The finite constant $K=\int_0^1\phi(s)ds$ works since
\begin{align*}
E\big(\int_0^{\frac{1}{2}}|dA^{(n)}_s|\big)&\leq\sum_{k=0}^n\int_{t^n_k}^{t^n_{k+1}}E\Big|\frac{1}{\pi}\frac{\partial \pi}{\partial x}(1-t^n_k-s, X_s, X_{1-t^n_k})\Big|ds\\
&\leq \sum_{k=0}^n\int_{t^n_k}^{t^n_{k+1}}\phi(1-t^n_k-s)ds = \sum_{k=0}^n\int_{1-t^n_k-t^n_{k+1}}^{1-2t^n_{k}}\phi(s)ds\\
&\leq \sum_{k=0}^n\int_{1-2t^n_{k+1}}^{1-2t^n_{k}}\phi(s)ds = \int_0^1\phi(s)ds
\end{align*}
This proves again that $B$ is a $\mathbb G$~semimartingale. Now $A^{(n)}$ converges in probability to the process $A$ given by
$$
A_t=\int_0^t\frac{1}{\pi}\frac{\partial \pi}{\partial x}(1-2s, X_s, X_{1-s})ds
$$
Since all $\mathbb G$~martingales are continuous, the comment following Theorem \ref{T:semimgX} ensures that B-A is a $\mathbb G$~martingale. Its quadratic variation is $t$, therefore it is a $\mathbb G$~Brownian motion.
\end{proof}

In the Brownian case, the result in Theorem \ref{T:diff} can also be obtained using the usual theory of initial expansion of filtration. Assume $b=0$ and $\sigma=1$, i.e.~$Z=B_{1-\cdot}$ and $X=B$.

\begin{theorem}\label{T:Brownian}
The process $B$ is a $\mathbb G$~semimartingale and
$$
B_t - \int_0^t\frac{B_{1-s}-B_s}{1-2 s}ds, \qquad 0\leq t<\frac{1}{2}
$$
is a $\mathbb G$~Brownian motion.
\end{theorem}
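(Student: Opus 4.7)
The plan is classical initial enlargement. Fix $t_0 \in (0, 1/2)$ and initially enlarge $\mathbb{F}$ by the scalar Gaussian variable $B_{1-t_0}$; call the resulting right-continuous filtration $\mathbb{F}^{t_0}$. Conditionally on $\mathcal{F}_s$ for $s < 1-t_0$, $B_{1-t_0}$ is $\mathcal{N}(B_s, 1-t_0-s)$, so Jacod's criterion applies and the classical formula (see~\cite[Ch. VI]{Protter:2005}) gives the $\mathbb{F}^{t_0}$-canonical decomposition
$$
B_s \;=\; W^{(t_0)}_s \,+\, \int_0^s \frac{B_{1-t_0}-B_u}{1-t_0-u}\,du, \qquad 0 \leq s < 1-t_0,
$$
where $W^{(t_0)}$ is an $\mathbb{F}^{t_0}$-Brownian motion. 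Next enlarge $\mathbb{F}^{t_0}$ by the increments $(B_r - B_{1-t_0})_{1-t_0 \leq r \leq 1}$; by the strong Markov property these are independent of $\mathcal{F}^{t_0}_{1-t_0}$, so the decomposition above persists on $[0, 1-t_0)$ in the enlarged filtration $\mathbb{H}$, the right-continuous enlargement of $\mathbb{F}$ by $\sigma(B_r, 1-t_0 \leq r \leq 1)$.

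For $s \leq t_0$, $\mathcal{G}_s = \mathcal{F}_s \vee \sigma(B_r, r \geq 1-s)$ is contained in $\mathcal{H}_s$ because $\sigma(B_r, r \geq 1-s) \subset \sigma(B_r, r \geq 1-t_0)$. Stricker's theorem then gives that $B$ is a $\mathbb{G}$-semimartingale on $[0, t_0]$. To identify the $\mathbb{G}$-canonical decomposition I would invoke the standard result that for nested filtrations the $\mathbb{G}$-compensator of a $\mathbb{G}$-adapted special $\mathbb{H}$-semimartingale is the $\mathbb{G}$-dual predictable projection of its $\mathbb{H}$-compensator; for the continuous, absolutely continuous drift above this reduces to the $\mathbb{G}$-optional projection of the integrand, so the $\mathbb{G}$-drift at time $s$ equals
$$
\alpha_s \;:=\; \frac{E[B_{1-t_0}\mid \mathcal{G}_s] - B_s}{1-t_0-s}.
$$

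The conditional expectation is computed via the Brownian-bridge interpolation: for $s < t_0$ the time $1-t_0$ lies in the unobserved interval $(s, 1-s)$, and by the Markov property applied at time $1-s$, the law of $(B_r)_{s \leq r \leq 1-s}$ given $\mathcal{G}_s$ is a Brownian bridge joining the known endpoints $B_s$ and $B_{1-s}$. Thus
$$
E[B_{1-t_0}\mid \mathcal{G}_s] \;=\; B_s + \frac{1-t_0-s}{1-2s}\bigl(B_{1-s}-B_s\bigr),
$$
and after the cancellation $\alpha_s = (B_{1-s}-B_s)/(1-2s)$, which is independent of $t_0$. Letting $t_0 \uparrow 1/2$ extends the decomposition to $[0, 1/2)$. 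Finally, $W_t := B_t - \int_0^t (B_{1-s}-B_s)/(1-2s)\,ds$ is a continuous $\mathbb{G}$-local martingale with $\langle W, W\rangle_t = \langle B, B\rangle_t = t$, so L\'evy's characterization identifies $W$ as a $\mathbb{G}$-Brownian motion.

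The principal subtlety I anticipate is the justification of the dual-projection identification: one must check enough integrability so that the $\mathbb{H}$-drift has locally integrable total variation on $[0, t_0]$ and its $\mathbb{G}$-predictable projection really produces the $\mathbb{G}$-compensator. The explicit formula shows $E\int_0^{t_0}|\alpha_s|\,ds < \infty$ for each $t_0 < 1/2$, so this step is routine; the Brownian-bridge computation is the only piece of real probabilistic content.
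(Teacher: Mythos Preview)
Your proof is correct and follows essentially the same strategy as the paper's first proof: initial enlargement with a future value of $B$, further enlargement by independent increments, filtration shrinkage to $\mathbb G$ via optional projection, identification of the drift through the Brownian-bridge conditional expectation, and L\'evy's characterization. The only real difference is cosmetic: the paper enlarges directly with $B_{1/2}$, which yields $\mathcal G_t\subset\mathcal H_t$ for every $t<\tfrac12$ in one step and avoids your limiting argument $t_0\uparrow\tfrac12$; your parametrized version works but is slightly less economical (and your inclusion $\mathcal G_s\subset\mathcal H_s$ should be stated for $s<t_0$ rather than $s\le t_0$).
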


\begin{proof}
Introduce the filtration $\mathbb H^1=(\mathcal H_t)_{0\leq t<\frac{1}{2}}$ obtained by initially expanding $\mathbb F$ with~$B_{\frac{1}{2}}$. 
$$
\mathcal H^1_t = \bigcap_{u>t} F_u \vee \sigma(B_{\frac{1}{2}})
$$
We know that $B$ remains an $\mathbb H^1$~semimartingale and 
$$
M_t:=B_t-\int_0^t\frac{B_{\frac{1}{2}}-B_s}{\frac{1}{2}-s}ds, \qquad 0\leq t<\frac{1}{2}
$$
is an $\mathbb H^1$~Brownian motion. Now expand initially $\mathbb H^1$ with the independent $\sigma$-field $\sigma (B_v - B_{\frac{1}{2}}, \frac{1}{2}<v\leq 1)$ to obtain $\mathbb H$ i.e.
$$
\mathcal H_t=\bigcap_{u>t} H^1_u\vee \sigma(B_v - B_{\frac{1}{2}}, \frac{1}{2}<v\leq 1)
$$
Obviously $(M_t)_{0\leq t<\frac{1}{2}}$ remains an $\mathbb H$~Brownian motion. But $\mathcal G_t \subset \mathcal H_t$, for all $0\leq t<\frac{1}{2}$, hence the optional projection of $M$ onto $\mathbb G$, denoted $^oM$ in the sequel, is again a martingale (see \cite{ProtterFollmer}), i.e.
$$
^oM_t=B_t-E(\int_0^t\frac{B_{\frac{1}{2}}-B_s}{\frac{1}{2}-s}ds\mid\mathcal G_t), \qquad 0\leq t<\frac{1}{2}
$$
is a $\mathbb G$~martingale. Also, $N_t:=E(\int_0^t\frac{B_{\frac{1}{2}}-B_s}{\frac{1}{2}-s}ds\mid\mathcal G_t)-\int_0^tE(\frac{B_{\frac{1}{2}}-B_s}{\frac{1}{2}-s}\mid\mathcal G_s)ds$ is a $\mathbb G$~local martingale, see for example \cite{KLP} for a proof. So
$$
B_t\ =\ ^oM_t+N_t+\int_0^tE(\frac{B_{\frac{1}{2}}-B_s}{\frac{1}{2}-s}\mid\mathcal G_s)ds
$$
We prove now the theorem using properties of the Brownian bridge. Recall that for any $0\leq T_0 <T_1<\infty$,
\begin{equation}\label{eq:condLaw}
\mathcal L\Big( (B_t)_{T_0\leq t\leq T_1} \mid B_s, s\notin ]T_0, T_1[\Big)=\mathcal L\Big( (B_t)_{T_0\leq t\leq T_1} \mid B_{T_0}, B_{T_1} \Big)
\end{equation}
and
$$
\mathcal L\Big( (B_t)_{T_0\leq t\leq T_1} \mid B_{T_0}=x, B_{T_1}=y \Big)=\mathcal L\Big(x+\frac{t-T_0}{T_1-T_0}(y-x)+(Y^{W,T_1-T_0}_{t-T_0})_{T_0\leq t\leq T_1}\Big)
$$
where $W$ is a generic standard Brownian motion and $Y^{W, T_1-T_0}$ is the standard Brownian bridge on $[0, T_1-T_0]$. It follows that for all $ T_0\leq t\leq T_1$ and all $x$ and $y$,
\begin{equation}\label{eq:bridge}
E(B_t \mid B_{T_0} = x, B_{T_1} = y)=\frac{T_1-t}{T_1-T_0}x+\frac{t-T_0}{T_1-T_0}y
\end{equation}
For any $0\leq s< t <\frac{1}{2}$, if follows from (\ref{eq:condLaw}) and (\ref{eq:bridge}) that
$$
E(B_{\frac{1}{2}}-B_s\mid\mathcal G_s)=\frac{1}{2}(B_{1-s}-B_s)
$$
Therefore
$$
B_t-\int_0^t\frac{B_{1-s}-B_s}{1-2s}ds\ =\ ^oM_t+N_t
$$
is a $\mathbb G$~local martingale. Since the quadratic variation of the $\mathbb G$~local martingale $B-A$ is~$t$, Levy's characterization of Brownian motion ends the proof.
\end{proof}

In the immediately previous proof, the properties of the Brownian bridge allow us to compute explicitly the decomposition of $B$ in $\mathbb G$. Our method obtains both the semimartingale property and the decomposition simultaneously and generalizes to diffusions, for which the computations as in the proof of Theorem \ref{T:Brownian} are hard. We provide a shorter proof for Theorem \ref{T:Brownian} based on Theorem \ref{T:diff}.  This illustrated that, given Theorem~\ref{T:diff}, even in the Brownian case our method is shorter, simpler, and more intuitive.

\begin{proof}\textbf{[Second proof of Theorem \ref{T:Brownian}]} In the Brownian case, $\pi(t,x,y)=\frac{1}{\sqrt{2\pi t}}e^{-\frac{(y-x)^2}{2t}}$. Therefore $\frac{1}{\pi}\frac{\partial \pi}{\partial x}(t,x,y)=\frac{y-x}{t}$. Hence
$$
E\Big(\big|\frac{1}{\pi}\frac{\partial \pi}{\partial x}\big|(t-s, B_s, B_t)\Big)\leq \frac{1}{t-s}E(|B_t-B_s|)=\sqrt{\frac{2}{\pi}}\frac{1}{\sqrt{t-s}}
$$
and $\phi(x)=\sqrt{\frac{2}{\pi}}\frac{1}{\sqrt{x}}$ is integrable in zero. From the closed formula for the transition density, $A_t=\int_0^t\frac{B_{1-s}-B_s}{1-2 s}ds$. Therefore $B$ is a $\mathbb G$~semimartingale, and $B-A$ is a $\mathbb G$~Brownian motion by Theorem~\ref{T:diff}.
\end{proof}

This property satisfied by Brownian motion is inherited by diffusions whose parameters $b$ and $\sigma$ satisfy some boundedness assumptions. We add the extra assumptions that $b$ and $\sigma$ are bounded and $k\leq\sigma(x)$ for some $k>0$. The following holds.

\begin{corollary}
The process $(B_t)_{0\leq t<\frac{1}{2}}$ is a $\mathbb G$ semimartingale and 
$$
B_t - \int_0^t\frac{1}{\pi}\frac{\partial \pi}{\partial x}(1-2s, X_s, X_{1-s})ds
$$
is a $\mathbb G$~Brownian motion.
\end{corollary}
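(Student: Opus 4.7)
My plan is to reduce the corollary directly to Theorem~\ref{T:diff}. All the hypotheses on the SDE needed in Theorem~\ref{T:diff} (smoothness of the coefficients, existence of a smooth transition density $\pi(t,x,y)$) are already in force, so the only thing to verify is the existence of an integrable majorant
$$
E\Bigl(\Bigl|\tfrac{1}{\pi}\tfrac{\partial \pi}{\partial x}(t-s,X_s,X_t)\Bigr|\Bigr)\leq \phi(t-s),\qquad \int_0^1\phi(s)\,ds<\infty.
$$
Once such a $\phi$ is produced, Theorem~\ref{T:diff} immediately gives that $B$ is a $\mathbb G$-semimartingale with the announced decomposition.

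To build $\phi$, I would use classical Aronson-type Gaussian bounds on $\pi$ together with its spatial gradient estimate, which hold precisely under the additional assumptions of this corollary (boundedness of $b,\sigma$ and uniform ellipticity $\sigma(x)\geq k>0$); these are standard parabolic PDE results and are also exactly the setting of Bally--Talay~\cite{BallyTalay}, cited earlier in the section. Concretely one has, for $0<u\leq 1$,
$$
\pi(u,x,y)\geq \frac{c_1}{\sqrt u}\,e^{-C_1(y-x)^2/u},\qquad \Bigl|\tfrac{\partial \pi}{\partial x}(u,x,y)\Bigr|\leq \frac{C_2}{u}\,e^{-c_2(y-x)^2/u},
$$
with $C_1\geq c_2$ after adjusting constants. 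Dividing gives a pointwise bound of the form
$$
\Bigl|\tfrac{1}{\pi}\tfrac{\partial \pi}{\partial x}(u,x,y)\Bigr|\leq \frac{C_3}{\sqrt u}\Bigl(1+\tfrac{|y-x|}{\sqrt u}\Bigr)\,e^{C_4(y-x)^2/u}.
$$
(The log-derivative behaves just as in the Gaussian case $(y-x)/u$, up to moderate exponential corrections.)

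Then I would take $(x,y)=(X_s,X_t)$ and take expectations. Under bounded coefficients one has the standard Gaussian-type moment estimates $E\bigl(e^{\lambda (X_t-X_s)^2/(t-s)}\bigr)\leq C_\lambda$ for small $\lambda>0$ and $E|X_t-X_s|\leq C\sqrt{t-s}$, so combining with the pointwise bound and Cauchy--Schwarz yields
$$
E\Bigl(\Bigl|\tfrac{1}{\pi}\tfrac{\partial \pi}{\partial x}(t-s,X_s,X_t)\Bigr|\Bigr)\leq \frac{C}{\sqrt{t-s}},
$$
so $\phi(u)=C/\sqrt u$ does the job since $\int_0^1 u^{-1/2}du<\infty$. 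Theorem~\ref{T:diff} then gives both the semimartingale property and the explicit drift $\int_0^t \tfrac{1}{\pi}\tfrac{\partial\pi}{\partial x}(1-2s,X_s,X_{1-s})\,ds$, and the resulting continuous martingale with quadratic variation $t$ is a Brownian motion by L\'evy.

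The main obstacle is the pointwise logarithmic-derivative estimate: the Gaussian \emph{lower} bound on $\pi$ is what controls the denominator, and here the uniform ellipticity assumption $\sigma\geq k>0$ is essential (without it $\pi$ could degenerate and $1/\pi$ would blow up). Verifying that the constants in the numerator and denominator line up so that the Gaussian exponentials integrate against the law of $X_t-X_s$ is the only delicate point, and it is precisely where the boundedness of $b,\sigma$ enters, by producing sub-Gaussian tails for $X_t-X_s$ uniformly in $s$.
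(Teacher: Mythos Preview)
Your overall plan---reduce the corollary to Theorem~\ref{T:diff} by exhibiting an integrable $\phi$ with $E\bigl|\tfrac{1}{\pi}\partial_x\pi(t-s,X_s,X_t)\bigr|\leq\phi(t-s)$---is exactly what the paper does, and the conclusion $\phi(u)\asymp u^{-1/2}$ is also what the paper obtains. The difference lies in how the estimate is produced.

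The paper does \emph{not} use Aronson bounds. Instead it applies the Lamperti transform $Y_t=s(X_t)$ with $s(x)=\int_0^x\sigma(y)^{-1}dy$, so that $Y$ solves $dY_t=\mu(Y_t)\,dt+dB_t$ with unit diffusion coefficient. For such an SDE there is a semi-explicit transition density formula (Florens--Zmirou~\cite{Zmirou}), and a direct differentiation yields the \emph{pointwise} bound $\bigl|\tfrac{1}{\pi}\partial_x\pi(u,x,y)\bigr|\leq M\bigl(1+|s(y)-s(x)|/u\bigr)$, with no exponential correction. Since $s(X_t)-s(X_s)=Y_t-Y_s=\int_s^t\mu(Y_r)\,dr+(B_t-B_s)$ and $\mu$ is bounded, one gets $E|s(X_t)-s(X_s)|\leq C\sqrt{t-s}$ and hence $\phi(u)=C(1+u^{-1/2})$.

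Your route via Aronson bounds has a genuine gap at precisely the point you flagged. Dividing the gradient upper bound by the density lower bound produces a factor $e^{C_4(y-x)^2/u}$ with $C_4=C_1-c_2>0$, and this constant is \emph{not} under your control; there is no reason it should be small enough for $E\bigl[e^{C_4(X_t-X_s)^2/(t-s)}\bigr]$ to be finite. The Cauchy--Schwarz step you sketch therefore does not close. A simple repair that keeps your approach intact: instead of bounding the ratio pointwise, use the Markov property to write
\[
E\Bigl|\tfrac{1}{\pi}\partial_x\pi(t-s,X_s,X_t)\Bigr|
=E\!\int\!\bigl|\partial_x\pi(t-s,X_s,y)\bigr|\,dy
\leq \frac{C_2}{t-s}\!\int e^{-c_2 y^2/(t-s)}dy
=\frac{C'}{\sqrt{t-s}},
\]
since the $1/\pi$ cancels exactly against the conditional density of $X_t$ given $X_s$. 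With this observation your Aronson-based argument becomes complete and in fact shorter than the paper's, at the cost of invoking the (less elementary) two-sided heat-kernel and gradient estimates rather than the explicit Lamperti formula.
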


\begin{proof}
Introduce the following quantities
$$
s(x)=\int_0^x\frac{1}{\sigma(y)}dy \qquad g=s^{-1} \qquad \mu=\frac{b}{\sigma}\circ g - \frac{1}{2}\sigma^{'}\circ g
$$
The process $Y_t=s(X_t)$ satisfies the SDE $dY_t=\mu(Y_t)dt+dB_t$. The transition density is known in semi-closed form (see \cite{Zmirou}) and given by
$$
\pi(t,x,y)=\frac{1}{\sqrt{2\pi t}}\frac{1}{\sigma(y)}e^{-\frac{(s(y)-s(x))^2}{2t}}U_t(s(x),s(y))
$$
where $U_t(x,y)=H_t(x,y)e^{A(y)-A(x)}$, $H_t(x,y)=E(e^{-t\int_0^1h(x+z(y-x)+\sqrt{t}W_z)dz})$, $W$ is a Brownian bridge, $A$ a primitive of $\mu$ and $h=\frac{1}{2}(\mu^2+(\mu^{'})^2)$. It is then straightforward to compute the ratio
\begin{align*}
\frac{1}{\pi}\frac{\partial \pi}{\partial x}(t,x,y)&=\frac{1}{\sigma(x)}\Big(\frac{s(y)-s(x)}{t}+\frac{1}{U_t(s(x),s(y))}\frac{\partial U_t}{\partial x}(s(x),s(y))\Big)\\
&=\frac{1}{\sigma(x)}\Big(\frac{s(y)-s(x)}{t}+\frac{1}{H_t(s(x),s(y))}\frac{\partial H_t}{\partial x}(s(x),s(y))-\mu(s(x))\Big)
\end{align*}
From the boundedness assumptions of $b$ and $\sigma$ and their derivatives, there exists a constant $M$ such that $|\frac{1}{\pi}\frac{\partial \pi}{\partial x}(t,x,y)|\leq M(1+\frac{|s(y)-s(x)|}{t})$. Hence, for $0\leq s<t$
$$
E\Big|\frac{1}{\pi}\frac{\partial \pi}{\partial x}(t-s,X_s,X_t)\Big|\leq M\big(1+E\Big|\frac{s(X_t)-s(X_s)}{t-s}\Big|\big)
$$
But $s(X_t)-s(X_s)=Y_t-Y_s=\int_s^t\mu(Y_u)du+W_t-W_s$. But $\mu$ is bounded, hence
$$
E|s(Y_t)-s(Y_s)|\leq ||\mu||_{\infty}|t-s|+E|W_t-W_s|=||\mu||_{\infty}|t-s|+\sqrt{\frac{2}{\pi}}\sqrt{t-s}
$$
This proves the existence of a constant $C$ such that
$$
E\Big|\frac{1}{\pi}\frac{\partial \pi}{\partial x}(t-s,X_s,X_t)\Big|\leq C (1+\frac{1}{\sqrt{t-s}})
$$
Since $\phi(x)=C (1+\frac{1}{\sqrt{x}})$ is integrable in zero, we can apply Theorem \ref{T:diff} and conclude.
\end{proof}

\section{Applications to insider trading models}\label{section5}

We have recently seen evidence of a rather spectacular use of inside information during the trial and conviction of Raj Rajaratnam of the Galleon Group, and the subsequent conviction of his co-conspirator Rajat Gupta, abusing his board membership at Goldman Sachs. Other much reported scandals include those of Martha Stewart (of Martha Stewart Living Omnimedia), and Mark Cuban, the billionaire owner of the Dallas Mavericks (a professional basketball team). There are many stories in the media; one example is given in~\cite{Henning}. The Barclay's (and other mega banks') manipulation of LIBOR is unfolding as this paper is being written. Therefore it is reasonable to try to understand this recurrent and insidious phenomenon via mathematical modeling.

Insider trading models using stochastic calculus go back to the seminal work of A. Kyle in 1985~\cite{Kyle}, with a rigorous treatment developed soon after by K. Back~\cite{Back1},\cite{Back2}. The ideas are straightforward: If we have a filtration $(\mcf_t)_{t\geq 0}=\mbf$ that represents the collective information available to the market, there might arise entities that have extra, ``inside" information that is not publicly available. Since the insiders see more observable events than does the market in general, we can model this by using a larger filtration $(\mcg_t)_{t\geq 0}=\mbg$ that contains $\mbf$. We obtain $\mbg$ by carefully expanding $\mbf$ in such a way that all $(\Omega,\mbf,P)$ semimartingales remain semimartingales in the filtered measure space $(\Omega,\mbg,P)$; often we can also obtain the new semimartingale decomposition in the expanded filtration as well. 

To illustrate the basic ideas, let's assume we are dealing with continuous semimartingales in a Brownian paradigm, with complete markets. We also assume that the spot interest rate is $0$. On our underlying space $(\Omega,\mcf,\mbf,P)$ we have a nonnegative (continuous) price process $S$, which has a decomposition $S=S_0+M+A$ where $M$ is a continuous local martingale, and $A$ is a continuous process with paths of finite variation on compact time sets.  Moreover we assume $M_0=A_0=0$ so such a decomposition is unique. Our time interval of interest will be $[0,T]$, so we are working in a finite horizon model. If $S$ satisfies the NFLVR condition, as we will assume it does to avoid uninteresting cases, then we can find a probability measure $Q$ equivalent to $P$ (written $Q\sim P$) such that $S$ itself is a local martingale under $Q$. The measure $Q$ is called the \emph{risk neutral measure} and in this complete market it determines the fair (no arbitrage) prices of financial derivatives, such as call and put options, via expectation under $Q$. This is all well known, see for example either of~\cite{DSbook},\cite{JP1}. 

\subsection{Constructing the Risk Neutral Measure of the Insider}\label{riskneutral}
Recall we have under $P$ that $S$ decomposes as
\begin{equation}\label{5e1}
S_t=S_0+M_t+A_t \quad t\geq 0.
\end{equation}
Since the market is assumed to be complete, the risk neutral measure $Q$ is unique. Because $M$ is within the Brownian paradigm, we know by martingale representation that $M_t=\int_0^tJ_sdB_s$ for some predictable integrand $J$. Also, $S$ satisfies NFLVR, so in this case we must have that $A$ is of the form $A_t=\int_0^tH_sds$ for some predictable process $H$. We now set 
\begin{equation}\label{5e2}
\frac{dQ}{dP}=\exp\left(\int_0^T-\frac{H_s}{J_s}dB_s-\frac{1}{2}\int_0^T\frac{H^2_s}{J^2_s}ds\right).
\end{equation}
Inspired by standard theory, we let $Z$ be the unique solution of the stochastic exponential equation
\begin{equation}\label{5e3}
Z_t=1-\int_0^tZ_s\frac{H_s}{J_s}dB_s
\end{equation}
and then by Girsanov's theorem (see, e.g.,~\cite{Protter:2005}) we have that
\begin{equation}\label{5e4}
N_t=\int_0^tJ_sdB_s-\int_0^t\frac{1}{Z_s}d[Z,J\cdot B]_s\text{ is a }Q\text{ local martingale}.
\end{equation}
We make the calculation
\begin{eqnarray*}
[Z,J\cdot B]_t&=&[-Z\frac{H}{J}\cdot B,J\cdot B]_t\\
&=&-\int_0^tZ_s\frac{H_s}{J_s}J_sd[B,B]_s\\
&=&-\int_0^tZ_sH_sds
\end{eqnarray*}
Combining this calculation with~\eqref{5e4} gives that
\begin{eqnarray*}
N_t&=&\int_0^tJ_sdB_s-\int_0^t-\frac{1}{Z_s}Z_sH_sds\\
&=&\int_0^tJ_sdB_s+\int_0^tH_sds=\text{ a }Q\text{ local martingale}.
\end{eqnarray*}
Note that we now have $N=S$ which implies that $S$ is a $Q$ local martingale. Since there is only one such measure that turns $S$ into a local martingale  (the market is complete), we see that our seemingly \emph{ad hoc} definition of $\frac{dQ}{dP}$ that gives us $Q$ is exactly the right one, and the only one, that works to give us the risk neutral measure. As a caveat, we quickly add that in the above analysis, we have implicitly assumed that all stochastic integrals exist, and in particular that dividing by the process $J$ does not cause any problems. 

The point of the above calculations is that the Radon-Nikodym density $\frac{dQ}{dP}$ given in~\eqref{5e2} depends on the processes $H$ and $J$. These processes can and usually do change under an expansion of filtrations, and therefore they affect $\frac{dQ}{dP}$, changing the risk neutral measure. We denote $J^\star$ and $H^\star$ to be the $\mbg$ processes in the decomposition of $S$. The risk neutral measure changes  from $Q$ to a new measure $Q^\star$ for the insider, and it is different than it is for the market. Since derivative prices are expectations under $Q$ for the collective market using $\mbf$, and they are expectations under $Q^\star$ for the insider using $\mbg$, the result is that the insider has different derivative prices using the filtration $\mbg$, giving him a potentially tremendous advantage with which he can derive more profits, by knowing when a trade that appears neutral and fair to the market under $Q$ is actually a bargain (or is overpriced) if the price is computed for the insider under $Q^\star$. 


\begin{remark}\label{r1}
It is also possible that under $\mbg$ the process $H^\star$ does not have almost surely square integrable paths with respect to $dt$, and therefore the second integral on the right side of~\eqref{5e2} need not exist a.s.. In this case there is no risk neutral measure $Q^\star$, or at least no such measure that is equivalent to $P$, and NFLVR is violated under $\mbg$. That is, insider models can introduce arbitrage opportunities, even when there is no arbitrage in the original $(\Omega,\mbf,P)$ model. This idea has been developed for example in the articles~\cite{IPW},\cite{Imkeller}. In particular Peter Imkeller, in~\cite[Theorem 4.1]{Imkeller} shows that insider knowledge of the last time $L$  a price process following a recurrent diffusion crosses 0 before a fixed time $T$ is an arbitrage opportunity, and he makes explicit calculations to show that one does indeed lose the square integrability in the drift term under the filtration expansion, so there cannot be a risk neutral measure that is equivalent, and therefore NFLVR is violated. This is a reassuring result, if not a surprising one, because an obvious arbitrage strategy for the insider is to buy the stock immediately after time $L$ if it is above zero, and to sell it short immediately after $L$ if it is below zero. He assumes (for the sake of the calculations) that the recurrent diffusion satisfies a standard stochastic differential equation, with some restrictions on the generality of the coefficients. The choice of the level $0$ is of course arbitrary, and we could be dealing with a more general asset than a stock price, so there is no need to insist that it remain nonnegative. A similar event, without obvious arbitrage opportunities, is the second to last time a price process crosses a level, or better, the second to last time it reaches the boundary of a band with upper and lower crossing bounds; however these examples are less amenable to explicit calculations in the spirit of Imkeller.
\end{remark}

\subsection{Progressive Expansion}
What is interesting about our method as regards models of insider trading is that it allows the expansion of filtration via an ongoing progressive expansion, as more and more information comes available. While we were writing an earlier version of this paper, the world of banking provided yet another excellent example, mainly the LIBOR scandal.\footnote{LIBOR is an acronym for the London Interbank Offer Rate. It is the primary benchmark, along with the Euribor, for short term interest rates around the world. LIBOR is determined through an average of the interest rates banks must pay to one another for overnight and other loans, with maturities up to one year. It is computed after truncating the extremes. Overnight loans are made from banks with excess capital reserves to banks with insufficient capital reserves, so that banks can meet regulatory requirements on a daily basis.  The rates charged can reflect the health of the bank, as determined by the market of other banks that might lend them money.} This works well for our concept of process expansion, because as the fudging of interest rates reported to LIBOR by the banks began and became ongoing, the knowledge of what was going on gradually diffused into the market as time evolved, and what was going on was changing with time. LIBOR is important because other financial products (such as some financial derivatives, adjustable rate mortgages, many kinds of loans) are based on LIBOR, either directly or indirectly. 

Our first interesting example is related to the famous  Bessel 3 process.
\subsubsection{The Case of the Bessel 3 Process}\label{Bessel}

Let $Z$ be a Bessel 3 process, $\mathbb F$ its natural filtration, $X_t=\inf_{s>t}Z_s$ and $\mathbb G$ the progressive expansion of $\mathbb F$ with $X$. This example has been studied in detail by both Jeulin and Pitman using different techniques. 
Let $B_t=Z_t-\int_0^t\frac{ds}{Z_s}$. It is a classical result that $B$ is an $\mathbb F$~Brownian motion. Using Williams' path decomposition for Brownian motion, Pitman~\cite{Pitman} proves that $B_t-(2X_t-\int_0^t\frac{ds}{Z_s})$ is a $\mathbb G$~Brownian motion. Using filtration expansion results, Jeulin proves in \cite{Jeulin} the $\mathbb G$~semimartingale property of $B$ and provides its decomposition simultaneously. However, his technique is hard to generalise. Our approach allows us to prove the semimartingale property of $B$, albeit without finding the explicit decomposition. Nevertheless it has merit because it can be used in much more general settings as described in Theorem~\ref{T:5}.

\begin{theorem}\label{T:6}
The process $B$ is a $\mathbb G$~special semimartingale.
\end{theorem}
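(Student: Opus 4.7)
The plan is to apply Theorem~\ref{T:5} with $M=B$, the $\mathbb F$-Brownian motion driving $dZ_t = dB_t + dt/Z_t$, and with the dynamic expansion process $X_t = \inf_{s>t} Z_s$. The structural hypotheses of Section~\ref{3bis} are immediate from the transience and continuity of the Bessel 3 process: $X$ is continuous (as the future infimum of a continuous adapted process), non-decreasing, and $X_t\to\infty$ a.s.~as $t\to\infty$.

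Fix $\gep_n\downarrow 0$. The level-crossing times $\tau^n_p=\inf\{t: X_t\geq p\gep_n\}$ coincide with the last-exit times $L^n_p=\sup\{s: Z_s\leq p\gep_n\}$ of $Z$ from $[0,p\gep_n]$, the identification being immediate from the continuity of $Z$. Last-exit times of a transient continuous adapted process are prototypical $\mathbb F$-honest times (ends of $\mathbb F$-optional closed sets), and they are finite a.s.~by transience. Hence Assumption~\ref{A:6} holds with $\tau^n_0=0$ and $\sup_p\tau^n_p=\infty$, and $E(\sup_{s\leq T}|B_s|)<\infty$ is standard.

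It remains to verify the uniform variation bound $E(\int_0^T|dA^n_s|)\leq K$ for $A^n$ given by formula~(\ref{eq:3}). The classical hitting probability $P_x(\inf_u Z_u\leq a)=a/x$ (for $x>a$) yields the Az\'ema supermartingale $Z^{n,p}_t=(p\gep_n/Z_t)\wedge 1$. An It\^o-Tanaka computation together with the identity $d(1/Z_t)=-dB_t/Z_t^2$ identifies the martingale part as $dM^{n,p}_s=-p\gep_n Z_s^{-2}1_{\{Z_s>p\gep_n\}}dB_s$. Substituting into~(\ref{eq:3}), the integrand on $(\tau^n_p,\tau^n_{p+1}]$ equals $-1/Z_s$ on the bulk region $\{Z_s>(p+1)\gep_n\}$ and $p\gep_n/(Z_s(Z_s-p\gep_n))$ on the slab $\{p\gep_n<Z_s\leq(p+1)\gep_n\}$.

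Summed over $p$, the bulk contribution is bounded in variation by $\int_0^T ds/Z_s$, whose expectation is finite for Bessel 3 starting from zero. The slab contributions form the delicate part, since the denominator $Z_s-p\gep_n$ vanishes near $\tau^n_p$; one must exploit that the typical time a Bessel 3 path spends above a newly-crossed level scales in a way that compensates, and that a telescoping cancellation across adjacent levels (the discrete analogue of Pitman's identity $dA_t = 2\,dX_t - dt/Z_t$ for the limit drift) yields an $L^1$ bound proportional to $E(X_T)<\infty$, uniformly in $n$. This uniform estimate is the chief obstacle of the proof; once in hand, Theorem~\ref{T:5} delivers the $\mathbb G$-semimartingale property of $B$, and continuity of $B$ upgrades this immediately to the $\mathbb G$-special semimartingale conclusion.
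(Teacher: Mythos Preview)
Your overall architecture matches the paper exactly: verify Assumption~\ref{A:6} via the identification $\tau^n_p=\sup\{t:Z_t=p\gep_n\}$, compute $Z^{n,p}_t=(p\gep_n/Z_t)\wedge 1$ and its martingale part, split the integrand of $A^n$ into a bulk piece $-1/Z_s$ and a slab piece supported on $\{p\gep_n<Z_s\leq (p+1)\gep_n\}$, and then bound $E(\int_0^T|dA^n_s|)$ uniformly in $n$ so that Theorem~\ref{T:5} applies.

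The genuine gap is the slab estimate, which you correctly flag as the chief obstacle but do not actually carry out. Your proposed mechanism---a telescoping cancellation modeled on Pitman's limit identity $dA_t=2\,dX_t-dt/Z_t$, yielding a bound by $E(X_T)$---is not the argument, and it is not clear how to make it work: the slab integrand $p\gep_n/(Z_s(Z_s-p\gep_n))$ does not telescope in $p$, and Pitman's decomposition plays no role in the paper's proof (indeed the paper emphasises that its method does \emph{not} recover the explicit decomposition). What the paper actually does is short and exact: take expectations term by term and replace the non-adapted indicator $1_{\{\tau^n_p<s\}}$ by its $\mathbb F$-optional projection $(1-p\gep_n/Z_s)^{+}=(Z_s-p\gep_n)/Z_s$ on $\{Z_s>p\gep_n\}$. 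This cancels the singular denominator $Z_s-p\gep_n$ outright, leaving
\[
\sum_{p\geq 0}E\!\int_0^T 1_{\{p\gep_n<Z_s\leq (p+1)\gep_n\}}\,\frac{1}{Z_s}\,ds \;=\; E\!\int_0^T\frac{ds}{Z_s},
\]
since the indicators partition $\{Z_s>0\}$. Hence $E(\int_0^T|dA^n_s|)\leq 2\,E(\int_0^T ds/Z_s)$, a bound manifestly independent of $n$. Once you insert this optional-projection step your proof is complete; without it the slab bound is asserted rather than proved.
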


\begin{proof}
First $X$ is continuous increasing to infinity since $\lim_{t\rightarrow\infty}Z_t=\infty$ (see Lemma 6.20 in Jeulin). Let $\gep_n$ be a sequence of positive real numbers decreasing to zero. Therefore $\tau^n_p=\inf\{t, X_t\geq p\gep_n\}$ is increasing to infinity, for each $n$. Now, with $Y_t=2X_t-Z_t$, we have $X_t=\sup_{s\leq t}Y_s$, so that 
$$
\tau^n_p=\inf\{t, X_t\geq p\gep_n\}=\inf\{t, Y_t\geq p\gep_n\}=\sup\{t,Z_t=p\gep_n\}
$$
Therefore $(\tau^n_p)_{p\geq 1}$ satisfies Assumption \ref{A:6}. We compute now $A^n$ as of equation~(\ref{eq:3}). It is a classical result that
$$
Z^{n,p}_t=P(\tau^n_p>t\mid\mathcal F_t)=1\wedge\frac{p\gep_n}{Z_t}
$$
and Tanaka's formula implies that 
$$
M^{n,p}_t=1-p\gep_n\int_0^t1_{\{Z_s>p\gep_n\}}\frac{dB_s}{Z_s^2}
$$
Therefore since on $\{\tau^n_p<s\}$ we have that $Z_s>p\gep_n$, we obtain:
\begin{align*}
A^n_t&=\sum_{p=0}^{\infty}\int_0^t1_{\{\tau^n_p<s\leq\tau^n_{p+1}\}}\frac{d\langle B,M^{n,p+1}-M^{n,p}\rangle_s}{Z^{n,p+1}_s-Z^{n,p}_s}\\
&=\sum_{p=0}^{\infty}\int_0^t1_{\{\tau^n_p<s\leq\tau^n_{p+1}\}}\frac{\frac{p\gep_n}{Z_s^2}-1_{\{Z_s>(p+1)\gep_n\}}\frac{(p+1)\gep_n}{Z_s^2}}{\frac{(p+1)\gep_n}{Z_s}1_{\{(p+1)\gep_n<Z_s\}}+1_{\{(p+1)\gep_n\geq Z_s\}}-\frac{p\gep_n}{Z_s}}ds\\
&=\sum_{p=0}^{\infty}\int_0^t1_{\{\tau^n_p<s\leq\tau^n_{p+1}\}}\Big(1_{\{(p+1)\gep_n\geq Z_s\}}\frac{p\gep_n}{-Z_sp\gep_n+Z_s^2}+1_{\{(p+1)\gep_n<Z_s\}}\frac{-1}{Z_s}\Big)ds\\
&=\sum_{p=0}^{\infty}\int_0^t1_{\{\tau^n_p<s\leq\tau^n_{p+1}\}}\Big(-\frac{1}{Z_s}+1_{\{(p+1)\gep_n\geq Z_s\}}\frac{1}{Z_s-p\gep_n}\Big)ds
\end{align*}
Fubini's theorem implies finally that
$$
A^n_t=\int_0^t\frac{ds}{Z_s}-\sum_{p=0}^{\infty}\int_0^t1_{\{\tau^n_p<s\}}1_{\{(p+1)\gep_n\geq Z_s\}}\frac{1}{Z_s-p\gep_n}ds
$$
where we also used $1_{\{s\leq \tau^n_{p+1}\}}1_{\{Z_s\leq (p+1)\gep_n\}}=1_{\{Z_s\leq (p+1)\gep_n\}}$. Now
\begin{align*}
E&(\sum_{p=0}^{\infty}\int_0^t1_{\{\tau^n_p<s\}}1_{\{(p+1)\gep_n\geq Z_s\}}\frac{1}{Z_s-p\gep_n}ds)=\sum_{p=0}^{\infty}E(\int_0^t1_{\{\tau^n_p<s\}}1_{\{(p+1)\gep_n\geq Z_s\}}\frac{1}{Z_s-p\gep_n})ds\\
&=\sum_{p=0}^{\infty}E(\int_0^t\frac{1}{Z_s}1_{\{Z_s>p\gep_n\}}1_{\{Z_s\leq (p+1)\gep_n\}})ds=E(\int_0^t\frac{1}{Z_s}\sum_{p=0}^{\infty}1_{\{p\gep_n<Z_s\leq (p+1)\gep_n\}}ds)=E(\int_0^t\frac{ds}{Z_s})
\end{align*}
where the second equality follows because the $\mathbb F$~optional projection of $1_{\{\tau^n_p\leq \cdot\}}$ is $(1-\frac{p\gep_n}{Z_t})^{+}$. It remains to use Theorem \ref{T:5} to conclude. 
\end{proof}

We find this example quite interesting, because we see in the proof of Theorem~\ref{T:6} that each process $(A^n_s)_{s\geq 0}$ a.s. has paths that are absolutely continuous with respect to $ds$, yet as we see in the limit, thanks to the results of Jeulin and Pitman, that the $\mbg$ decomposition 
$B$ is given by 
\begin{equation}\label{Be1}
B_t=\left(B_t-(2X_t-\int_0^t\frac{ds}{Z_s})\right)+(2X_t-\int_0^t\frac{ds}{Z_s}).
\end{equation}
The finite variation term of~\eqref{Be1} is $2X_t-\int_0^t\frac{ds}{Z_s}$.  We note that the process $X$ is non decreasing but $dX_s$ has support on a random set which has Lebesgue measure $0$ a.s. Due to the presence of a singular term in the decomposition~\eqref{Be1} we cannot find an equivalent probability measure that turns $B$ into a $\mbg$ (local) martingale. So we are in the situation where each approximating term is well behaved, but in the limit the process we are after cannot be transformed into a local martingale and this example shows that we have NFLVR, implying the presence of arbitrage opportunities. Intuitively, this filtration expansion introduces arbitrage opportunities into the market where an insider discovers the extra information $X_t$ progressively and obtains an arbitrage opportunity that is hidden from the rest of the market. Brownian motion is not a good model of a stock price (for example it does not remain positive) but an extension of the above could apply, for example, to a model of what transpired with the Galleon Group, previously mentioned in the introduction to this section (Section~\ref{section5}), and in Section~\ref{questions}. See~\cite{MansuyYor} for references concerning this example, and also~\cite{Yor} for a fine analysis of many aspects of the Bessel (3) process and related processes.


\subsubsection{The Case of Transient Diffusions}\label{Subsection6.1.2}

In the preceding section (Section~\ref{Bessel}) we showed one individual process remained a semimartingale in the expanded filtration. It is more interesting, and a more powerful result, to show all $\mbf$ semimartingales remain semimartingales in $\mbg$, albeit with different decompositions in general. This is known as Hypothesis $(H^\prime)$. This might be too much to ask with these rather general filtration expansions, but we can provide a sufficient condition under which a class of semimartingales in $\mbf$ will remain semimartingales in $\mbg$. In so doing we extend the results of Section~\ref{Bessel}.
 
Let $R_t$ be a transient diffusion with values in $\mathbb R^{+}$, which has $\{0\}$ as entrance boundary. Let $s$ be a scale function for $R$, which we can choose such that 
$$
\lim_{x\rightarrow 0}s(x)=-\infty \qquad \text{and} \qquad \lim_{x\rightarrow \infty}s(x)=0
$$
Let $\mathbb F$ be the natural filtration of $R$. Nikeghbali~\cite{N} studied progressive filtration expansions of $\mathbb F$ with last exit times of such diffusions. Define $X$ to be the remaining infimum of $R$, i.e.~$X_t=\inf_{s>t}R_s$ and $\mathbb G$ the progressive expansion of $\mathbb F$ with $X$. We provide a sufficient condition for some $\mathbb F$ martingales to remain $\mathbb G$~semimartingales. The process $M_t=-s(R_t)$, which is well known to be be an $\mathbb F$ local martingale, plays a key role. The next theorem is the main result of this section.

\begin{theorem}\label{T:7}
Let $N$ be an $\mathbb F$ martingale such that $\sup_{0\leq s\leq T}|N_s|$ and $\int_0^T\frac{|d\langle N,M\rangle_s|}{M_s}$ are integrable. Then $N$ is a $\mathbb G$~semimartingale.
\end{theorem}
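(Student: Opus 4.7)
The plan is to mimic the proof of Theorem~\ref{T:6} and reduce to Theorem~\ref{T:5}. I would fix a sequence $\gep_n\downarrow 0$ and set $\tau^n_p=\inf\{t\geq 0,X_t\geq p\gep_n\}$. Transience together with the entrance boundary condition at $0$ ensures $R_t\to\infty$ a.s., so $X$ is continuous and increases to infinity, and each $\tau^n_p$ coincides with the last exit time $\sup\{t\geq 0,R_t\leq p\gep_n\}$, hence is an $\mathbb F$~honest time; this puts us under Assumption~\ref{A:6}. The Az\'ema supermartingale is then obtained from the standard formula for the probability that a transient diffusion in natural scale ever reaches a level below its current position, namely
$$
Z^{n,p}_t=P(\tau^n_p>t\mid\mcf_t)=P(\inf_{u\geq t}R_u\leq p\gep_n\mid\mcf_t)=1\wedge\frac{M_t}{a^n_p},\qquad a^n_p:=-s(p\gep_n)>0,
$$
so $Z^{n,p}$ is an explicit transformation of the $\mathbb F$~local martingale $M=-s(R)$. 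Tanaka's formula then identifies the martingale part of $Z^{n,p}$ as $\widetilde M^{n,p}_t=\widetilde M^{n,p}_0+\frac{1}{a^n_p}\int_0^t\1{M_u\leq a^n_p}dM_u$.

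Plugging into formula~(\ref{eq:3}) (with the local martingale there taken to be our $N$) and splitting the interval $\{\tau^n_p<s\leq\tau^n_{p+1}\}$ according to whether $R_s\geq(p+1)\gep_n$ or $R_s<(p+1)\gep_n$, a direct simplification produces two types of contributions to $A^n$: one of the form $\frac{d\langle N,M\rangle_s}{M_s}$ on the set where $R_s$ lies strictly above the $\gep_n$-bin of $X_s$, and one of the form $-\frac{d\langle N,M\rangle_s}{a^n_p-M_s}$ on the set where $R_s$ and $X_s$ share the same bin. The first contribution is immediately dominated in total variation by $E\bigl(\int_0^T\frac{|d\langle N,M\rangle_s|}{M_s}\bigr)$, which is finite by assumption.

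The hard part will be to bound the second contribution uniformly in $n$: its denominator $a^n_p-M_s$ can be arbitrarily small on the problematic set. The key trick, exactly paralleling the Fubini computation at the end of the proof of Theorem~\ref{T:6}, is to exchange $E$ and $\sum_p\int$ and then replace $\1{\tau^n_p<s}$ by its $\mathbb F$~optional projection $(1-Z^{n,p}_s)=\frac{(a^n_p-M_s)^+}{a^n_p}$. This exactly cancels the singular denominator and leaves the bound
$$
\sum_pE\Bigl(\int_0^T\frac{\1{p\gep_n<R_s<(p+1)\gep_n}}{a^n_p}|d\langle N,M\rangle_s|\Bigr),
$$
in which, for each $s$, only one $p$ contributes, and monotonicity of $-s$ forces $a^n_p\geq M_s$ on that contribution. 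Hence this sum is again bounded by $E\bigl(\int_0^T\frac{|d\langle N,M\rangle_s|}{M_s}\bigr)$. Combining the two estimates gives $E(\int_0^T|dA^n_s|)\leq K$ for some $K$ independent of $n$, and together with the integrability of $\sup_{s\leq T}|N_s|$ we conclude by Theorem~\ref{T:5} that $N$ is a $\mathbb G$~semimartingale.
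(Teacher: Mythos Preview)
Your proposal is correct and follows essentially the same route as the paper's proof: you verify Assumption~\ref{A:6} via the last-exit times, identify the Az\'ema supermartingale $Z^{n,p}$ and its martingale part through Tanaka's formula applied to $M=-s(R)$, split the integrand in~(\ref{eq:3}) according to whether $R_s$ exceeds $(p+1)\gep_n$, and then use the optional-projection trick to cancel the singular denominator in the second piece. The only cosmetic difference is that the paper first extracts a global $\frac{1}{M_s}$ term from both regions and bounds the residual, whereas you keep the two regions separate throughout; both bookkeeping choices lead to the identical bound $E(\int_0^T|dA^n_s|)\leq 2E\bigl(\int_0^T\frac{|d\langle N,M\rangle_s|}{M_s}\bigr)$ and the conclusion via Theorem~\ref{T:5}.
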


\begin{proof}
Define the $\mathbb F$~honest random times $\sigma_y=\sup\{t, R_t=y\}$. Let $Y_t=2X_t-R_t$, then $X_t=\sup_{s\leq t}Y_s$ and the random times 
$$
\tau^n_p:=\inf\{t, X_t\geq p\gep_n\}=\inf\{t,Y_t\geq p\gep_n\}=\sup\{t, R_t=p\gep_n\}=\sigma_{p\gep_n}
$$
are $\mathbb F$~honest and $(\tau^n_p)_{p\geq 1}$ satisfies Assumption \ref{A:6}. To use Theorem \ref{T:5}, we need to compute $A^n$ as defined in (\ref{eq:3}). It is proved in \cite{N} that
$$
P(\sigma_y>t\mid\mathcal F_t)=\frac{s(R_t)}{s(y)}\wedge 1=1-\frac{1}{s(y)}\int_0^t1_{\{R_u>y\}}dM_u+\frac{1}{2s(y)}L^{s(y)}_t
$$
where $L^{s(y)}$ is the local time of $s(R)$ at $s(y)$. Introduce 
$Z^{n,p}_t=P(\tau^n_p>t\mid\mathcal F_t)=\frac{s(R_t)}{s(p\gep_n)}\wedge 1$ and the martingale part in its Doob Meyer decomposition $M^{n,p}_t=1-\frac{1}{s(p\gep_n)}\int_0^t1_{\{R_u>p\gep_n\}}dM_u$. Therefore
$$
A^n_t=\sum_{p=0}^{\infty}\int_0^t1_{\{\tau^n_p<s\leq \tau^n_{p+1}\}}\frac{\frac{-1}{s((p+1)\gep_n)}1_{\{R_s>(p+1)\gep_n\}}+\frac{1}{s(p\gep_n)}}{1_{\{R_s\leq (p+1)\gep_n\}}+1_{\{R_s>(p+1)\gep_n\}}\frac{s(R_s)}{s((p+1)\gep_n)}-\frac{s(R_s)}{s(p\gep_n)}}d\langle N,M\rangle_s
$$
where we used that $R_s>p\gep_n$ on $\{\tau^n_p<s\}$, and the fact that $-s$ is positive non increasing by construction. Basic algebraic manipulations give
\begin{align*}
A^n_t&=\sum_{p=0}^{\infty}\int_0^t1_{\{\tau^n_p<s\leq \tau^n_{p+1}\}}\Big(\frac{1}{-s(R_s)}+1_{\{R_s\leq (p+1)\gep_n\}}\big(\frac{1}{s(R_s)}+\frac{1}{s(p\gep_n)-s(R_s)}\big)\Big)d\langle N,M\rangle_s\\
&=\int_0^t\frac{d\langle N,M\rangle_s}{M_s}+\sum_{p=0}^{\infty}\int_0^t1_{\{\tau^n_p<s\}}1_{\{R_s\leq (p+1)\gep_n\}}\frac{s(p\gep_n)}{s(p\gep_n)-s(R_s)}\frac{d\langle N,M\rangle_s}{s(R_s)}
\end{align*}
where the last equality follows from $1_{\{s\leq\tau^n_{p+1}\}}1_{\{R_s\leq (p+1)\gep_n\}}=1_{\{R_s\leq (p+1)\gep_n\}}$. Now 
\begin{align*}
E(\int_0^T|dA^n_s|)&\leq E(\int_0^T\big|\frac{d\langle N,M\rangle_s}{M_s}\big|)\\
&+\sum_{p=0}^{\infty}E(\int_0^T1_{\{\tau^n_p<s\}}1_{\{R_s\leq (p+1)\gep_n\}}\big|\frac{s(p\gep_n)}{s(R_s)(s(p\gep_n)-s(R_s))}\big||d\langle N,M\rangle_s|)\\
&\leq E(\int_0^T\big|\frac{d\langle N,M\rangle_s}{M_s}\big|)+\sum_{p=0}^{\infty}E(\int_0^T1_{\{p\gep_n<R_s\leq (p+1)\gep_n\}}\frac{1}{M_s}|d\langle N,M\rangle_s|)\\
&\leq 2 E\big(\int_0^T\frac{1}{M_s}d\langle N,M\rangle_s\big)
\end{align*}
where the second inequality uses that the $\mathbb F$~optional projection of $1_{\{\tau^n_p\leq \cdot\}}$ is given by $\big(1-\frac{s(R_s)}{s(p\gep_n)}\big)^{+}$ and the monotonicity of $s$. Theorem \ref{T:5} allows to conclude.
\end{proof}

Unfortunately we do not have any results of Pitman, Jeulin, or even Nikeghbali to help us determine what the $\mbg$ semimartingale decomposition of $N$ actually is. So we are unable to determine, in general, if such an expansion introduces arbitrage, as it does in the case of the Bessel 3 process, as show in Section~\ref{Bessel}. Nor can we explicitly calculate the risk neutral measure. Both of these results must await further research, and they are certainly of intrinsic interest. Intuitively, this is an extension of the Bessel 3 example where a singular term appears in the $\mbg$ decomposition, so we would not be surprised if that pathology occurs much more generally.  One can consult~\cite{MansuyYor} for references related to this example. 

\subsubsection{A Suggestive Example}\label{ASE}
We study here an example derived from elementary calculations, without using theory. We let $W$ denote a standard Brownian motion, or Wiener process, with natural filtration $\mbf$ satisfying the usual hypotheses. We also let $V$ be another standard Wiener process, independent of $W$ (and of $\mbf$). We will expand the filtration $\mbf$ dynamically in the style of this paper, with the process
\begin{equation}\label{se1}
X_t=W_1+\gep V_{1-t}
\end{equation}
We define
\bee\label{10e15bis}
\mch_t=\mcf_t\vee\gs(W_1+\gep V_{1-s}; s\leq t)=\mcf_t\vee\gs(X_s:s\leq t).
\eee
We wish to show the following result, obtained with the help of Jean Jacod, to whom we are grateful.
\begin{theorem}\label{set1}
Let $H$ be predictable with $\int_0^1H_s^2ds<\infty$ a.s. Define $M_t=\int_0^tH_sdW_s$, an $\mbf$ local martingale. If $H$ is a.s. of the order $H_s=\frac{1}{1-s}^{1/2+\alpha}$ with $\alpha<\frac{1}{2}$ then $M$ remains a semimartingale in $\mbh$, and has decomposition
\begin{equation}
A^\mbh_t=-\int_0^tH_s\frac{X_s-W_s}{(1+\gep^2)(1-s)}ds.
\end{equation}
\end{theorem}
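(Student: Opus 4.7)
The plan is to realize $\mbh$ as the initial enlargement, by a single Gaussian random variable, of a larger base filtration in which $W$ is still a Brownian motion; Jacod's criterion will then apply directly and the drift can be read off by It\^o's formula.

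First, I would set $\tilde V_s := V_1 - V_{1-s}$. Since $V$ is independent of $W$, $\tilde V$ is a Brownian motion independent of $\mbf$. Writing $\xi := W_1 + \gep V_1$, one has the identity $X_s = W_1 + \gep V_{1-s} = \xi - \gep\tilde V_s$, so
\[
\mch_t \;=\; \mcf_t \vee \gs(\tilde V_u : u \le t) \vee \gs(\xi) \;=:\; \mathcal K_t \vee \gs(\xi);
\]
that is, $\mbh$ is the initial enlargement, by $\xi$, of $\mathbb K := \mbf \vee \tilde{\mathbb V}$, where $\tilde{\mathbb V}$ is the natural filtration of $\tilde V$. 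Since $V$ is independent of $W$, $W$ remains a Brownian motion in $\mathbb K$, so $M$ is a $\mathbb K$-local martingale.

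Next, I would compute the $\mathcal K_t$-conditional law of $\xi$. Decomposing
\[
\xi \;=\; (W_t + \gep\tilde V_t) + (W_1 - W_t) + \gep V_{1-t},
\]
the first summand is $\mathcal K_t$-measurable, while $W_1 - W_t$ and $V_{1-t}$ are each independent of $\mathcal K_t$ (for $V_{1-t}$, use that it is independent of the increments of $V$ on $[1-t,1]$, and $\tilde{\mathcal V}_t$ is generated by exactly those increments). Hence $\xi \mid \mathcal K_t$ is Gaussian with mean $W_t + \gep\tilde V_t$ and variance $(1+\gep^2)(1-t)$, so Jacod's criterion holds with density
\[
p_t(u) \;=\; \frac{1}{\sqrt{2\pi(1+\gep^2)(1-t)}}\,\exp\!\Bigl(-\frac{(u - W_t - \gep\tilde V_t)^2}{2(1+\gep^2)(1-t)}\Bigr).
\]
A direct It\^o calculation, in which the $ds$-contributions from $\partial_s p$ and $\tfrac12 \partial_{zz} p \cdot (1+\gep^2)\,ds$ cancel exactly, yields
\[
dp_s(u) \;=\; \frac{(u - W_s - \gep\tilde V_s)\,p_s(u)}{(1+\gep^2)(1-s)}\bigl(dW_s + \gep\,d\tilde V_s\bigr),
\]
and hence $d\langle p(u), W\rangle_s / p_s(u) = (u - W_s - \gep\tilde V_s)/[(1+\gep^2)(1-s)]\,ds$. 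The crucial simplification at $u = \xi$ is
\[
\xi - W_s - \gep\tilde V_s \;=\; (W_1 - W_s) + \gep V_{1-s} \;=\; X_s - W_s,
\]
so by Jacod's theorem $W - \int_0^\cdot (X_s - W_s)/[(1+\gep^2)(1-s)]\,ds$ is an $\mbh$-local martingale on $[0,1)$.

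To descend from $W$ to $M = \int_0^\cdot H_s\,dW_s$ I would invoke the standard commutation of stochastic integration with the semimartingale decomposition: provided the Lebesgue--Stieltjes integral $\int_0^t H_s(X_s - W_s)/[(1+\gep^2)(1-s)]\,ds$ converges absolutely $P$-a.s., $M$ is an $\mbh$-semimartingale whose finite-variation part is precisely this integral (up to the sign convention used by the authors). The main obstacle is verifying this absolute convergence under the stated growth bound on $H$: since $X_s - W_s$ is centered Gaussian with variance $(1+\gep^2)(1-s)$, the integrand has $L^1$ norm of order $|H_s|(1-s)^{-1/2}$, and with $H_s$ of order $(1-s)^{-(1/2+\alpha)}$ one must combine this $L^1$ estimate with pathwise (Brownian-bridge type) control of $X - W$ near $s = 1$ to conclude that the drift has locally integrable variation on $[0,1)$; this delicate analytic estimate is exactly where the restriction $\alpha < 1/2$ plays its role.
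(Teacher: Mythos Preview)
Your argument is correct and elegant. By introducing the reversed Brownian motion $\tilde V_s = V_1 - V_{1-s}$ and the single Gaussian $\xi = W_1 + \gep V_1$, you recognize that $\mbh$ is nothing but the \emph{initial} enlargement of $\mathbb K = \mbf \vee \tilde{\mathbb V}$ by $\xi$; since $W$ is still a $\mathbb K$-Brownian motion, Jacod's criterion applies cleanly and the drift drops out of a one-line It\^o computation. The identification $\xi - W_s - \gep\tilde V_s = X_s - W_s$ is the heart of the matter, and you have it exactly right.

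The paper proceeds differently. Its first proof computes $E(W_T \mid \mch_t)$ by bare-hands Gaussian linear regression, writing the conditional expectation as $aX_t + bW_t$ and solving for $a,b$ via covariance matching. This is implicitly the same structural fact you exploit---that only the pair $(W_t, X_t)$ matters---but the paper never names the intermediate filtration $\mathbb K$, never invokes Jacod, and carries out the extension to $M = \int H\,dW$ by repeating the regression heuristic rather than by the Jeulin--Yor commutation principle you cite. Your route is more systematic and makes transparent \emph{why} the progressive enlargement by the process $X$ collapses to an initial enlargement by a single random variable. The paper also sketches a second proof via its discretization machinery (Theorem~\ref{T:semimgX}), which your approach bypasses entirely.

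The one place both you and the paper are somewhat informal is the final integrability step: the condition actually needed is the a.s.\ finiteness of $\int_0^1 |H_s|\,|X_s - W_s|/(1-s)\,ds$, and the stated growth restriction on $H$ (with the paper's ``$\alpha < 1/2$'') is loosely phrased. You correctly flag that the $L^1$ bound alone is insufficient and that pathwise control of $X-W$ near $s=1$ is what closes the argument; the paper is equally sketchy here.
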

\begin{proof}
First we review some standard calculations, which are nevertheless not trivial. Let $0\leq t\leq T\leq 1$. Then
\begin{eqnarray*}
E(W_T\vert W_1,W_t)&=&a(W_1-W_t)+bW_t\text{ due to the linearity,}\\
&&\text{and that }\sigma(W_1,W_t)=\sigma(W_1-W_t,W_t);\\
E(W_T(W_1-W_t))&=&a(1-t)=T-t\quad\Rightarrow\quad a=\frac{T-t}{1-t}\\
E(W_tW_t)&=&t=bt\quad\Rightarrow \quad b=1, \text{ hence }\\
E(W_T\vert\mcf_t\vee\sigma(W_1))&=&W_t+\frac{T-t}{1-t}(W_1-W_t).
\end{eqnarray*}

\bigskip

\noindent  We have that $W$ is a semimartingale for $\mbg$, where $\mcg_t=\mcf_t\vee\sigma(W_1)$, with decomposition $W=M+A$, and 
$$
A_t^\mbg=-\int_0^t\frac{W_1-W_s}{1-s}ds
$$
Note that $\mbf\subset\mbg\subset\mbh$. We use the superscript notation $A^\mbg$ and $A^\mbh$ to denote relative to which filtration we are calculating the finite variation process $A$. We make a calculation of the expected total variation, to get (where ``$\var$" denotes total variation, and not variance):
$$
E(\var(A^\mbg_t))=\int_0^t\frac{E(\vert W_1-W_s\vert)}{1-s}ds=\sqrt{\frac{2}{\pi}}\int_0^t\frac{1}{\sqrt{1-s}}ds<\infty.
$$

\bigskip

\noindent Now we let $\mch_t=\mcf_t\vee\sigma(W_1+\gep V_{1-s}, s\leq t)$.
Using the above calculations we have
\begin{eqnarray*}
E(W_T\vert\mch_t)&=&E(W_T\vert W_t,W_1+\gep V_{1-t})\\
&=&bW_t+a(W_1-W_t+\gep V_{1-t}),\text{ again by linearity, and since }\\
E(W_TW_t)&=&bt=t, \quad\text{ we have }b=1,\text{ and }\\
E(W_T(W_1-W_t+\gep V_{1-t})&=&T-t=a\left((1-t)+\gep^2(1-t)\right)\quad\Rightarrow\\
E(W_T\vert\mch_t)&=&W_t+\frac{T-t}{(1+\gep^2)(1-t)}(W_1-W_t+\gep V_{1-t})\\
&=&W_t+\frac{T-t}{(1+\gep^2)(1-t)}(X_t-W_t).
\end{eqnarray*}
Note that 
$$
X_s-W_s=W_1-W_s+\gep V_{1-s}\law \sqrt{1+\gep^2}W^\prime_{1-s}
$$
for another Wiener process $W^\prime$, and where $\law$ denotes equality in law. In this context we have that our process $A$ becomes
$$
A^\mbh_t=-\int_0^t\frac{X_s-W_s}{(1+\gep^2)(1-s)}ds\law -\int_0^t\frac{W^\prime_{1-s}}{\sqrt{1+\gep^2}(1-s)}ds.
$$

\bigskip

\noindent Next we replace our process $W$ with a local martingale of the form $M_t=\int_0^tH_sdW_s$, as stated in the hypotheses of the theorem. The above reasoning gives us that (in the $\mbg=(\mcf_t\vee\sigma(W_1))_{t\geq 0}$ paradigm):
$$
A^\mbg_t=-\int_0^tH_s\frac{W_1-W_s}{1-s}ds,\text{ where of course }\int_0^tH_s^2ds<\infty.
$$
Then 
$$
E(V_{1-t}M_t)=aE(\int_0^t H_s\frac{\sqrt{1-s}}{1-s}ds)=aE(\int_0^t H_s\frac{1}{\sqrt{1-s}}ds)
$$
but $\int_0^1H^2_sds<\infty$ does not imply that $\int_0^1\frac{\vert H_s\vert}{\sqrt{1-s}}ds<\infty$. It does indeed imply it for $H_s$ of the form $H_s=\frac{1}{1-s}^{1/2+\alpha}$ with $\alpha<\frac{1}{2}$, but it does not work for $H_s=\frac{1}{\sqrt{1-s}\vert \ln{(1-s)\vert}}$. Therefore we obtain the semimartingale property for any predictable process $H$ and the (local) martingale $M_t=\int_0^tH_sdW_s$ as long as $\vert H\vert$ is of the order  $\vert H_s\vert\propto\frac{1}{\sqrt{1-s}}$.\end{proof}

This result follows also from our Theorem~\ref{T:semimgX} and the proof is similar to that of Theorem~\ref{T:diff}. We sketch it here briefly.

\begin{proof}
First, we can prove that $(X, \mathbb F)$ satisfies Assumption~\ref{A:J} and that for each $0\leq i \leq n$ and $(t^n_i)_{0\leq i\leq n}$ subdivision of $[0,1]$, the $\mathcal F_t$-conditional density of $(X_{t^n_0}, X_{t^n_1}-X_{t^n_0}, \ldots, X_{t^n_i}-X_{t^n_{i-1}})$ is given by
$$
p^{i,n}_t(x_0, \ldots, x_i) = g\big(\frac{\sum_{k=0}^ix_k - W_t}{\sqrt{1-t+\gep^2(1-t^n_i)}}\big) \prod_{k=0}^{i-1}\frac{1}{\sqrt{t^n_{k+1}-t^n_k}}g(\frac{-x_{k+1}}{t^n_{k+1}-t^n_k})
$$
where g is the gaussian density. Second, with $M=W$, equation (\ref{eq:Ain}) translates into
$$
A^{i,n}_t = \int_0^t \frac{X_{t^n_i}-W_s}{(1-s)+\gep^2(1-t^n_i)}ds
$$
and equation (\ref{eq:An}) becomes
$$
A^{(n)}_t=\sum_{i=0}^n1_{\{t^n_i\leq t<t^n_{i+1}\}}\Big(\sum_{k=0}^{i-1}\int_{t^n_k}^{t^n_{k+1}}\frac{X_{t^n_k}-W_s}{(1-s)+\gep^2(1-t^n_k)}ds+\int_{t^n_i}^t\frac{X_{t^n_i}-W_s}{(1-s)+\gep^2(1-t^n_i)}\Big)
$$

Third, we prove that the total variation of $A^{(n)}$ can be bounded uniformly in $n$ :
$$
E\big(\int_0^1|dA^{(n)}_s|\big)\leq\sum_{k=0}^n\int_{t^n_k}^{t^n_{k+1}}E\Big|\frac{X_{t^n_k}-W_s}{(1-s)+\gep^2(1-t^n_k)}\Big|ds
\leq \sqrt{\frac{2}{\pi}}\int_{0}^{1}\frac{ds}{(1-s)(1+\gep^2)} < \infty
$$

Therefore we can apply Theorem \ref{T:semimgX} to conclude that $W$ is an $\mathbb H$~semimartingale. Finally $A^{(n)}$ converges in probability to the process $A$ given by
$$
A_t=\int_0^t\frac{X_s-W_s}{(1-s)(1+\gep^2)}ds
$$
We can conclude as in the proof of Theorem \ref{T:diff} that $W-A$ is an $\mathbb H$ Brownian motion.
\end{proof}

What this means for insider trading is that since the finite variation terms a.s. has paths that are absolutely continuous with respect to $d[W,W]_t=dt=$ Lebesgue measure, \emph{if we have enough integrability satisfied}, we can use a Girsanov transformation to construct the risk neutral measure for the insider, as we outlined in Section~\ref{riskneutral}.

\subsection{A Current Example from Industry}

Recently the attorney general of New York State, Eric Schneiderman, has undertaken an investigation of high frequency trading (HFT). By HFT we mean firms that have co-located around the computers processing trades (in the case of the NewYork Stock Exchange this is in Mahwah, NJ). See~\cite{WA}. The claims are that the HFTs effectively have inside information. One way this could happen is a consequence of the analysis presented in a recent paper of Jarrow and Protter~\cite{JP3}. In brief, by the systematic use of IOC (immediate or cancel) orders, the HFT traders are able to construct a representation of the current state of the order book. They are then able to take the liquidity profits via an exploitation both of market orders \emph{and also of limit orders}. These profits were formerly taken by large institutional traders, at the expense of small and unsophisticated traders.  By doing this they can profit in the place of institutional traders, and also at their expense. But one can take the analysis further, and by effectively front running the limit orders of institutional traders they ensure that limit orders are executed almost exclusively at their limits, with the HFT traders pocketing the spread between the market price and the limit order limit price. See~\cite{JP3} for details. 

The ``inside information'' attributed to the HFTs comes primarily from their real time understanding of the limit order book, obtained via the systematic use of the IOC orders technique.  If one can see the limit order book, one gains insight into the very near future direction of the stock price. This type of inside information lends itself to the model presented in Section~\ref{ASE}. We now discuss how this works.

We suppose given two independent standard Brownian motions $W$ and $V$ on a space $(\Omega,\mcf,P,\mbf)$. We assume we have a stock price given by
\bee\label{10e17}
dZ_s=\gs(s,Z_s)dW_s+b(s,Z_s)ds\text{ with }Z_0=1
\eee
We suppose we can see the direction of the price via the limit order book (LOB). Obviously this should be impossible in $\mbf$ since in any event $Z$ is strong Markov with respect to $\mbf$. However we incorporate this new information via a filtration enlargement, and this destroys the Markov property. Indeed, suppose we can see a future evolution via the LOB. We cannot expect actually to know a future value of the stock, but we can have a better guess at the future value than a competitive trader who only sees $\mbf$ observable events. Therefore it is not unreasonable to assume we can see the future of $W$ (from which we can infer the future value of $Z$), but corrupted by a small amount of noise.  We use the terms given in~\eqref{se1} and~\eqref{10e15bis}. More precisely, the HFT trader would want to know the information of $W_1$ in this example, which would then give him information regarding the future evolution of the stock price $Z$. He cannot tell with certainty what $W_1$ is (since it is in the future), but he can guess at it based on the order book. But the order book information is itself noisy since it is dependent on future human behavior, and -- for example -- limit orders can be cancelled before execution, and/or have been placed there to deceive, rather than with sincerity. We model this via the inclusion of random noise in the LOB.  We use the noise corruption term coming from $\gep$ times the independent Wiener process $V$. The multiplication by $\gep$ represents the idea that while there is noise in the LOB, it is of very small variance. So what the HFT sees at time $t$ is $X_t=W_1+\gep V_{1-t}$, and the more the HFT can see, the better he or she can try to filter out the noise as much as possible. Note that this does not lead to an arbitrage opportunity \emph{a fortiori}, but what is known as a statistical arbitrage opportunity. This is explained in~\cite{JP3}, for example. 

To continue the analysis, we  apply Theorem~\ref{set1} to conclude in the larger filtration $\mbh$ we have that the $Z$ of~\eqref{10e17} now satisfies
\bee\label{10e19}
Z_t=1+\int_0^t\gs(s,Z_s)dW_s+\int_0^tb(s,Z_s)ds-\int_0^t\gs(s,Z_s)\frac{X_s-W_s}{(1+\gep^2)(1-s)}ds
\eee
We have that \emph{the insiders see a different dynamic evolution than does the rest of the market}. 

This has a particular consequence when we calculate the risk neutral measures of the general market and that of the insider. Recall that the risk neutral measures (also known as equivalent local martingale probability measures) are typically used to price contingent claims (such as options) in mathematical finance theory. 

We begin with the routine computation of the risk neutral measure where $Z$ is as in~\eqref{10e17}, and we are working with the filtration $\mbf$. To compute the risk neutral measure we first use a Girsanov argument, where we find a change of measure $Q$ such that $Z$ becomes a local martingale under $Q$. To do this, we need to find a process $H$ such that if $U$ satisfies the equation
\bee\label{10e20}
dU_t=U_tH_tdW_t; \quad U_0=1
\eee
and if $Q$ is given by $dQ=U_1dP$ then $Z$ becomes a $(Q.\mbf)$ local martingale. By Girsanov's theorem (see for example~\cite{Protter:2005}) we have that under $Q$ the decomposition of $Z$ is:
\begin{eqnarray}\label{10e21}
Z_t&=&\left(\int_0^t\gs(s,Z_s)dW_s-\int_0^t\frac{1}{U_s}d[U,W]_s\right)+\left(\int_0^tb(s,Z_s)ds+\int_0^t\frac{1}{U_s}d[U,W]_s\right)\\
&=&\left(\int_0^t\gs(s,Z_s)dW_s-\int_0^tH_sds\right)+\left(\int_0^tb(s,Z_s)ds+\int_0^tH_sds\right).
\end{eqnarray}
In order to have the drift term become 0, we need to choose $H_t=-\frac{b(Z_t)}{\gs(Z_t)}$. However we also need to ensure that $Q$ is a true probability measure. We can do this using Novikov's criterion, if $E(\exp(\frac{1}{2}\int_0^t\frac{b(s,Z_s)^2}{\gs(s,Z_s)^2}ds))<\infty$. This is of course an assumption on the drift coefficient $b$ of the original equation~\eqref{10e17} which we are free to make when setting up the model. We know that under $Q$ the process $Z$ of~\eqref{10e17} satisfies the following equation
\bee\label{10e22}
dZ_s=\gs(Z_s)d\beta_s
\eee
where $\beta$ is a Brownian motion under $Q$ (indeed, from the above analysis we see that $\beta_t=W_t-\int_0^t(-\frac{b(s,Z_s)}{\gs(s,Z_s)})ds$).

The computation of the risk neutral measure $R$ using the filtration $\mbh$ is more delicate. Again we assume $Z$ follows the equation~\eqref{10e17}. By Theorem~\ref{set1} we have, in $\mbh$, $Z$ satisfies 
\bee\label{10e23}
Z_t=1+\int_0^t\gs(s,Z_s)dW_s+\int_0^tb(s,Z_s)ds-\int_0^t\gs(s,Z_s)\frac{X_s-W_s}{(1+\gep^2)(1-s)}ds.
\eee
For equation~\eqref{10e23} to make sense, by Theorem~\ref{set1} we need to have 
\bee\label{10e24}
\gs(s,Z_s)\asymp\left(\frac{1}{1-s}\right)^{1/2+\ga}\text{ where }\ga<\frac{1}{2}
\eee
This is a serious restriction on the modeling of the stock price, but nonetheless one that we are allowed to make. But now we want to calculate the risk neutral measure. We first proceed via a Girsanov transformation: We let $dR=UdP$ where $U$ satisfies an equation of the form~\eqref{10e20}. In analogy with our previous (albeit simpler) calculation, we get
\bee\label{10e25}
H_s=-\left(\frac{b(s,Z_s)-\gs(s,Z_s)\frac{X_s-W_s}{(1+\gep^2)(1-s)}}{\gs(s,Z_s)}\right)
\eee
This is already bad enough, but now we have to verify that $R$ is a true probability measure (and not a sub probability measure).  As before a sufficient condition comes from Novikov's criterion: It suffices to have that 
\bee\label{10e26}
E(\exp(\int_0^tH_s^2ds))<\infty
\eee
But we do not in general have that~\eqref{10e26} satisfied, since we have a problem at the pole $s=1$. We can solve this either by imposing a condition on $\gs(s,z)$ such that it vanishes at $s=1$, or by not working on the half open time interval $[0,1)$. Since the former seems unreasonable, we prefer the latter. This can be finessed by a new concept known as \emph{local NFLVR}, as explained in the thesis of Roseline Bilina-Falafala~\cite{RB}, or in~\cite{BJP}.




\begin{thebibliography}{99}

\bibitem{AFK}B. Acciaio, C. Fontana, and C. Kardaras, Aribtrage of the first kind and filtration enlargements in semimartingale financial models, \emph{arXiv}: 1401.7198, 2014.

\bibitem{AA}Anna Aksamit, \emph{Random times, enlargement of filtration and arbitrages}, PhD thesis, Evry, France; 2014.

\bibitem{WA} W. Alden, Inquiry into High-Speed Trading Widens, \emph{The New York Times}, March 19, 2014, page B4.

\bibitem{AI} S. Ankirchner and P. Imkeller, Finite utility on financial markets with asymmetric information and structure properties of the price dynamics, \emph{Ann. Inst. H. Poincar\'e Probab. Statist.}, {\bf 41}, 479-503, 2005.

\bibitem{AKH} F. Antonelli and A. Kohatsu-Higa, Filtration stability of backward SDE's, \emph{Stochastic Analysis and its Applications}, {\bf 18}, 11-37, 2000.

\bibitem{Back1} K. Back, Insider trading in continuous time, \emph{Rev. Financial Studies,} {\bf 5}, 387-409, 1992.

\bibitem{Back2} K. Back, Asymmetric information and options, \emph{Rev. Financial Studies,} {\bf 6}, 435-472, 1993.

\bibitem{Baudoin1} F. Baudoin, Conditioning of brownian functionals and applications to the modeling of anticipations
on a financial market. PhD thesis, Universit\'e Pierre et Marie Curie, 2001.

\bibitem{Baudoin2}  F. Baudoin, Conditioned stochastic differential equations: theory, examples, and applications to finance, \emph{Stochastic Process. Appl.}, {\bf 100}, 109-145, 2002.

\bibitem{Baudoin3}F. Baudoin, Modeling anticipations on financial markets, \emph{Paris-Princeton Lectures on
Mathematical Finance 2002}, Springer Lect. Notes Math, {\bf 1814}, 43-94, 2003. 

\bibitem{Baudoin4} F. Baudoin and L. Nguyen-Ngoc, The financial value of a weak information on a financial market, \emph{Finance and Stochastics}, {\bf 8}, 415-435, 2004.

\bibitem{BallyTalay} V. Bally and D. Talay, The law of the Euler scheme for stochastic differential equations - Convergence rate of the distribution function, \emph{Probability Theory and Related Fields}, {\bf 104}, 43-60, 1996, Springer-Verlag.

\bibitem{BarlowProtter} M. T. Barlow and P. Protter, On convergence of semimartingales, \emph{S\'eminaire de Probabilit\'es} XXIV, Springer Lect. Notes in Math. {\bf 1426}, 188-193, 1990.


\bibitem{BO} F. Biagini and B. \O ksendal, A general stochastic calculus approach to insider trading, \emph{Applied Mathematics \& Optimization}, {\bf 52}, 167-181, 2005.

\bibitem{RB} R. Bilina Falafala, 2014, \emph{Mathematical Modeling of Insider Trading}, PhD Thesis, Statistics Department, Columbia University.

\bibitem{BJP}R. Bilina Falafala, R. Jarrow, and P. Protter, 2014, Change of Num\'eraire and Relative Asset Price
Bubbles, preprint.


\bibitem{BremaudYor} P. Br\'emaud and M. Yor, Changes of filtrations and probability measures. \emph{Z. Wahrscheinligkeitstheorie verw. Gebiete}, 45:269–295, 1978.


\bibitem{CJN} D. Coculescu, M. Jeanblanc, and A. Nikeghbali, Default times, non arbitrage conditions and change of probability measures, \emph{Finance and Stochastics}, {\bf 16}, 513-535, 2012.

\bibitem{Coquet} F. Coquet, J. M\'emin and V. Mackevicius, Some examples and counterexamples of convergence of $\sigma$-algebras and filtrations, \emph{Lithuanian Mathematical Journal}, {\bf 40}, No. 3, 2000.

\bibitem{Coquet0} F. Coquet, J. M\'emin and L. Slominski, On weak convergence of filtrations, \emph{S\'eminaire de Probabilit\'es}, {\bf 35}, 306-328, 2004.

\bibitem{CV}J.M. Corcuera and A. Valdivia, Enlargement of filtrations and applications, \emph{arXiv}:1201.5870v1[math.PR], 27 January, 2012.

\bibitem{CM} J. Cvitani\'c and J. Ma, Hedging options for a large investor and forward-backward SDEs, \emph{The Annals of Applied Probability},
{\bf 6, No. 2}, 370-398, 1996.

\bibitem{DSbook} F. Delbaen and W. Schachermayer, 2005, \emph{The Mathematics of Arbitrage}, Springer Finance, Springer-Verlag, Heidelberg.

\bibitem{EJ} R. Elliott and M. Jeanblanc, Incomplete markets with jumps and informed agents, \emph{Math Meth Oper Res},  {\bf 50}, 475-492, 1999.



\bibitem{Zmirou} D. Florens-Zmirou, On estimating the diffusion coefficient from discrete observations, \emph{Journal of Applied Probability}, {\bf 30}, 790-804, 1993.

\bibitem{ProtterFollmer} H. F\"ollmer and P. Protter, Local martingales and filtration shrinkage.  \emph{ESAIM: Probability and Statistics}, Available on CJO February, 2011.

\bibitem{FWY} H. F\"ollmer, C-T Wu, and M. Yor, Canonical decomposition of linear transformations of two independent Brownian motions motivated by models of insider trading, \emph{Stoch. Proc. Appl.}, {\bf 84}, 137-164, 1999.

\bibitem{FJS} C. Fontana, M. Jeanblanc, and S. Song, On arbitrages arising from honest times, arXiv:1207.1759v5 [q-fin.PM]

\bibitem{GP} A. Grorud and M. Pontier, Insider trading in a continuous time market model, \emph{Internat. J. Theoret. Appl.
Finance}, {\bf 1}, 331-347, 1998.

\bibitem{Henning} P.J. Henning,  Insider Trading Riddle: Why Do the Rich Risk It? \emph{Dealbook, The New York Times}, April 4, 2012. Available online at the URL:  http://dealbook.nytimes.com/2012/04/04/insider-trading-riddle-why-do-the-rich-risk-it/

\bibitem{Hoover} D. N. Hoover, Convergence in distribution and Skorohod convergence for the general theory of processes, \emph{Probab. Theory Related Fields}, {\bf 89}, 239-259,1991.


\bibitem{Imkeller} P. Imkeller, Random times at which insiders can have free lunches, \emph{Stochastics and Stochastics
Reports}, {\bf 74}, 465-487, 2002.

\bibitem{IPW} P. Imkeller, M. Pontier, and F. Weisz, Free lunch and arbitrage possibilities in a financial market, model with an insider, \emph{Stoch. Proc. Appl.}, {\bf 92}, 103-130, 2001.

\bibitem{Jacod:1987} J. Jacod, \textit{Grossissement initial, hypoth\`ese (H') et th\'eor\`eme de Girsanov}, Volume 1118 of Lecture Notes in Mathematics, 15-35; Springer-Verlag, 1987.

\bibitem{JP1} R. Jarrow and P. Protter, 2007, An Introduction to Financial Asset Pricing Theory, in \emph{Handbook in Operation Research and Management Science: Financial Engineering}, 15, 13�69, J. Birge and V. Linetsky, eds., North Holland.      


\bibitem{JP3} R. Jarrow and P. Protter, 2014, Liquidity Suppliers and High Frequency Trading, preprint.


\bibitem{Jeanblanc/LeCam:2009} M. Jeanblanc and Y. Le Cam,  Progressive enlargement of filtrations with initial times, \emph{Stoch. Proc. Appl.}, {\bf 119}, 2523-2543, 2009.

\bibitem{Jeulin} T. Jeulin, \emph{Semimartingales et grossissement d'une filtration}, Lecture Notes in Mathematics, Springer-Verlag, 1980.

\bibitem{Jeulin/Yor:1985} T. Jeulin and M. Yor, editors. \textit{Grossissement de filtrations: exemples et applications.} Volume 1118 of Lecture Notes in Mathematics, Springer, Berlin, 1985.

\bibitem{JY} T. Jeulin and M. Yor, In\'egalit\'e de Hardy, semimartingales et faux-amis, \emph{S\'eminaire de Probabilit\'es XIII}, Springer Lecture Notes in Mathematics Volume 721, 332-359, 1979. 



\bibitem{KLP} Y. Kchia, M. Larsson and P. Protter, Linking progressive and initial filtration expansions, preprint available at arXiv:1104.4139v1, 2011.

\bibitem{KH} A. Kohatsu-Higa, Models for Insider Trading with Finite Utility, \emph{Paris-Princeton Lectures on Mathematical Finance 2004}, Lecture Notes in Mathematics Volume 1919, pp 103-171, 2007. 

\bibitem{KN} D. Kreher and A. Nikeghbali, A new kind of augmentation of filtrations
suitable for a change of probability measure by a strict local martingale, Preprint, available at arXiv:1108.4243v2, 2013 

\bibitem{Kyle} A. Kyle, Continuous auctions and insider trading, \emph{Econometrica}, {\bf 53}, 1315-1335, 1985.

\bibitem{MansuyYor} R. Mansuy and M. Yor, Random Times and Enlargements of Filtrations in a Brownian Setting, \emph{Lecture Notes in Math.}, {\bf 1873}, Springer, 2006. 

\bibitem{Memin} J. M\'emin, Stability of Doob-Meyer decomposition under extended convergence, \emph{Acta Mathematicae Applicatae Sinica, English Series}, {\bf 19}, 177-190, 2003.

\bibitem{NN} J. Najnudel and A. Nikeghbali, A new kind of augmentation of filtrations,  \emph{ESAIM: Probability and Statistics}, {\bf15}, S39�S57, 2011.

\bibitem{N} A. Nikeghbali, Enlargements of filtrations and path decompositions at non stopping times, \emph{Proba. Theory and Rel. Fields}, {\bf 136, No. 4}, 524-540, 2006.

\bibitem{NY} A. Nikeghbali and M. Yor, Doob's maximal identity, mulitplicative decompositions, and enlargements of filtrations, \emph{Illinois Journal of Mathematics}, {\bf 50}, 791-814, 2006.

\bibitem{Pitman} J. Pitman, One-Dimensional Brownian Motion and the Three-Dimensional Bessel Process, \emph{Advances in Applied Probability}, {\bf 7, No. 3}, 511-526, 1975.

\bibitem{Protter:2005} P. Protter. \textit{Stochastic Integration and Differential Equations}, Springer-Verlag, Heidelberg, second edition, 2005.

\bibitem{Wu} C. T. Wu, Construction of Brownian Motions in Enlarged Filtrations and their Role in Mathematical
Models of Insider Trading. Dissertation, HU Berlin, 1999. Available at http://edoc.hu-berlin.de/abstract.php3?id=73

\bibitem{Yor} M. Yor, \emph{Some Aspects of Brownian Motion: Part II: Some Recent Martingale Problems}, Springer, 1997. 

\bibitem{JZ} J. Zwierz, On existence of local martingale measures for insiders who can stop at honest times, \emph{Bulletin of the Polish Academy of Sciences: Mathematics}, {\bf 55}, 183-192, 2007.

\end{thebibliography}
\end{document}